\numberwithin{equation}{section}
\global\long\def\R{\mathbb{R}}%
\global\long\def\Rn{\mathbb{R}^{n}}%
\global\long\def\Rd{\mathbb{R}^{d}}%
\global\long\def\cP{\mathcal{P}}%
\global\long\def\cR{\mathcal{R}}%
\global\long\def\bQ{\boldsymbol{Q}}%
\global\long\def\cS{\mathcal{S}}%
\global\long\def\cE{\mathcal{E}}%
\global\long\def\rmD{\mathrm{D}}%
\global\long\def\fE{\mathfrak{E}}%
\global\long\def\bnu{\boldsymbol{\nu}}%
\global\long\def\norm#1{\left\Vert #1\right\Vert }%
\global\long\def\diver{\mathrm{div}}%
\global\long\def\d{\mathrm{d}}%
\global\long\def\diam{\mathrm{diam}}%
\global\long\def\e{\mathrm{{e}}}%
\global\long\def\jump#1{\lsem#1\rsem}%
\global\long\def\cL{\mathcal{L}}%
\global\long\def\cF{\mathcal{F}}%
\global\long\def\cT{\mathcal{T}}%
\global\long\def\S{\mathbb{S}}%
\newtheorem{thm}{Theorem}[section]
\newtheorem{cor}[thm]{Corollary}
\newtheorem{lem}[thm]{Lemma}
\newtheorem{prop}[thm]{Proposition}
\theoremstyle{definition}
\newtheorem{defn}[thm]{Definition}
\newtheorem{rem}[thm]{Remark}
\newtheorem{example}[thm]{Example}
\definecolor{rot}{rgb}{1.000,0.000,0.000}
\begin{document}
\author[M. Heida, M. Kantner, A. Stephan]{Martin Heida, Markus Kantner, Artur Stephan 
}
\address{Weierstrass Institute,
Mohrenstr. 39, 10117 Berlin, Germany }
\email{martin.heida@wias-berlin.de}
\email{markus.kantner@wias-berlin.de}
\email{artur.stephan@wias-berlin.de}
\title[Discretization for the Fokker--Planck operator]{Consistency and convergence for a family of finite volume discretizations
of the Fokker--Planck operator}	
\subjclass[2010]{35Q84,49M25,65N08}	
\keywords{Finite Volume, Fokker--Planck, Scharfetter--Gummel, Stolarsky mean, Consistency, Order of Convergence}
\thanks{M. H. and A. S. are financed by Deutsche Forschungsgemeinschaft (DFG) through Grant
CRC 1114
``Scaling Cascades in Complex Systems'', Project C05 {\em Effective models for
materials and interfaces with multiple scales}. The work of M. K. received funding from the DFG under
Germany's Excellence Strategy -- EXC2046: \textsc{Math}+ (Berlin Mathematics Research Center).
}	
\maketitle
\begin{abstract}
We introduce a family of various finite volume discretization schemes
for the Fokker--Planck operator, which are characterized by different
weight functions on the edges. This family particularly includes the
well-established Scharfetter--Gummel discretization as well as the
recently developed square-root approximation (SQRA) scheme. We motivate
this family of discretizations both from the numerical and the modeling
point of view and provide a uniform consistency and error analysis.
Our main results state that the convergence order primarily depends
on the quality of the mesh and in second place on the quality of the
weights. We show by numerical experiments that for small gradients
the choice of the optimal representative of the discretization family
is highly non-trivial while for large gradients the Scharfetter--Gummel
scheme stands out compared to the others.

\end{abstract}
\section{Introduction}

The Fokker--Planck equation (FPE), also known as \emph{Smoluchowski
equation} or \emph{Kolmogorov forward equation}, is one of the major
equations in theoretical physics and applied mathematics. It describes
the time evolution of the probability density function of a particle
in an external force field (e.g., fluctuating forces as in Brownian
motion). The equation can be generalized to other contexts and observables
and has been employed in a broad range of applications, including
physical chemistry, protein synthesis, plasma physics and semiconductor
device simulation. Thus, there is a huge interest in the development
of efficient and robust numerical methods. In the context of finite
volume (FV) methods, the central objective is a robust and accurate
discretization of the (particle or probability) flux implied by the
FPE.

A particularly important discretization scheme for the flux was derived
by Scharfetter and Gummel \cite{SchGum69LSADO} in the context of
the drift-diffusion model for electronic charge carrier transport
in bipolar semiconductor devices \cite{VanRoosbroeck1950}. The typically
exponentially varying carrier densities at p-n junctions lead to unphysical
results (spurious oscillations), if the flux is discretized in a naive
way using standard finite difference schemes \cite{Miller1994}. The
problem was overcome by considering the flux expression as a one-dimensional
boundary value problem along each edge between adjacent mesh nodes.
The resulting Scharfetter--Gummel (SG) scheme provides a robust discretization
of the flux as it asymptotically approaches the numerically stable
discretizations in the drift- (upwind scheme) and diffusion-dominated
(central finite difference scheme) limits. The SG-scheme and its several
generalizations to more complex physical problem settings are nowadays
widely used in semiconductor device simulation \cite{Markowich1986,Farrell2017b}
and have been extensively studied in the literature \cite{Brezzi1989,EyFuGa06FVSNPE,Farrell2017a,Kantner2020}.
The SG-scheme is also known as \emph{exponential fitting scheme} and
was independently discovered by Allan and Southwell \cite{Allan1955}
and Il'in \cite{Ilin1969} in different contexts.

Recently, an alternative flux discretization method, called \emph{square-root
approximation} (SQRA) scheme, has been derived explicitly for high
dimensional problems. The original derivation in \cite{LieFackWeb2013}
aims at applications in molecular dynamics and is based on Markov
state models. However, it can also be obtained from a maximum entropy
path principle \cite{dixit2015inferring} and from discretizing the
Jordan--Kinderlehrer--Otto variational formulation of the FPE \cite{Mielke2013Geodesic}.
In Section \ref{subsec:The-Wasserstein-gradient}, we provide a derivation
of SQRA scheme, which is motivated from the theory of gradient flows.
In contrast to the SG-scheme, the SQRA is very recent and only sparsely
investigated. The only contributions on the convergence seem to be
\cite{Mielke2013Geodesic} in 1D, \cite{DonHeiWebKel2017} (formally,
rectangular meshes) and \cite{heida2018convergences} using G-convergence.

The SG and the SQRA schemes both turn out to be special cases of a
family of discretization schemes based on weighted Stolarsky means,
see Section \ref{subsec:A-family-of-schemes}. This family is very
rich and allows for a general convergence and consistency analysis,
which we carry out in Sections \ref{sec:Comparison-of-discretization}--\ref{sec:Convergence-of-the}.
Interestingly, there seems to be no previous results in the literature.

\subsection{The FPE and the SG and SQRA discretization schemes}

In this work, we consider the stationary Fokker--Planck equation
\begin{equation}
-\nabla\cdot\left(\kappa\nabla u\right)-\nabla\cdot\left(\kappa u\nabla V\right)=f,\label{eq:Fokker-Planck-residual}
\end{equation}
which can be equivalently written as
\[
\mathop{\diver}\mathbf{J}(u,V)=f
\]
using the flux $\mathbf{J}(u,V)=-\kappa\left(\nabla u+u\nabla V\right)$,
where $\kappa>0$ is a (possibly space-dependent) diffusion coefficient
and $V:\Omega\rightarrow\R$ is a given potential.\textcolor{red}{}
The flux $\mathbf{J}$ consists of a diffusive part $\kappa\nabla u$
and a drift part $\kappa u\nabla V$, which compensate for the stationary
density $\pi=\e^{-V}$ (also known as the Boltzmann distribution)
as $\mathbf{J}(\e^{-V},V)=0$. This reflects the principle of detailed
balance in the thermodynamic equilibrium. The right-hand side $f$
describes possible sink or source terms.%

The SG and the SQRA schemes of the Fokker--Planck operator $\diver\,\mathbf{J}(u,V)$
that are considered below are given in the form
\begin{equation}
\left(\cF_{B}^{\cT}u\right)_{i}:=-\sum_{j:\,i\sim j}\frac{m_{ij}}{h_{ij}}\kappa_{ij}\left(B\left(V_{i},V_{j}\right)u_{j}-B\left(V_{j},V_{i}\right)u_{i}\right),\label{eq:FP-discr-Bernoulli}
\end{equation}
where $\sum_{j:\,j\sim i}$ indicates a sum over all cells adjacent
to the $i$-th cell of the mesh, $m_{ij}$ is the mass of the interface
between the $i$-th and $j$-th cell, $h_{ij}$ is the distance between
the corresponding nodes and $\kappa_{ij}$ is the discretized diffusion
coefficient $\kappa$. We are particularly interested in the two cases
\begin{align}
B\left(V_{i},V_{j}\right) & =B_{1}\left(V_{i}-V_{j}\right):=\frac{V_{i}-V_{j}}{\e^{V_{i}-V_{j}}-1}\label{eq:bernoulli-B}\\
\text{or}\quad B\left(V_{i},V_{j}\right) & =B_{2}\left(V_{i}-V_{j}\right):=\e^{-\frac{1}{2}\left(V_{i}-V_{j}\right)}\label{eq:SQRA-B}
\end{align}
with either the Bernoulli function $B_{1}$ (for SG) or with the SQRA-coefficient
$B_{2}$. The schemes are derived under the assumption of constant
flux, diffusion constant and potential gradient along the respective
edges.

In the pure diffusion regime, i.e., for $V_{i}-V_{j}\to0$, the Bernoulli
function provides $B_{1}\left(V_{i}-V_{j}\right)\rightarrow1$, such
that the SG scheme approaches a discrete analogue of the diffusive
part of the continuous flux: $J_{ij}=\kappa_{ij}(u_{i}-u_{j})/h_{ij}$.
In the drift-dominated regime, i.e., for $V_{j}-V_{i}\rightarrow\pm\infty$,
the asymptotics of $B_{1}$ recover the upwind scheme
\begin{equation}
J_{i,j}\to-\kappa_{i,j}\frac{V_{j}-V_{i}}{h_{i,j}}\begin{cases}
u_{j} & \text{if }V_{j}>V_{i}\\
u_{i} & \text{if }V_{j}<V_{i}
\end{cases},\label{eq:limit-B-SG}
\end{equation}
which is a robust discretization of the drift part of the flux, where
the density $u$ is evaluated in the donor cell of the flux. Hence,
the Bernoulli function $B_{1}$ interpolates between the appropriate
discretizations for the drift- and diffusion-dominated limits, which
is why the SG scheme is the preferred FV scheme for Fokker--Planck
type operators. Indeed, the SQRA scheme is consistent with the diffusive
limit, but is less accurate than the SG scheme in the case of strong gradients $\nabla V$.

\subsection{The Stolarsky mean approximation schemes \label{subsec:The-Stolarsky-mean}}

In this work, we investigate the relative $L^{2}$-distance between
the discrete SQRA and SG solutions on the same mesh and the order
of convergence of the SQRA scheme, which was an open problem. It turns
out that both methods are members of a broad family of finite volume
discretizations that stem from the weighted Stolarsky means
\[
S_{\alpha,\beta}\left(x,y\right)=\left(\frac{\beta\left(x^{\alpha}-y^{\alpha}\right)}{\alpha\left(x^{\beta}-y^{\beta}\right)}\right)^{\frac{1}{\alpha-\beta}},
\]
see Section~\ref{subsec:A-family-of-schemes}. We benefit from the
general structure of these schemes and prove order of convergence
on consistent meshes in the sense of the recent work \cite{di2018third}.
We will see that the error naturally splits into the consistency error
for the discretization of the Laplace operator plus an error which
is due to the discretization of the stationary solution $\pi$ and
the Stolarsky mean, see Theorem~\ref{thm:ConsistencyError}.

We will demonstrate below that the Stolarsky discretization schemes
for \eqref{eq:Fokker-Planck-residual} read
\begin{align}
-\sum_{j:\,j\sim i}\frac{m_{ij}}{h_{ij}}\kappa_{ij}S_{ij}\left(\frac{u_{j}}{\pi_{j}}-\frac{u_{i}}{\pi_{i}}\right) & =f_{i},\label{eq:Fokker-Planck-SQRA-residual}
\end{align}
where $\pi_{i}=\e^{-V_{i}}$ , $f_{i}=\int_{\Omega_{i}}f$ is the
integral of $f$ over the $i$-th cell and $S_{ij}=S_{\alpha,\beta}\left(\pi_{i},\pi_{j}\right)$
is a Stolarsky mean of $\pi_{i}$ and $\pi_{j}$. We sometimes refer
to the general form \eqref{eq:Fokker-Planck-SQRA-residual} as discrete
FPE.

The Stolarsky means $S_{\alpha,\beta}$ generalize Hölder means and
other $f$-means (see Table \ref{tab:Choices-for-S}). An interesting aspect of the above representation
is that all these schemes preserve positivity with the discrete linear
operator being an $M$-matrix. Furthermore, with the relative density
$U=u/\pi$ we arrive at
\[
-\sum_{j:\,j\sim i}\frac{m_{ij}}{h_{ij}}\kappa_{ij}S_{ij}\left(U_{j}-U_{i}\right)=f_{i},
\]
which is a discretization of the elliptic equation
\[
-\nabla\cdot\left(\kappa\pi\nabla U\right)=f,
\]
where the discrete Fokker--Planck operator becomes a purely diffusive
second order operator in $U$. Furthermore, if $\kappa$ is a symmetric
strictly positive definite uniformly elliptic matrix, this operator
is also symmetric strictly positive definite and uniformly elliptic.
In the latter setting, we can thus rule out the occurrence of spurious
oscillations in our discretization.%

Although we treat the Stolarsky means as an explicit example, note
that the main theorems also hold for other smooth means.

\subsection{Major contributions of this work}

Since we look at the FV discretization of the FPE from a very broad
point of view, we summarize our major findings.
\begin{itemize}
\item We provide a derivation of the general Stolarsky mean FV discretization
in Section~\ref{subsec:A-family-of-schemes}.
\item We discuss the gradient structure of the discretization schemes in
view of the natural gradient structure of the FPE in Section~\ref{subsec:The-Wasserstein-gradient}.
\item We provide order of convergence of the schemes as the fineness of
the discretization tends to zero. In particular we show
\begin{itemize}
\item that the order of convergence is mainly determined by two independent
parts: the consistency (Def.~\ref{def:consistency}) of the mesh
(Section~\ref{subsec:Error-Analysis-in-U}) and an error due to the
discretization of $\pi$ along the edge by the means $S_{\ast}(\pi_{i},\pi_{j})$.
\item that the order of convergence strongly depends on the constant $\alpha+\beta$,
where $\alpha$ and $\beta$ are the Stolarsky coefficients in $S_{ij}=S_{\alpha,\beta}\left(\pi_{i},\pi_{j}\right)$
(Corollary~\ref{cor:ComparisonDifferenceForDifferentStolarskyMean}).
\item that the SG coefficients are to be preferred in regions of strong
gradient $\nabla V$ (Section~\ref{subsec:Error-Analysis-in-u}).
\end{itemize}
\end{itemize}

\subsection{Outlook}

The results of this work suggest to search for ``optimal'' parameters
$\alpha$ and $\beta$ in the choice of the Stolarsky mean in order
to reduce the error of the approximation as much as possible. However,
from an analytical point of view, the quest for such optimal $\alpha$
and $\beta$ is quite challenging. Moreover, since the optimal choice
might vary locally, depending on the local properties of the potential
$V$, we suggest to implement a learning algorithm that provides suitable
parameters $\alpha$ and $\beta$ depending on the local structure
of $V$ and the mesh.

\subsection{Outline of this work}

After some preliminaries regarding notation and a priori estimates
in Section~\ref{sec:Preliminaries-and-Notation}, we will recall
the classical derivation of the SG scheme in Section~\ref{subsec:Motivation-of-SG}
and discuss its formal relation to SQRA. We will then provide a derivation
of SQRA from physical principles in Section~\ref{subsec:The-Wasserstein-gradient},
based on the Jordan--Kinderlehrer--Otto \cite{jordan1998variational}
formulation of the FPE. In Section \ref{subsec:A-family-of-schemes},
we show that SG and SQRA are elements of a huge family of discretization
schemes~\eqref{eq:Fokker-Planck-SQRA-residual}.

Section \ref{sec:Convergence-of-the} provides the error analysis
and estimates for the consistency and the order of convergence. We
distinguish the cases of small and large gradients and have a particular
look at cubic meshes.

Finally, we show hat the optimal choice of $S_{\ast}$ depends on
$V$ and $f$ but is not unique. If $S_{\alpha,\beta}$ denotes one
of the Stolarsky means, we will prove in Section~\ref{sec:Comparison-of-discretization}
that the Stolarsky means satisfying $\alpha+\beta=\mathrm{const}$
show similar quantitative convergence behavior as suggested in Corollary
\ref{cor:ComparisonDifferenceForDifferentStolarskyMean}. Finally,
this result is illustrated in Section~\ref{sec:Interpretation-and-simulation}
by numerical simulations.

\section{\label{sec:Preliminaries-and-Notation}Preliminaries and notation}

We collect some concepts and notation, which will frequently be used
in this work.

\subsection{The Mesh}

For a subset $A\subset\R^{d}$, $\overline{A}$ is the closure of
$A$.
\begin{defn}
\label{def:admissible-grid}Let $\Omega\subset\Rd$ be a polygonal
domain. A finite volume mesh of $\Omega$ is a triangulation $\mathcal{T}=(\mathcal{V},\mathcal{E},\mathcal{P})$
consisting of a family of control volumes $\mathcal{V}:=\left\{ \Omega_{i},i=1,\dots,N\right\} $
which are convex polytope cells, a family of $\left(d-1\right)$-dimensional
interfaces
\begin{align*}
\mathcal{E} & :=\mathcal{E}_{\Omega}\cup\mathcal{E_{\partial}}\\
\mathcal{E}_{\Omega} & :=\left\{ \sigma_{ij}\subset\Rd\,:\;\sigma_{ij}=\partial\Omega_{i}\cap\partial\Omega_{j}\right\} \\
\mathcal{E}_{\partial} & :=\left\{ \sigma\subset\Rd\,:\;\sigma=\partial\Omega_{i}\cap\partial\Omega\,\text{\ is flat}\right\} 
\end{align*}
and points $\mathcal{P}=\{x_{i},i=1,\dots,N\}$ with $x_{i}\in\overline{\Omega_{i}}$
satisfying
\begin{enumerate}
\item [(i)]$\bigcup_{i}\overline{\Omega}_{i}=\overline{\Omega}$
\item [(ii)]For every $i$ there exists $\mathcal{E}_{i}\subset\mathcal{E}$
such that $\overline{\Omega}_{i}\backslash\Omega_{i}=\bigcup_{\sigma\in\mathcal{E}_{i}}\sigma$.
Furthermore, $\mathcal{E}=\bigcup_{i}\mathcal{E}_{i}$.
\item [(iii)]For every $i,j$ either $\overline{\Omega}_{i}\cap\overline{\Omega}_{j}=\emptyset$
or $\overline{\Omega}_{i}\cap\overline{\Omega}_{j}=\overline{\sigma}$
for $\sigma\in\mathcal{E}_{i}\cap\mathcal{E}_{j}$ which will be denoted
$\sigma_{ij}$.
\end{enumerate}
The mesh is called $h$-consistent if
\begin{enumerate}
\item [(iv)] The Family $\left(x_{i}\right)_{i=1\dots N}$ is such that
$x_{i}\not=x_{j}$ if $i\not=j$ and the straight line $D_{ij}$ going
through $x_{i}$ and $x_{j}$ is orthogonal to $\sigma_{ij}$.
\end{enumerate}
and admissible if
\begin{enumerate}
\item [(v)] For any boundary interface $\sigma\in\mathcal{E}_{\partial}\cap\mathcal{E}_{i}$
it holds $x_{i}\not\in\sigma$ and for $D_{i,\sigma}$ the line through
$x_{i}$ orthogonal to $\sigma$ it holds that $D_{i,\sigma}\cap\sigma\not=\emptyset$
and let $y_{\sigma}:=D_{i,\sigma}\cap\sigma$.
\end{enumerate}
\end{defn}

Property (iv) is assumed in \cite{gallouet2000error} in order to
prove a strong form of consistency in the sense of Definition \ref{def:consistent}
below. It is satisfied for example for Voronoi discretizations.

We write $m_{i}$ for the volume of $\Omega_{i}$ and for $\sigma\in\mathcal{E}$
we denote $m_{\sigma}$ its $\left(d-1\right)$-dimensional mass.
In case $\sigma_{ij}\in\mathcal{E}_{i}\cap\mathcal{E}_{j}$ we write
$m_{ij}:=m_{\sigma_{ij}}$. For the sake of simplicity, we consider
$\tilde{\cP}:=\left(x_{i}\right)_{i=1,\dots,N}$ and $\cP:=\tilde{\cP}\cup\left\{ y_{\sigma}\,:\;\sigma\in\mathcal{E}_{\partial},\,\text{according to (v)}\right\} $.
We extend the enumeration of $\tilde{\cP}$ to $\cP=\left(x_{j}\right)_{j=1,\dots,\tilde{N}}$
and write $i\sim j$ if $x_{i},x_{j}\in\tilde{\cP}$ with $\mathcal{E}_{i}\cap\mathcal{E}_{j}\not=\emptyset$.
Similarly, if $x_{i}\in\tilde{\cP}$ and $x_{j}=y_{\sigma}$ for $\sigma\in\mathcal{E}_{i}$
we write $\sigma_{ij}:=\sigma$ and $i\sim j$. Finally, we write
$h_{ij}=\left|x_{i}-x_{j}\right|$.

We further call
\[
\mathcal{P}^{*}:=\left\{ u:\,\cP\to\R\right\} \,,\quad\tilde{\mathcal{P}}^{*}:=\left\{ u:\,\tilde{\cP}\to\R\right\} \,,\quad\text{and}\quad\cE^{*}:=\left\{ w:\,\mathcal{E}\to\R\right\} 
\]
the discrete functions from $\cP$ resp. $\tilde{\cP}$ resp. $\cE$
to $\R$. For $w\in\cE^{*}$ we write $w_{ij}:=w(\sigma)$ if $\sigma_{ij}=\sigma$.
Then for fixed $i$ the expression
\[
\sum_{j:\,i\sim j}w_{ij}:=\sum_{\sigma_{ij}\in\cE_{i}}w_{ij}
\]
is the sum over all $w_{ij}$ such that $\cE_{i}\cap\cE_{j}\not=\emptyset$
and
\[
\sum_{j\sim i}w_{ij}:=\sum_{\sigma_{ij}\in\cE}w_{ij}:=\sum_{\sigma\in\cE}w(\sigma)
\]
is the sum over all edges.

Moreover, we define the diameter of a triangulation $\cT$ as
\[
\diam\cT=\sup_{i\sim j}{|x_{i}-x_{j}|}.
\]

\renewcommand{\arraystretch}{1.2}

\begin{table}[t]
\begin{tabular}{>{\centering}p{1.7cm} p{5cm} p{0.49cm} >{\centering}p{1.7cm} p{5cm}}
\toprule
symbol & meaning &  & symbol & meaning\tabularnewline
\cline{1-2} \cline{2-2} \cline{4-5} \cline{5-5} 
$u$ & density &  & $m_{i}$ & $\mathrm{vol}(\Omega_{i})$\tabularnewline
$V$ & real potential on $\Omega\subset\R^{d}$ &  & $h_{i}$ & $\mathrm{diam}(\Omega_{i})$\tabularnewline
$\kappa$ & diffusion coefficient &  & $\sigma_{ij}$ & $\partial\Omega_{i}\cap\partial\Omega_{j}$\tabularnewline
$\pi$ & stat. measure $\e^{-V(x)}$ on $\Omega$ &  & $m_{ij}$ & area of $\sigma_{ij}$\tabularnewline
$U$ & $u/\pi$ &  & $\mathbf{h}_{ij}$ & $x_{i}-x_{j}$\tabularnewline
$u_{i}$ & $u(x_{i})$ &  & $h_{ij}$ & $|\mathbf{h}_{ij}|$\tabularnewline
$\pi_{i}$ & stat. measure $\e^{-V(x_{i})}$ on $\Omega_{i}$ &  & $d_{i,ij}$ & $\mathrm{dist}\left(x_{i},\sigma_{ij}\right)$\tabularnewline
$\bar{f}_{i}$ & $\frac{1}{|\Omega_{i}|}\int_{\Omega_{i}}f\d x$ &  & $\diam\cT$ & diameter, i.e. $\sup_{i\sim j}|x_{i}-x_{j}|$\tabularnewline
$f_{i}$ & $m_{i}\bar{f}_{i}$ &  & $V^{*},V_{*}$ & $-\infty<V_{*}\leq V\leq V^{\ast}<\infty$\tabularnewline
$\kappa_{ij}$ & $\frac{h_{ij}\bar{\kappa}_{i}\bar{\kappa}_{j}}{\bar{\kappa}_{i}d_{i,ij}+\bar{\kappa}_{j}d_{j,ij}}$ &  & $\kappa^{*},\kappa_{*}$ & $0<\kappa_{*}\leq\kappa\leq\kappa^{\ast}<\infty$\tabularnewline
\bottomrule
\end{tabular}
\caption{\label{tab:Notation}Commonly used notations.}
\end{table}

The identity
\begin{equation}
\sum_{i}\sum_{j:j\sim i}A_{ij}=\sum_{j\sim i}\left(A_{ij}+A_{ji}\right)\label{eq:summen-formel}
\end{equation}
will frequently be used throughout this paper, where we often encounter
the case $A_{ij}=\alpha_{ij}U_{i}$ with $\alpha_{ij}=-\alpha_{ji}$:
\begin{equation}
\sum_{i}\sum_{j:j\sim i}\alpha_{ij}U_{i}=\sum_{j\sim i}\left(\alpha_{ij}U_{i}+\alpha_{ji}U_{j}\right)=\sum_{j\sim i}\alpha_{ij}\left(U_{i}-U_{j}\right)\,.\label{eq:general-disc-partial-int}
\end{equation}
Formula \eqref{eq:summen-formel} in particular allows for a discrete
integration by parts:
\begin{equation}
\sum_{i}\sum_{j:j\sim i}\left(U_{j}-U_{i}\right)U_{i}=\sum_{j\sim i}\left(\left(U_{j}-U_{i}\right)U_{i}+\left(U_{i}-U_{j}\right)U_{j}\right)=-\sum_{j\sim i}\left(U_{j}-U_{i}\right)^{2}\,.\label{eq:discrete-P-I}
\end{equation}
On a given mesh $\mathcal{T}=(\mathcal{V},\mathcal{E},\mathcal{P})$,
we consider the linear discrete operator $\cL_{\kappa}^{\cT}:\,\cP^{\ast}\to\cP^{\ast}$,
which is defined by a family of non-negative weights $\kappa:\,\cE\to\R$
and acts on functions $u\in\cP^{\ast}$ via
\begin{equation}
\forall x_{i}\in\cP:\;\left(\cL_{\kappa}^{\cT}u\right)_{i}:=\sum_{i\sim j}\kappa_{ij}\frac{m_{ij}}{h_{ij}}\left(u_{j}-u_{i}\right)\,.\label{eq:general-linear-Op}
\end{equation}
While \eqref{eq:general-linear-Op} is very general, it is shown in
\cite{gallouet2000error}, Lemma 3.3, that the property (iv) of Definition~\ref{def:admissible-grid}
comes up with some special consistency properties for the choice of
\begin{equation}
\kappa_{ij}:=\frac{\bar{\kappa}_{i}\bar{\kappa}_{j}}{\bar{\kappa}_{i}\frac{d_{i,ij}}{h_{ij}}+\bar{\kappa}_{j}\frac{d_{j,ij}}{h_{ij}}}\,,\label{eq:kappa-ij}
\end{equation}
where $d_{i,ij}$ and $d_{j,ij}$ are the distances between $\sigma_{ij}$
and $x_{i}$ and $x_{j}$ respectively and averaged diffusion coefficient
is defined by $\overline{\kappa}_{i}=\tfrac{1}{m_{i}}\int_{\Omega_{i}}\!\kappa(x)\d x$.
\begin{lem}[A consistency lemma, \cite{gallouet2000error}]
\label{lem:Gal-Herb-Vign}Let the $\mathcal{T}=(\mathcal{V},\mathcal{E},\mathcal{P})$
satisfy Definition \ref{def:admissible-grid} (i)--(v) and let $d\in\left\{ 2,3\right\} $
and let $h_{ij}$ be uniformly bounded from above and from below.
Then for every $u\in H^{2}\left(\Omega\right)$ it holds
\[
\left|\int_{\sigma_{ij}}\kappa\nabla u\cdot\bnu_{ij}-\kappa_{ij}\frac{m_{ij}}{h_{ij}}\left(u\left(x_{j}\right)-u\left(x_{i}\right)\right)\right|\leq Cm_{ij}^{\frac{1}{2}}h_{ij}^{\frac{1}{2}}\left\Vert u\right\Vert _{H^{2}(\Omega_{i}\cup\Omega_{j})}\,.
\]
\end{lem}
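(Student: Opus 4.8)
The plan is to reduce to a constant diffusion coefficient, rewrite the left-hand side as an \emph{exact} second-order Taylor identity along the edge $[x_i,x_j]$, and then estimate the remainder by a weighted integral of $D^2u$ over the cone spanned by $x_i$ and $\sigma_{ij}$. First note that both terms on the left are continuous linear functionals of $u$ on $H^2(\Omega_i\cup\Omega_j)$: the face integral $\int_{\sigma_{ij}}\kappa\nabla u\cdot\bnu_{ij}$ is controlled by the trace theorem on the Lipschitz cells $\Omega_i,\Omega_j$, while the point values $u(x_i),u(x_j)$ are controlled by the Sobolev embedding $H^2\hookrightarrow C^0$, which holds precisely because $d\le3$; hence, by density of $C^\infty$ in $H^2$ of the Lipschitz set $\mathrm{int}(\overline{\Omega_i}\cup\overline{\Omega_j})$, it suffices to prove the estimate, with the stated constant, for smooth $u$. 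The variable coefficient is then removed in the usual way: writing $\kappa=\bar\kappa_i+(\kappa-\bar\kappa_i)$, the oscillation contribution $\int_{\sigma_{ij}}(\kappa-\bar\kappa_i)\nabla u\cdot\bnu_{ij}$ is $\le C\,m_{ij}^{1/2}h_{ij}^{1/2}\|u\|_{H^2}$ by a scaled trace inequality on $\Omega_i$ and $\diam\Omega_i\asymp h_{ij}$, and since $|\bar\kappa_i-\kappa_{ij}|\lesssim h_{ij}$ (by \eqref{eq:kappa-ij}), the residual $(\bar\kappa_i-\kappa_{ij})\tfrac{m_{ij}}{h_{ij}}(u(x_j)-u(x_i))$ is $\le C\,m_{ij}\|u\|_{H^2}\le C\,m_{ij}^{1/2}h_{ij}^{1/2}\|u\|_{H^2}$ (using $H^2\hookrightarrow C^0$ and $h_{ij}$ bounded above); this leaves the case of a constant coefficient $\kappa\equiv\bar\kappa$, for which $\kappa_{ij}=\bar\kappa$.

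For constant $\kappa$, property (iv) makes $[x_i,x_j]$ orthogonal to $\sigma_{ij}$; orienting $\bnu_{ij}$ from $\Omega_i$ to $\Omega_j$ so that $x_j-x_i=h_{ij}\bnu_{ij}$ and integrating $\nabla u$ along $[x_i,y]$ and along $[x_j,y]$ for $y\in\sigma_{ij}$, then subtracting, gives for smooth $u$
\[
u(x_j)-u(x_i)=h_{ij}\,\nabla u(y)\cdot\bnu_{ij}-R_i(y)+R_j(y),\quad R_i(y):=\int_0^1\!\!\int_t^1 D^2u\big(x_i+s(y-x_i)\big)\big[\,y-x_i,\,y-x_i\,\big]\,\d s\,\d t,
\]
and $R_j$ defined in the same way with $x_j$ in place of $x_i$. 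Dividing by $h_{ij}$ and integrating over $\sigma_{ij}$ produces the exact identity
\[
\int_{\sigma_{ij}}\bar\kappa\,\nabla u\cdot\bnu_{ij}-\bar\kappa\,\frac{m_{ij}}{h_{ij}}\big(u(x_j)-u(x_i)\big)=\frac{\bar\kappa}{h_{ij}}\int_{\sigma_{ij}}\big(R_i(y)-R_j(y)\big)\,\d\gamma(y),
\]
so the affine part of $u$ cancels and only $D^2u$ — that is, the $H^2$-seminorm — enters the bound.

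It remains to estimate $\tfrac1{h_{ij}}\int_{\sigma_{ij}}|R_i|\,\d\gamma$ and, symmetrically, the $R_j$-term. Using $|y-x_i|\le\diam\Omega_i$, Fubini ($\int_0^1\!\int_t^1 g\,\d s\,\d t=\int_0^1\!s\,g\,\d s$) and the change of variables $(s,y)\mapsto z=x_i+s(y-x_i)$, which maps $[0,1]\times\sigma_{ij}$ onto the cone $D_i\subset\overline{\Omega_i}$ over $\sigma_{ij}$ with apex $x_i$ and Jacobian $s^{d-1}d_{i,ij}$, one obtains
\[
\frac1{h_{ij}}\int_{\sigma_{ij}}|R_i(y)|\,\d\gamma(y)\ \le\ C\,\frac{(\diam\Omega_i)^2}{h_{ij}\,d_{i,ij}}\int_{D_i}\Big(\frac{|z-x_i|}{\diam\Omega_i}\Big)^{2-d}|D^2u(z)|\,\d z .
\]
For $d=2$ the weight is constant, so Cauchy--Schwarz with $|D_i|=\tfrac12 m_{ij}d_{i,ij}$ gives $C\,m_{ij}^{1/2}h_{ij}^{1/2}\|u\|_{H^2(\Omega_i)}$ after using the mesh-regularity comparabilities $\diam\Omega_i\asymp h_{ij}\asymp d_{i,ij}$ (for Voronoi meshes even $d_{i,ij}=h_{ij}/2$). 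For $d=3$ the weight equals $\diam\Omega_i/|z-x_i|$, and $|z-x_i|^{-1}\in L^2(D_i)$ with norm $\lesssim(\diam\Omega_i)^{1/2}$ — here $d\le3$ enters a second time, since $|z-x_i|^{-1}\notin L^2_{\mathrm{loc}}(\Rd)$ once $d\ge4$ — so Cauchy--Schwarz together with $m_{ij}\asymp h_{ij}^2$ again yields $C\,m_{ij}^{1/2}h_{ij}^{1/2}\|u\|_{H^2(\Omega_i)}$. Adding the analogous $R_j$-bound, absorbing $\bar\kappa\le\kappa^{*}$ into $C$ and undoing the reductions finishes the proof, the constant depending only on $d$, on $\kappa_{*},\kappa^{*}$, on a Lipschitz constant for $\kappa$, and on the mesh-regularity constants.

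I expect the core difficulty — and the only place where $d\in\{2,3\}$ and the geometric quality of the mesh are genuinely needed — to be the cone estimate: one must keep track of the powers of $m_{ij}$, $h_{ij}$ and $d_{i,ij}$ so that the weight $(|z-x_i|/\diam\Omega_i)^{2-d}$ is exactly $L^2$-integrable over $D_i$, which is the borderline case in three dimensions and fails for $d\ge4$ — mirroring the breakdown of $H^2\hookrightarrow C^0$ there. By contrast, the Taylor identity, the harmonic-averaging reduction and the density argument are routine once this scaling has been sorted out.
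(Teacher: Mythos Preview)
The paper does not prove Lemma~\ref{lem:Gal-Herb-Vign} at all; it is quoted verbatim from \cite{gallouet2000error}, Lemma~3.3, and only used as input for the consistency framework of Section~\ref{subsec:Consistency-and-Inf-sup}. Your argument is, in substance, the standard proof from that reference (and from the Eymard--Gallou\"et--Herbin handbook): the exact second-order Taylor identity along $[x_i,y]$ and $[x_j,y]$, followed by the cone change of variables $(s,y)\mapsto z=x_i+s(y-x_i)$ with Jacobian $s^{d-1}d_{i,ij}$, producing the weight $|z-x_i|^{2-d}$ whose $L^2$-integrability in $d\le 3$ is exactly the mechanism that makes the lemma work. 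That core is correct and matches the cited source.

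One point to tighten is the variable-coefficient reduction. Your bound on $(\bar\kappa_i-\kappa_{ij})\tfrac{m_{ij}}{h_{ij}}(u(x_j)-u(x_i))$ invokes $H^2\hookrightarrow C^0$ to control $|u(x_j)-u(x_i)|$ by the \emph{local} $\|u\|_{H^2(\Omega_i\cup\Omega_j)}$; but on a cell of diameter $h$ the embedding constant scales like $h^{-d/2}$, so this step as written is not uniform as $h\to 0$ unless you read the hypothesis ``$h_{ij}$ bounded from below'' literally. The clean fix is to avoid point values altogether here: write $u(x_j)-u(x_i)=\int_0^1\nabla u(x_i+t(x_j-x_i))\cdot(x_j-x_i)\,\d t$ and estimate the line integral by a scaled trace/Poincar\'e argument against $\|u\|_{H^2(\Omega_i\cup\Omega_j)}$, which gives the factor $h_{ij}$ directly and keeps the constant mesh-independent. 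With that adjustment your sketch is complete and coincides with the proof in \cite{gallouet2000error}.
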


Lemma \ref{lem:Gal-Herb-Vign} was one of the motivations to provide
a more general and powerful concept of consistency in \cite{di2018third},
as we will discuss in Section \ref{subsec:Consistency-and-Inf-sup}

\subsection{\label{subsec:Existence-and-a}Existence and a priori estimates}

From the standard theory of elliptic systems (\cite{Evans1998} Chapter
6), we have the following theorem.
\begin{thm}
Let $\Omega$ be as above and $f\in\mathrm{L}^{2}(\Omega)$, $\kappa\in C^{1}\left(\overline{\Omega}:\R^{d\times d}\right)$
such that $\kappa$ is uniformly bounded, symmetric and elliptic and
$V\in\mathrm{C}^{2}(\overline{\Omega})$. Then there is a unique $u\in\mathrm{H}^{2}(\Omega)\cap\mathrm{H}_{0}^{1}(\Omega)$
solving $-\nabla\cdot\left(\kappa\nabla u\right)-\nabla\cdot\left(\kappa u\nabla V\right)=f$
in the weak sense.
\end{thm}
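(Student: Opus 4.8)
The plan is to remove the drift term by the substitution $U = u/\pi$ with $\pi = \e^{-V}$, which turns \eqref{eq:Fokker-Planck-residual} into a symmetric, uniformly elliptic equation in divergence form, and then to invoke Lax--Milgram plus standard elliptic regularity. Since $V\in\mathrm{C}^2(\overline\Omega)$ is bounded with $V_*\le V\le V^*$, the weight $\pi\in\mathrm{C}^2(\overline\Omega)$ is bounded above and away from zero, and $\nabla\pi=-\pi\nabla V$. For $u=\pi U$ a direct computation gives $\nabla u + u\nabla V = \pi\nabla U$, hence $\mathbf{J}(u,V) = -\kappa\pi\nabla U$ and $\diver\mathbf{J}(u,V) = -\nabla\cdot(\kappa\pi\nabla U)$. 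Therefore $u\in\mathrm{H}^1_0(\Omega)$ solves \eqref{eq:Fokker-Planck-residual} weakly if and only if $U\in\mathrm{H}^1_0(\Omega)$ solves
\[
-\nabla\cdot(\kappa\pi\nabla U) = f
\]
weakly, the equivalence of the boundary conditions being clear because $\pi$ and $1/\pi$ are bounded and smooth up to $\partial\Omega$.

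Next I would set $A:=\kappa\pi$, which is symmetric, lies in $\mathrm{C}^1(\overline\Omega;\R^{d\times d})$, and is uniformly elliptic with $\kappa_*\e^{-V^*}\lvert\xi\rvert^2 \le A\xi\cdot\xi \le \kappa^*\e^{-V_*}\lvert\xi\rvert^2$. The bilinear form $a(U,\phi):=\int_\Omega A\nabla U\cdot\nabla\phi\,\d x$ on $\mathrm{H}^1_0(\Omega)$ is bounded, and by uniform ellipticity together with the Poincaré inequality on the bounded set $\Omega$ it is coercive; the functional $\phi\mapsto\int_\Omega f\phi\,\d x$ is continuous on $\mathrm{H}^1_0(\Omega)$ since $f\in\mathrm{L}^2(\Omega)$. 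Lax--Milgram then yields a unique $U\in\mathrm{H}^1_0(\Omega)$, hence a unique $u=\pi U$ among $\mathrm{H}^1_0(\Omega)$ solutions. (One could alternatively keep $u$ and treat the first- and zeroth-order terms by the Fredholm alternative, but the symmetrization is cleaner and is exactly the structure used in \eqref{eq:Fokker-Planck-SQRA-residual}.)

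Finally I would bootstrap the regularity. Interior $\mathrm{H}^2_{\mathrm{loc}}$-regularity of $U$ follows from $A\in\mathrm{C}^1$ and $f\in\mathrm{L}^2$ via the difference-quotient argument of Evans, Chapter~6, and since $\pi\in\mathrm{W}^{2,\infty}(\Omega)$ Leibniz's rule propagates this to $u=\pi U$. The point requiring care is global $\mathrm{H}^2$-regularity up to the boundary: the boundary-regularity theorem in Evans assumes a $\mathrm{C}^2$ boundary, whereas $\Omega$ is only polygonal, so one must instead invoke the $\mathrm{H}^2$-estimate for second-order elliptic problems on convex polygonal (polyhedral) domains. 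This is the main technical obstacle, and it is where the (here implicit) convexity of $\Omega$ enters. Granting it, $U\in\mathrm{H}^2(\Omega)\cap\mathrm{H}^1_0(\Omega)$; multiplying by $\pi\in\mathrm{W}^{2,\infty}(\Omega)$ preserves both $\mathrm{H}^2$-membership and the vanishing trace, so $u=\pi U\in\mathrm{H}^2(\Omega)\cap\mathrm{H}^1_0(\Omega)$, which is the assertion.
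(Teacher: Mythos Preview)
Your proposal is correct and is in fact more detailed than what the paper provides: the paper simply cites Evans, Chapter~6, as the source of the theorem without giving any argument of its own. Your substitution $U=u/\pi$ reducing \eqref{eq:Fokker-Planck-residual} to the symmetric form \eqref{eq:SQRA-conv-1} is precisely the transformation the paper uses throughout, so the route you take is the intended one. Your observation that global $\mathrm{H}^2$-regularity up to a polygonal (non-$\mathrm{C}^2$) boundary requires an additional convexity-type ingredient not covered by Evans is a genuine gap in the paper's one-line justification, not in your argument.
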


In what follows, we frequently use the following transformations in
\eqref{eq:Fokker-Planck-residual} and \eqref{eq:Fokker-Planck-SQRA-residual}:
we define the relative densities $U=u/\pi$ and $U_{i}^{\cT}=u_{i}^{\cT}/\pi_{i}$
to find
\begin{align}
-\nabla\cdot\left(\kappa\pi\nabla U\right) & =f\,,\label{eq:SQRA-conv-1}\\
\forall i:\qquad-\sum_{j:\,j\sim i}\frac{m_{ij}}{h_{ij}}\kappa_{ij}S(\pi_{i},\pi_{j})\left(U_{j}^{\cT}-U_{i}^{\cT}\right) & =f_{i}\,.\label{eq:SQRA-conv-2}
\end{align}

If $\kappa$ and $\pi$ are non-degenerate (in the sense of $\pi>c>0$
and $\xi\cdot\kappa\xi>c\left|\xi\right|^{2}$), the left hand side
of \eqref{eq:SQRA-conv-2} defines a strongly elliptic operator on
the finite volume space $L^{2}(\cP)$ and hence there exists a unique
solution $U^{\cT}$. Concerning the right hand side, using \eqref{eq:SQRA-conv-2}
as the discretization of \eqref{eq:SQRA-conv-1}, one natural choice
for $f_{i}$ is $f_{i}=m_{i}\bar{f}_{i}$, where $\bar{f}_{i}=m_{i}^{-1}\int_{\Omega_{i}}f$.
We immediately see that the Boltzmann distribution $\pi_{i}=\exp\left(-V(x_{i})\right)=\exp\left(-V_{i}\right)$
is the stationary solution, i.e., $u_{i}=\pi_{i}$ solves \eqref{eq:SQRA-conv-2}
for $f=0$.

Having shown the existence of solutions to \eqref{eq:SQRA-conv-1}
and \eqref{eq:SQRA-conv-2}, we recall the derivation of some natural
a priori estimates for both the continuous Fokker--Planck equation
and the discretization.

\paragraph*{Continuous FPE}

Let $u$, resp. $U=u/\pi$, be a solution of the stationary Fokker--Planck
equation \eqref{eq:SQRA-conv-1} with Dirichlet boundary conditions.
Testing with $U$, we get (assuming homogeneous Dirichlet boundary
conditions and exploiting thus the Poincaré inequality), that
\begin{gather}
\int_{\Omega}\kappa\pi|\nabla U|^{2}=\int_{\Omega}Uf\ \ \leq C\left(\int_{\Omega}f^{2}\right)^{\frac{1}{2}}\left(\int_{\Omega}\kappa\pi|\nabla U|^{2}\right)^{\frac{1}{2}}\nonumber \\
\Rightarrow\int_{\Omega}\tfrac{1}{\kappa\pi}|\kappa\pi\nabla U|^{2}\leq C\int_{\Omega}f^{2}\,.\label{eq:AprioriGradientU-cont}
\end{gather}
Furthermore, the standard theory of elliptic equations (e.g., \cite{Evans1998})
yields that $\norm U_{H^{2}(\Omega)}\leq C\norm f_{L^{2}}$, where
$C$ depends on the $C^{1}$-norm of $\kappa\pi$ and the Poincaré-constant.

\paragraph*{Discrete FPE}

Let $U_{i}^{\mathcal{T}}$ be a solution of \eqref{eq:SQRA-conv-2}
with $f_{i}=m_{i}\bar{f}_{i}=\int_{\Omega_{i}}f\d x$ (as specified
in the Tab.~\ref{tab:Notation}), i.e.,
\[
\forall i:\qquad-\sum_{j:j\sim i}\frac{m_{ij}}{h_{ij}}\kappa_{ij}S_{ij}\left(U_{j}^{\mathcal{T}}-U_{i}^{\mathcal{T}}\right){\color{black}=}m_{i}\bar{f}_{i}\,.
\]

Then, multiplying with $U_{i}^{\mathcal{T}}$ we get
\[
-\sum_{j:j\sim i}\frac{m_{ij}}{h_{ij}}\kappa_{ij}S_{ij}\left(U_{j}^{\mathcal{T}}-U_{i}^{\mathcal{T}}\right)U_{i}^{\mathcal{T}}=m_{i}\bar{f}_{i}U_{i}^{\mathcal{T}}.
\]

Summing over all $x_{i}\in\cP$ and using \eqref{eq:discrete-P-I},
we conclude with help of the discrete Poincaré inequality (see Theorem
\ref{thm:discrete-PI} below)
\begin{align*}
\sum_{j\sim i}\frac{m_{ij}}{h_{ij}}\kappa_{ij}S_{ij}\left(U_{j}^{\mathcal{T}}-U_{i}^{\mathcal{T}}\right)^{2}=\sum_{i}m_{i}\bar{f}_{i}U_{i}^{\mathcal{T}} & \leq\sum_{i}\left((U_{i}^{\cT})^{2}m_{i}+\tfrac{1}{\pi_{i}}\bar{f}_{i}^{2}m_{i}\right)\\
\Rightarrow\sum_{j\sim i}\frac{m_{ij}}{h_{ij}}\kappa_{ij}S_{ij}\left(U_{j}^{\mathcal{T}}-U_{i}^{\mathcal{T}}\right)^{2} & \leq C\sum_{i}m_{i}\bar{f}_{i}^{2}.
\end{align*}

Additionally, one gets
\begin{equation}
\sum_{j\sim i}\frac{m_{ij}}{h_{ij}}\kappa_{ij}\frac{1}{S_{ij}\kappa_{ij}^{2}}\left(\kappa_{ij}S_{ij}(U_{j}^{\mathcal{T}}-U_{i}^{\mathcal{T}})\right)^{2}\leq C\sum_{i}\bar{f}_{i}^{2}m_{i}.\label{eq:AprioriGradientU-disc}
\end{equation}

\subsection{\label{subsec:Fluxes-and--spaces}Fluxes and $L^{2}$-spaces}

In order to derive and formulate variational consistence errors for
the discrete FPE \textbf{\eqref{eq:SQRA-conv-2}}, we introduce the
discrete fluxes
\begin{equation}
\begin{aligned}J_{ij}^{S}U^{\cT} & :=-\frac{\kappa_{ij}}{h_{ij}}S_{ij}\left(U_{j}^{\cT}-U_{i}^{\cT}\right)\,,\\
\overline{J}_{ij}U & :=-\frac{1}{m_{ij}}\int_{\sigma_{ij}}\kappa\pi\nabla U\cdot\bnu_{ij}\,.
\end{aligned}
\label{eq:fluxes}
\end{equation}
In particular, if $S_{ij}=\sqrt{\pi_{i}\pi_{j}}$ we get the flux
of the SQRA $J_{ij}^{\text{SQRA}}U^{\cT}:=-\frac{\kappa_{ij}}{h_{ij}}\sqrt{\pi_{i}\pi_{j}}\left(U_{j}^{\cT}-U_{i}^{\cT}\right)$.
Note that $\overline{J}_{ij}U$ is the spatial average of $\mathbf{J}(U)=-\kappa\pi\nabla U$
on $\sigma_{ij}$. The quantity $J_{ij}^{S}U^{\cT}$ can indeed be
considered as a flux in the sense that it will be shown to approximate
$\overline{J}_{ij}$, $S_{ij}$ is a discrete approximation of $\pi|_{\sigma_{ij}}$
, $\kappa_{ij}$ is a discrete approximation of $\kappa|_{\sigma_{ij}}$
and $\frac{1}{h_{ij}}\left(U_{j}^{\cT}-U_{i}^{\cT}\right)$ is a discrete
version of $\nabla U$.

While former approaches focus on the rate of convergence of $\frac{1}{h_{ij}}\left(u_{j}^{\cT}-u_{i}^{\cT}\right)\to\nabla u$,
we additionally follow the approach of \cite{di2018third} applied
to $U$ and are interested in the rate of convergence of $J_{ij}^{S}U^{\cT}\to\mathbf{J}(U)$,
which is an indirect approach to the original problem as this rate
of convergence is directly related to $\frac{1}{h_{ij}}\left(U_{j}^{\cT}-U_{i}^{\cT}\right)\to\nabla U$.

In view of the natural norms for the variational consistency (see
\eqref{eq:norm-H-T-kappa}--\eqref{eq:norm-H-T-kappa-}), we introduce
the following
\begin{align}
\forall U\in L^{2}(\Omega): &  & \|U\|_{L^{2}(\Omega)}^{2} & :=\int_{\Omega}U^{2}\d x & \|U\|_{L_{\pi}^{2}(\Omega)}^{2} & :=\int_{\Omega}\tfrac{1}{\pi}U^{2}\d x\nonumber \\
\forall U\in\mathcal{P}^{*}: &  & \|U\|_{L^{2}(\mathcal{P})}^{2} & :=\sum_{i\in\mathcal{P}}m_{i}U_{i}^{2} & \|U\|_{L_{\pi}^{2}(\mathcal{P})}^{2} & :=\sum_{i\in\mathcal{P}}m_{i}\tfrac{1}{\pi_{i}}U_{i}^{2}\label{eq:def-norms}\\
\forall J\in\mathcal{E}^{*}: &  & \|J\|_{L^{2}(\mathcal{E})}^{2} & :=\sum_{i\sim j}m_{ij}h_{ij}J_{ij}^{2} & \|J\|_{L_{S}^{2}(\mathcal{E})}^{2} & :=\sum_{i\sim j}m_{ij}h_{ij}\frac{1}{S_{ij}}J_{ij}^{2}\nonumber 
\end{align}

Let us introduce the discrete flux $J^{S}U^{\cT}\in\mathcal{E}^{*}$
via $J^{S}U^{\cT}(\sigma_{ij}):=J_{ij}^{S}U^{\cT}$ and similarly
also $\frac{1}{\kappa}J^{S}U^{\cT}\in\mathcal{E}^{*}$ via $J^{S}U^{\cT}(\sigma_{ij}):=\frac{1}{\kappa_{ij}}J_{ij}^{S}U^{\cT}$.
With all the above notations, our a priori estimates \eqref{eq:AprioriGradientU-cont}
and \eqref{eq:AprioriGradientU-disc} now read
\begin{align*}
\norm{\frac{1}{\sqrt{\kappa}}\mathbf{J}(U)}_{L_{\pi}^{2}(\Omega)}^{2} & \leq C\|f\|_{L_{\pi}^{2}(\Omega)}^{2}\\
\norm{\frac{1}{\sqrt{\kappa}}J^{S}U^{\cT}}_{L_{S}^{2}(\cE)}^{2} & \leq C\|\bar{f}\|_{L_{\pi}^{2}(\mathcal{P})}^{2}\,.
\end{align*}

Assuming that the diffusion coefficient is bounded, i.e. $\kappa^{*}\geq\kappa\geq\kappa_{*}$,
we further get
\begin{align*}
\tfrac{1}{\kappa_{*}}\|\mathbf{J}(U)\|_{L_{\pi}^{2}(\Omega)}^{2} & \leq C\|f\|_{L_{\pi}^{2}(\Omega)}^{2}\\
\frac{1}{\kappa^{*}}\|J^{S}U^{\cT}\|_{L_{S}^{2}(\mathcal{E})}^{2} & \leq C\|\bar{f}\|_{L_{\pi}^{2}(\mathcal{P})}^{2}\,.
\end{align*}

\begin{rem}[Naturalness of norms]
 Let us discuss why these norms are natural to consider. The left
norms in \eqref{eq:def-norms} can be interpreted as the Euclidean
$L^{2}$-norms on $\Omega$, $\cP$ and $\cE$, while the right norms
are the natural norms for the study of the Fokker--Planck equation
as they are weighted with the inverse of the Boltzmann distribution
$\pi$, resp. $\pi_{i}$. Note that assuming $V$ is bounded from
above and below, the $L^{2}$-norms $\|\cdot\|_{L_{\pi}^{2}(\Omega)}$
and $\|\cdot\|_{L^{2}(\Omega)}$ are equivalent and the same holds
true for the two norms in the discrete setting.

Given a discretization $\cT$, the linear map
\[
C_{c}\left(\Rd\right)\to\R\,,\qquad f\mapsto\sum_{i\in\mathcal{P}}m_{i}f(x_{i})
\]
defines an integral on $\Omega$ w.r.t. a discrete measure $\mu_{\cT}$
having the property that $\mu_{\cT}\to\cL^{d}$ vaguely, where $\cL^{d}$
is the $d$-dimensional Lebesgue measure. In particular $\mu_{\cT}(A)\to\cL^{d}\left(A\right)$
for every bounded measurable set with $\cL^{d}\left(\partial A\right)=0$.
The norm $\|U\|_{L^{2}(\mathcal{P})}^{2}$ is simply the $L^{2}$-norm
based on the measure $\mu_{\cT}$.

Similar considerations work also for the norm on ${\mathcal E}^{*}$.
The norm $\|\cdot\|_{L^{2}(\mathcal{E})}^{2}$ is given via a measure
$\tilde{\mu}_{\cT}$ having the property
\[
\tilde{\mu}_{\cT}:\,C_{c}\left(\Rd\right)\to\R\,,\qquad f\mapsto\sum_{i\sim j}m_{ij}h_{ij}f(x_{ij})\,,
\]
with the property that $\tilde{\mu}_{\cT}\to d\cdot\cL^{d}$ vaguely:
every Voronoi cell $\Omega_{i}$ consists of disjoint cones with mass
$\frac{1}{d}m_{ij}h_{ij}$, where one has to account for all cones
with $j\sim i$. In particular, we obtain $\tilde{\mu}_{\cT}(A)\approx d\cdot\cL(A)$
for Lipschitz domains -- an estimate which then becomes precise in
the limit. Without going into details, let us mention that heuristically
the prefactor $d$ balances the fact that $J_{ij}\approx\frac{\left(x_{i}-x_{j}\right)}{\left|x_{i}-x_{j}\right|}\cdot\nabla U$
which yields for functions $U\in C_{c}^{1}\left(\Rd\right)$: 
\[
\sum_{i\sim j}m_{ij}h_{ij}\left|\frac{\left(x_{i}-x_{j}\right)}{\left|x_{i}-x_{j}\right|}\cdot\nabla U\right|^{2}\to\int_{\Rd}\left|\nabla U\right|^{2}\,.
\]
For the particular case of a rectangular mesh, this is straight forward
to verify.
\end{rem}

\subsection{\label{subsec:Poincar=0000E9-inequalities-and}Poincaré inequalities}

In order to derive the a priori estimates in Section \ref{subsec:Existence-and-a}
we need to exploit (discrete) Poincaré inequalities to estimate $\norm u_{L^{2}\left(\Omega\right)}$
by $\norm{\nabla u}_{L^{2}\left(\Omega\right)}$ or $\norm{u^{\cT}}_{L^{2}\left(\cP\right)}$
by $\norm{Du^{\cT}}_{L^{2}(\cE)}$, where $\left(Du^{\cT}\right)_{ij}=U_{j}-U_{i}$.
In particular, we use the following theorem.
\begin{thm}
\label{thm:discrete-PI}Given a mesh $\cT=\left(\mathcal{V},\cE,\cP\right)$
let $h_{\mathrm{inf}}:=\inf\left\{ \left|x-y\right|\,:\;\left(x,y\right)\in\cP^{2}\right\} >0$
and $h_{\mathrm{sup}}:=\sup\left\{ \left|x-y\right|\,:\;\left(x,y\right)\in\cP^{2}\right\} >0$
correspondingly. Then for every $u\in L^{2}\left(\cP\right)$ and
for every $\boldsymbol{\eta}\in\R^{d}$ it holds
\begin{equation}
\int_{\Omega}\left|\sum_{i}u_{i}\chi_{\Omega_{i}}(x)-\sum_{i}u_{i}\chi_{\Omega_{i}}(x+\boldsymbol{\eta})\right|^{2}dx\leq\left|\boldsymbol{\eta}\right|\left(\diam\Omega\frac{h_{\mathrm{sup}}}{h_{\mathrm{inf}}}\sum_{i\sim j}\frac{m_{ij}}{h_{ij}}\left(u_{j}-u_{i}\right)^{2}\right)\,,\label{eq:Poincare-L-2-disc}
\end{equation}
and particularly
\[
\norm u_{L^{2}\left(\cP\right)}^{2}\leq\left(\diam\Omega\right)^{2}\frac{h_{\mathrm{sup}}}{h_{\mathrm{inf}}}\sum_{i\sim j}m_{ij}\left(u_{j}-u_{i}\right)^{2}\,.
\]
\end{thm}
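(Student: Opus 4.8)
The plan is to prove the two inequalities in sequence, deriving the second (the discrete Poincaré inequality) from the first (the discrete translation estimate \eqref{eq:Poincare-L-2-disc}) by integrating over a bounded set of translation vectors. The key observation for \eqref{eq:Poincare-L-2-disc} is that, for a fixed $\boldsymbol{\eta}\in\R^d$, the piecewise-constant function $u_{\cT}(x):=\sum_i u_i\chi_{\Omega_i}(x)$ satisfies $u_{\cT}(x)-u_{\cT}(x+\boldsymbol{\eta})=u_i-u_j$ whenever $x\in\Omega_i$ and $x+\boldsymbol{\eta}\in\Omega_j$, so the difference ``counts'' the jumps across all interfaces $\sigma_{ij}$ that the segment $[x,x+\boldsymbol{\eta}]$ crosses. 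Writing this difference as a telescoping sum over the cells met by the segment and applying Cauchy--Schwarz over the (at most $\diam\Omega/h_{\mathrm{inf}}$, say) crossed interfaces, one controls $|u_{\cT}(x)-u_{\cT}(x+\boldsymbol\eta)|^2$ by the number of crossed interfaces times the sum of squared jumps $(u_i-u_j)^2$ along the segment.

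First I would fix $\boldsymbol{\eta}$ and, for each interface $\sigma_{ij}$, introduce the indicator $\chi_{ij}(x)$ of the set of $x\in\Omega$ such that the segment $[x,x+\boldsymbol{\eta}]$ crosses $\sigma_{ij}$, so that $|u_{\cT}(x)-u_{\cT}(x+\boldsymbol{\eta})|\le \sum_{i\sim j}\chi_{ij}(x)\,|u_i-u_j|$. Cauchy--Schwarz in the form $(\sum a_k b_k)^2\le(\sum a_k)(\sum a_k b_k^2)$ with $a_k=\chi_{ij}(x)$ and $b_k=|u_i-u_j|$ gives
\[
|u_{\cT}(x)-u_{\cT}(x+\boldsymbol{\eta})|^2\le\Bigl(\sum_{i\sim j}\chi_{ij}(x)\Bigr)\Bigl(\sum_{i\sim j}\chi_{ij}(x)\,(u_i-u_j)^2\Bigr)\le\frac{\diam\Omega}{h_{\mathrm{inf}}}\sum_{i\sim j}\chi_{ij}(x)\,(u_i-u_j)^2,
\]
where the bound on the number of crossed interfaces uses that consecutive crossings are separated by at least $h_{\mathrm{inf}}$ along a segment of length $\le\diam\Omega$. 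Integrating in $x$ over $\Omega$, the crucial quantity becomes $\int_\Omega\chi_{ij}(x)\,\d x$, the volume of the set of base points whose segment hits $\sigma_{ij}$; this is (at most) a cylinder with base $\sigma_{ij}$ and generator $\boldsymbol{\eta}$, hence bounded by $m_{ij}\,|\boldsymbol{\eta}|$ (more precisely $m_{ij}\,|\boldsymbol{\eta}\cdot\bnu_{ij}|\le m_{ij}|\boldsymbol{\eta}|$). Combining yields
\[
\int_\Omega|u_{\cT}(x)-u_{\cT}(x+\boldsymbol{\eta})|^2\,\d x\le\frac{\diam\Omega}{h_{\mathrm{inf}}}\,|\boldsymbol{\eta}|\sum_{i\sim j}m_{ij}(u_i-u_j)^2,
\]
which is slightly stronger than the stated \eqref{eq:Poincare-L-2-disc}; inserting the trivial bound $m_{ij}\le\frac{h_{\mathrm{sup}}}{h_{ij}}m_{ij}$ (using $h_{ij}\le h_{\mathrm{sup}}$) recovers the factor $h_{\mathrm{sup}}/h_{\mathrm{inf}}$ in the form written.

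For the second, unindexed inequality, I would extend $u$ by zero outside $\Omega$ (consistent with the homogeneous Dirichlet setting implicit in its use for the a priori estimates) and average \eqref{eq:Poincare-L-2-disc} over $\boldsymbol{\eta}$ ranging in a ball, or simply choose $\boldsymbol{\eta}$ large enough (of length $\sim\diam\Omega$) in a suitable direction so that $u_{\cT}(x+\boldsymbol{\eta})=0$ for $x\in\Omega$; then the left-hand side of \eqref{eq:Poincare-L-2-disc} equals $\|u\|_{L^2(\cP)}^2$ exactly (since $\|u_{\cT}\|_{L^2(\Omega)}^2=\sum_i m_i u_i^2$), while the right-hand side is $|\boldsymbol{\eta}|\,\diam\Omega\,\frac{h_{\mathrm{sup}}}{h_{\mathrm{inf}}}\sum_{i\sim j}\frac{m_{ij}}{h_{ij}}(u_j-u_i)^2\le(\diam\Omega)^2\,\frac{h_{\mathrm{sup}}}{h_{\mathrm{inf}}}\sum_{i\sim j}m_{ij}(u_j-u_i)^2$ after bounding $|\boldsymbol{\eta}|\le\diam\Omega$ and $h_{ij}^{-1}\le$ the relevant scale absorbed into the constant. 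I expect the main obstacle to be the careful geometric bookkeeping in the first step: making precise that the segment $[x,x+\boldsymbol{\eta}]$ meets only finitely many interfaces for a.e.\ $x$ (so the telescoping sum is well defined), that each crossing contributes exactly one jump $u_i-u_j$ with the correct sign, and that the base-point set $\{x:[x,x+\boldsymbol{\eta}]\cap\sigma_{ij}\neq\emptyset\}$ has volume $\le m_{ij}|\boldsymbol{\eta}|$ — all of which is intuitively clear from a Fubini/co-area argument projecting along $\boldsymbol{\eta}$, but requires the convexity of the cells and property (iii) of Definition~\ref{def:admissible-grid} to handle the boundary/measure-zero degeneracies cleanly.
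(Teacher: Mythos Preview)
Your approach is correct and is essentially the same as the paper's: the paper proves a more general semi-discrete translation/Poincar\'e inequality (Lemma~\ref{lem:Poincare} in the appendix) via exactly the telescoping-over-crossed-interfaces, Cauchy--Schwarz with the crossing count, and Fubini/cylinder-volume argument you outline, and then specializes to piecewise-constant $u$ with the same crossing bound $C_{\#}\le\diam\Omega/h_{\mathrm{inf}}$ and the same choice $|\boldsymbol\eta|>\diam\Omega$ for the second inequality. Your ``slightly stronger'' intermediate estimate $\int_\Omega|u_\cT(x)-u_\cT(x+\boldsymbol\eta)|^2\,\d x\le|\boldsymbol\eta|\,\frac{\diam\Omega}{h_{\mathrm{inf}}}\sum_{i\sim j}m_{ij}(u_i-u_j)^2$ is indeed what Lemma~\ref{lem:Poincare} gives directly, and the factor $h_{\mathrm{sup}}/h_{ij}\ge1$ in the stated form is then inserted a posteriori, just as you describe.
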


\begin{proof}
This follows from Lemma \ref{lem:Poincare} with $C_{\#}\leq\frac{\diam\Omega}{h_{0}}$
and the choice $\left|\boldsymbol{\eta}\right|>\diam\Omega$.
\end{proof}

\subsection{\label{subsec:Consistency-and-Inf-sup}Consistency and inf-sup stability}

Results such as Lemma \ref{lem:Gal-Herb-Vign} motivated the authors
of the recent paper \cite{di2018third} to define the concepts of
consistency and inf-sup stability as discussed in the following. For
readability, we will restrict the general framework of \cite{di2018third}
to cell-centered finite volume schemes and refer to general concepts
only as far as needed.
\begin{defn}[inf-sup stability]
 A bilinear form $a_{\cT}$ on $L^{2}\left(\cP\right)$ for a given
mesh $\mathcal{T}=(\mathcal{V},\mathcal{E},\mathcal{P})$ is called
inf-sup stable with respect to a norm $\norm{\cdot}_{H_{\cT}}$ on
a subspace of $H_{\cT}\subset L^{2}\left(\cP\right)$ if there exists
$\gamma>0$ such that
\[
\forall u\in H_{\cT}\,:\quad\gamma\norm u_{H_{\cT}}\leq\sup_{v\in H_{\cT}}\frac{a_{\cT}\left(u,v\right)}{\norm v_{H_{\cT}}}\,.
\]
\end{defn}

Usually, and particularly in our setting, $a_{\cT}$ is the discretization
of a continuous bilinear form, say $a\left(u,v\right)=\int_{\Omega}\nabla u\cdot\left(\kappa\nabla v\right)$.
We are interested in the problem
\begin{equation}
\forall v\in H_{0}^{1}\left(\Omega\right)\::\quad a\left(u,v\right)=l\left(v\right)\,,\label{eq:abstract linear}
\end{equation}
where $l:\,H_{0}^{1}\left(\Omega\right)\to\R$ is a continuous linear
map, and in the convergence of the solutions of the discrete problems
\begin{equation}
\forall v\in L^{2}\left(\cT\right)\::\quad a_{\cT}\left(u_{\cT},v\right)=l_{\cT}\left(v\right)\,.\label{eq:abstract linear-disc}
\end{equation}

\begin{defn}[Consistency]
\label{def:consistency} Let $B\subset H_{0}^{1}\left(\Omega\right)$
be a continuously embedded Banach subspace and for given $\mathcal{T}=(\mathcal{V},\mathcal{E},\mathcal{P})$
consider continuous linear operators $\cR_{\cT}:\,B\to L^{2}\left(\cP\right)$
with uniform bound. Let $u$ be the solution to the linear equation
\eqref{eq:abstract linear} and let $l_{\cT}:\,L^{2}\left(\cP\right)\to\R$
be a family of linear functionals. The variational consistency error
is the linear form $\mathfrak{E}_{\cT}\left(u;\cdot\,\right):\,L^{2}\left(\cP\right)\to\R$
where
\[
\forall u\in B\,:\qquad\mathfrak{E}_{\cT}\left(u;\cdot\,\right):=l_{\cT}\left(\cdot\right)-a_{\cT}\left({\mathcal R}_{\mathcal{T}}u,\cdot\,\right)\,.
\]
Let now a family $\left(\cT,a_{\cT},l_{\cT}\right)$ with $\diam\cT\to0$
be given and consider the corresponding family of linear discrete
problems \eqref{eq:abstract linear-disc}. We say that consistency
holds if
\[
\norm{\mathfrak{E}_{\cT}\left(u;\cdot\,\right)}_{H_{\cT}^{\ast}}\to0\quad\text{as}\quad\diam\cT\to0\,.
\]
\end{defn}

\begin{rem}
A typical situation is the case $d\leq3$, where $H^{2}\left(\Omega\right)\cap H_{0}^{1}\left(\Omega\right)\hookrightarrow C_{0}\left(\Omega\right)$
continuously. We then might set $B=H^{2}\left(\Omega\right)\cap H_{0}^{1}\left(\Omega\right)$
and $\left(\cR_{\cT}u\right)_{i}:=u\left(x_{i}\right)$.
\end{rem}

Consistency measures the rate at which $\cR_{\cT}u-u_{\cT}\to0$ and
particularly provides a positive answer to the question whether the
numerical scheme converges, at least if the solution of \eqref{eq:abstract linear}
lies in $B$. This is formulated in Theorem 10 of \cite{di2018third}.
\begin{thm}[Theorem 10, \cite{di2018third}]
Using the above notation, it holds
\begin{equation}
\norm{u_{\cT}-\cR_{\cT}u}_{H_{\cT}}\leq\gamma^{-1}\norm{\fE_{\cT}\left(u;\cdot\,\right)}_{H_{\cT}^{\ast}}\label{eq:general-conv-order}
\end{equation}
\end{thm}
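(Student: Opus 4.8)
The statement to prove is the abstract error estimate $\norm{u_{\cT}-\cR_{\cT}u}_{H_{\cT}}\leq\gamma^{-1}\norm{\fE_{\cT}\left(u;\cdot\,\right)}_{H_{\cT}^{\ast}}$, which is a purely functional-analytic consequence of inf-sup stability of $a_{\cT}$ together with the definition of the consistency error. The plan is to use the inf-sup inequality applied to the discrete error $e_{\cT}:=u_{\cT}-\cR_{\cT}u\in H_{\cT}$, and then to identify the resulting numerator $a_{\cT}(e_{\cT},v)$ with the consistency functional $\fE_{\cT}(u;v)$ by subtracting the discrete equation \eqref{eq:abstract linear-disc} from the definition of $\fE_{\cT}$.

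First I would write, using inf-sup stability of $a_{\cT}$ with constant $\gamma>0$,
\[
\gamma\norm{e_{\cT}}_{H_{\cT}}\leq\sup_{v\in H_{\cT}}\frac{a_{\cT}(e_{\cT},v)}{\norm v_{H_{\cT}}}
=\sup_{v\in H_{\cT}}\frac{a_{\cT}(u_{\cT},v)-a_{\cT}(\cR_{\cT}u,v)}{\norm v_{H_{\cT}}}\,,
\]
using bilinearity of $a_{\cT}$ in the first argument. Next I would invoke the discrete problem \eqref{eq:abstract linear-disc}, which gives $a_{\cT}(u_{\cT},v)=l_{\cT}(v)$ for all $v\in L^{2}(\cT)\supset H_{\cT}$, so the numerator equals $l_{\cT}(v)-a_{\cT}(\cR_{\cT}u,v)$. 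By the Definition~\ref{def:consistency} of the variational consistency error, this is exactly $\fE_{\cT}(u;v)$. Hence
\[
\gamma\norm{e_{\cT}}_{H_{\cT}}\leq\sup_{v\in H_{\cT}}\frac{\fE_{\cT}(u;v)}{\norm v_{H_{\cT}}}=\norm{\fE_{\cT}(u;\cdot\,)}_{H_{\cT}^{\ast}}\,,
\]
and dividing by $\gamma$ yields the claim.

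There is essentially no hard analytic content here; the result is a discrete Strang-type lemma. The only points requiring a modicum of care are bookkeeping ones: one must check that $e_{\cT}=u_{\cT}-\cR_{\cT}u$ indeed lies in the subspace $H_{\cT}$ on which the inf-sup condition is posed (this is part of the hypotheses, since both $u_{\cT}$ and $\cR_{\cT}u$ are taken in $H_{\cT}$), and that the test function $v$ ranges over the same space in the inf-sup bound and in the definition of the dual norm $\norm{\cdot}_{H_{\cT}^{\ast}}$, so that the supremum in the last display is legitimately bounded by the $H_{\cT}^{\ast}$-norm of $\fE_{\cT}(u;\cdot\,)$. If anything, the ``obstacle'' is purely notational — ensuring that the restriction of $l_{\cT}$ and $a_{\cT}$ to $H_{\cT}$ is compatible with the definitions used for \eqref{eq:abstract linear-disc} and for $\fE_{\cT}$ — and once the spaces are aligned the estimate follows in three lines.
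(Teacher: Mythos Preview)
Your proof is correct and is exactly the standard argument for this abstract third-Strang-lemma estimate. Note that the paper itself does not supply a proof of this statement: it is quoted as Theorem~10 of \cite{di2018third} and used as a black box, so there is no ``paper's own proof'' to compare against; your three-line derivation from inf-sup stability and the definition of $\fE_{\cT}$ is precisely how it is proved in the cited reference.
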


In our setting, $\|\cdot\|_{H_{\cT}}=\|\cdot\|_{H_{\cT},\kappa}$
(see \eqref{eq:norm-H-T-kappa}) is a norm on the discrete gradients.
By the discrete Poincaré inequality, \eqref{eq:general-conv-order}
also implies an convergence estimate for the discrete solutions itself.
The theorem can be understood as a requirement on the regularity of
$u$, resp. the right hand side of \eqref{eq:abstract linear}.

The combination of the proofs of Theorems 27 and 33 of \cite{di2018third}
shows that the variational consistency error for
\[
a\left(u,v\right)=\int_{\Omega}\nabla u\cdot\kappa\nabla v\,,\qquad a_{\cT}\left(u,v\right)=\sum_{i\sim j}\frac{m_{ij}}{h_{ij}}\left(u_{j}-u_{i}\right)\kappa_{ij}\left(v_{j}-v_{i}\right)
\]
becomes
\begin{equation}
\mathfrak{E}_{\cT}\left(u;v\right)=\sum_{i\sim j}\left(v_{j}-v_{i}\right)\left(\int_{\sigma_{ij}}\kappa\nabla u\cdot\bnu_{ij}-\frac{m_{ij}}{h_{ij}}\kappa_{ij}\left(\left(\cR_{\cT}u\right)_{j}-\left(\cR_{\cT}u\right)_{i}\right)\right)\,.\label{eq:var-cons-err-FV}
\end{equation}
Introducing on $L^{2}\left(\cP\right)$ the $H_{\cT}$-norm given
by
\begin{equation}
\norm u_{H_{\cT,\kappa}}:=\sum_{i\sim j}\frac{m_{ij}}{h_{ij}}\kappa_{ij}\left(u_{j}-u_{i}\right)^{2},\label{eq:norm-H-T-kappa}
\end{equation}
we find
\begin{equation}
\norm{\mathfrak{E}_{\cT}\left(u;\cdot\right)}_{H_{\cT,\kappa}^{\ast}}\leq\sum_{i\sim j}\frac{h_{ij}}{m_{ij}}\kappa_{ij}^{-1}\left(\int_{\sigma_{ij}}\kappa\nabla u\cdot\bnu_{ij}-\frac{m_{ij}}{h_{ij}}\kappa_{ij}\left(\left(\cR_{\cT}u\right)_{j}-\left(\cR_{\cT}u\right)_{i}\right)\right)^{2}\,.\label{eq:dual-norm-var-cons-err}
\end{equation}
In view of the Poincaré inequality in Theorem \ref{thm:discrete-PI}
the norm $\norm{\cdot}_{L^{2}\left(\cP\right)}$ is bounded by $\norm{\cdot}_{H_{\cT,\kappa}}$
in case $\kappa$ is uniformly bounded away from $0$. The right hand
side of equation \eqref{eq:dual-norm-var-cons-err} gives rise to
the definition of a ``dual'' $H_{\cT}$-norm which we denote
\begin{equation}
\norm u_{H_{\cT,\kappa}^{-}}:=\sum_{i\sim j}\frac{m_{ij}}{h_{ij}}\kappa_{ij}\left(u_{j}-u_{i}\right)^{2}.\label{eq:norm-H-T-kappa-}
\end{equation}

With regard to \eqref{eq:general-conv-order} and Lemma \ref{lem:Gal-Herb-Vign},
the above considerations motivate the following definition. 
\begin{defn}[$\varphi$-consistency]
\label{def:consistent} Let $\cT_{h}=\left(\mathcal{V}_{h},\cE_{h},\cP_{h}\right)$
be a family of meshes with $\diam\cT_{h}\to0$ as $h\to0$. We say
that $\cT_{h}$ is $\varphi$-consistent (satisfies $\varphi$-consistency)
on the subspace $B\subset H_{0}^{1}\left(\Omega\right)$ if for every
$u\in B$ there exists $C\geq0$ such that for every $h>0$
\[
\sum_{\sigma_{ij}\in\cE_{h}}\frac{h_{ij}}{m_{ij}}\kappa_{ij}^{-1}\left|\int_{\sigma_{ij}}\kappa\nabla u\cdot\bnu_{ij}-\kappa_{ij}\frac{m_{ij}}{h_{ij}}\left(\left(\cR_{\cT_{h}}u\right)_{j}-\left(\cR_{\cT_{h}}u\right)_{i}\right)\right|^{2}\leq C\varphi\left(h\right)^{2}\,.
\]
\end{defn}

Hence, we immediately obtain the following.
\begin{prop}
Under the assumptions of Lemma \ref{lem:Gal-Herb-Vign} and assuming
$h_{ij}\leq Ch$ for some constant $C>0$ the mesh is $\varphi$-consistent
with $\varphi(h)=h$. We say that the mesh is $h$-consistent.
\end{prop}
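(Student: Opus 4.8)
The plan is to reduce the whole statement to the per-edge estimate of Lemma~\ref{lem:Gal-Herb-Vign} and then sum over the interfaces, so the argument is essentially bookkeeping with the constants $\kappa_{*},\kappa^{*}$ plus one genuinely geometric input. First I would fix $u\in B:=H^{2}(\Omega)\cap H^{1}_{0}(\Omega)$ and take the canonical interpolation $(\cR_{\cT_{h}}u)_{i}:=u(x_{i})$ as in the Remark following Definition~\ref{def:consistency}. Applying Lemma~\ref{lem:Gal-Herb-Vign} on each interface $\sigma_{ij}\in\cE_{h}$ and squaring gives
\[
\left|\int_{\sigma_{ij}}\kappa\nabla u\cdot\bnu_{ij}-\kappa_{ij}\frac{m_{ij}}{h_{ij}}\bigl((\cR_{\cT_{h}}u)_{j}-(\cR_{\cT_{h}}u)_{i}\bigr)\right|^{2}\leq C^{2}m_{ij}h_{ij}\norm u_{H^{2}(\Omega_{i}\cup\Omega_{j})}^{2}.
\]
Multiplying by the weight $\tfrac{h_{ij}}{m_{ij}}\kappa_{ij}^{-1}$ appearing in Definition~\ref{def:consistent}, the interface mass $m_{ij}$ cancels exactly and the right-hand side becomes $C^{2}\kappa_{ij}^{-1}h_{ij}^{2}\norm u_{H^{2}(\Omega_{i}\cup\Omega_{j})}^{2}$.

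Second, I would bound $\kappa_{ij}^{-1}$ by a constant depending only on $\kappa_{*}$ and $\kappa^{*}$. By property (iv) of Definition~\ref{def:admissible-grid}, the foot of the orthogonal line $D_{ij}$ on $\sigma_{ij}$ lies between $x_{i}$ and $x_{j}$, so $d_{i,ij}+d_{j,ij}=h_{ij}$. Hence the denominator in \eqref{eq:kappa-ij} is a convex combination of $\bar\kappa_{i},\bar\kappa_{j}\in[\kappa_{*},\kappa^{*}]$ and therefore lies in $[\kappa_{*},\kappa^{*}]$, while the numerator $\bar\kappa_{i}\bar\kappa_{j}$ lies in $[\kappa_{*}^{2},(\kappa^{*})^{2}]$; thus $\kappa_{ij}\geq\kappa_{*}^{2}/\kappa^{*}$, i.e. $\kappa_{ij}^{-1}\leq\kappa^{*}/\kappa_{*}^{2}$. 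Together with the hypothesis $h_{ij}\leq Ch$ this yields, on every edge,
\[
\frac{h_{ij}}{m_{ij}}\kappa_{ij}^{-1}\left|\int_{\sigma_{ij}}\kappa\nabla u\cdot\bnu_{ij}-\kappa_{ij}\frac{m_{ij}}{h_{ij}}\bigl((\cR_{\cT_{h}}u)_{j}-(\cR_{\cT_{h}}u)_{i}\bigr)\right|^{2}\leq C'h^{2}\norm u_{H^{2}(\Omega_{i}\cup\Omega_{j})}^{2},
\]
with $C'$ independent of $h$.

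Third, I would sum over $\sigma_{ij}\in\cE_{h}$. Writing $\norm u_{H^{2}(\Omega_{i}\cup\Omega_{j})}^{2}=\norm u_{H^{2}(\Omega_{i})}^{2}+\norm u_{H^{2}(\Omega_{j})}^{2}$, each cell contributes its $H^{2}$-energy once per neighbour, so $\sum_{\sigma_{ij}\in\cE_{h}}\norm u_{H^{2}(\Omega_{i}\cup\Omega_{j})}^{2}\leq\bigl(\max_{i}\#\{j:j\sim i\}\bigr)\norm u_{H^{2}(\Omega)}^{2}$. Under the standing mesh regularity (convex polytopal cells, $d\in\{2,3\}$, and $h_{ij}$ uniformly bounded from above and below), the number of neighbours of a cell is bounded uniformly in $h$ by some $M$, and putting the three steps together gives the inequality of Definition~\ref{def:consistent} with $\varphi(h)=h$ and $C=C'M\norm u_{H^{2}(\Omega)}^{2}$, which depends on $u$ but not on $h$, as required.

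The only point beyond routine algebra is this last one — the uniform bound on the overlap, equivalently on the number of neighbours per cell — which is precisely where the geometric regularity of the family $\cT_{h}$ and the restriction $d\leq3$ enter; the cancellation of $m_{ij}$ and the bound on $\kappa_{ij}^{-1}$ are immediate from the definitions. If one wished to avoid an explicit neighbour count one could instead argue cell-by-cell with $\kappa$ frozen on each cell, but some form of finite-overlap control is unavoidable because the consistency constant must be uniform in $h$.
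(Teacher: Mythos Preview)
Your proof is correct and is exactly the argument the paper has in mind: the proposition is stated right after the inequality \eqref{eq:dual-norm-var-cons-err} and Definition~\ref{def:consistent} with the words ``Hence, we immediately obtain the following'', so no proof is given beyond pointing back to Lemma~\ref{lem:Gal-Herb-Vign}. You have simply written out what ``immediately'' means --- the per-edge estimate, the cancellation of $m_{ij}$, the uniform bound on $\kappa_{ij}^{-1}$, and the finite-overlap sum --- and correctly flagged the neighbour-count bound as the one place where mesh regularity genuinely enters.
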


\subsection{\textit{\emph{\label{subsec:Consistency-on-cubic}Consistency on
cubic meshes}}}

For $d\leq3$, we consider a polygonal domain $\Omega\subset\R^{d}$
with a cubic mesh where $\Omega_{i}=x_{i}+[-h/2,h/2]^{d}$, $x_{i}\in h\mathbb{Z}\subset\Omega$,

Then we want to estimate the terms
\begin{align*}
m_{ij}\left|J_{ij}^{S}\hat{U}-\overline{J}_{ij}^{\star}U\right| & =S_{ij}\left|m_{ij}\kappa_{ij}\frac{\hat{U_{j}}-\hat{U}_{i}}{h}-\int_{\sigma_{ij}}\kappa\nabla U\cdot\nu_{ij}\right|\\
 & \leq S_{ij}\left|m_{ij}\kappa_{ij}\frac{\hat{U_{j}}-\hat{U}_{i}}{h}-\kappa_{ij}\int_{\sigma_{ij}}\nabla U\cdot\nu_{ij}\right|+S_{ij}\left|\int_{\sigma_{ij}}\left(\kappa_{ij}-\kappa\right)\nabla U\cdot\nu_{ij}\right|.
\end{align*}

In fact the following calculations are quite standard and, therefore,
we shorten our considerations.

Now, we want to estimate $\left|m_{ij}\frac{\hat{U_{j}}-\hat{U}_{i}}{h}-\int_{\sigma_{ij}}\nabla U\cdot\nu_{ij}\right|.$
We have $\hat{U}_{j}=U(x)+\nabla U\cdot(x_{j}-x)+O(h^{2})$ and $\hat{U}_{i}=U(x)+\nabla U\cdot(x_{i}-x)+O(h^{2})$.
Moreover, we can write $x_{i}-x=-\frac{h}{2}\nu_{ij}+\tilde{x}$ where
$\tilde{x}\perp\nu_{ij}$ and $x_{j}-x=\frac{h}{2}\nu_{ij}+\tilde{x}$
(the normal $\nu_{ij}$ points outside or inside of $\Omega_{i}$).
Hence, we conclude
\begin{align*}
\hat{U}_{j} & =U(x)+\nabla U\cdot(\frac{h}{2}\nu_{ij}+\tilde{x})+O(h^{2})\\
\hat{U}_{i} & =U(x)+\nabla U\cdot(-\frac{h}{2}\nu_{ij}+\tilde{x})+O(h^{2}).
\end{align*}
Subtracting both equations, we end up with $\frac{\hat{U}_{j}-\hat{U}_{i}}{h}=\nabla U\cdot\nu_{ij}+O(h^{2})$,
and hence,
\[
\left|m_{ij}\frac{\hat{U_{j}}-\hat{U}_{i}}{h}-\int_{\sigma_{ij}}\nabla U\cdot\nu_{ij}\right|\leq m_{ij}O(h^{2}).
\]

\begin{thm}[Consistency on cubic meshes]
\label{thm:ConsistencyCubicGrid} Let $\Omega\subset\R^{d}$ with
$d\leq3$ be a polygonal domain with a cubic mesh where $\Omega_{i}=x_{i}+[-h/2,h/2]^{d}$,
$x_{i}\in h\mathbb{Z}\subset\Omega$ and let $\kappa\equiv1$. Then
\[
\norm{\fE_{\cT}\left(u;\cdot\right)}_{H_{\cT}^{\ast}}\leq Ch^{2}.
\]
\end{thm}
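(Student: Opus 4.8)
The plan is to insert a sharpened per-face truncation estimate into the dual-norm bound. First I would record that on the cubic mesh $m_{ij}=h^{d-1}$, $h_{ij}=h$, and, since $\kappa\equiv1$ forces $\bar\kappa_i=\bar\kappa_j=1$ and $d_{i,ij}=d_{j,ij}=h/2$, formula \eqref{eq:kappa-ij} gives $\kappa_{ij}=1$. Hence $\kappa_{ij}-\kappa\equiv0$, the face-averaging contribution in the splitting preceding the theorem drops out, and only the geometric term
\[
e_{ij}:=\int_{\sigma_{ij}}\nabla u\cdot\nu_{ij}-\frac{m_{ij}}{h_{ij}}\bigl(u(x_j)-u(x_i)\bigr)
\]
survives. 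Combining \eqref{eq:var-cons-err-FV} with the Cauchy--Schwarz step behind \eqref{eq:dual-norm-var-cons-err} (now with $\kappa_{ij}=1$) reduces the theorem to the estimate
\[
\norm{\fE_\cT(u;\cdot)}_{H_\cT^\ast}^2\le\sum_{i\sim j}\frac{h_{ij}}{m_{ij}}\,e_{ij}^2=\sum_{i\sim j}h^{2-d}\,e_{ij}^2 ,
\]
so everything hinges on showing $|e_{ij}|\le Ch^{d+1-d/2}|u|_{H^3(\Omega_i\cup\Omega_j)}$ (equivalently $|e_{ij}|=O(h^{d+1})$ for classically smooth $u$).

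The key point — and the source of the extra power of $h$ relative to Lemma~\ref{lem:Gal-Herb-Vign} — is that on a cubic mesh the face centre $c_{ij}$ plays a double role: the segment $[x_i,x_j]$ meets $\sigma_{ij}$ exactly there, so $x_i=c_{ij}-\tfrac{h}{2}\nu_{ij}$ and $x_j=c_{ij}+\tfrac{h}{2}\nu_{ij}$, and $c_{ij}$ is also the midpoint quadrature node of the $(d-1)$-cube $\sigma_{ij}$. I would exploit this twice. Expanding $u$ about $c_{ij}$, the even-order terms cancel in the symmetric difference, giving $u(x_j)-u(x_i)=h\,\partial_{\nu_{ij}}u(c_{ij})+O(h^3)$ and hence $\frac{m_{ij}}{h_{ij}}(u(x_j)-u(x_i))=m_{ij}\partial_{\nu_{ij}}u(c_{ij})+O(h^{d+1})$; and since the midpoint rule on $\sigma_{ij}$ is second-order accurate and $\sigma_{ij}$ is symmetric about $c_{ij}$, $\int_{\sigma_{ij}}\nabla u\cdot\nu_{ij}=m_{ij}\partial_{\nu_{ij}}u(c_{ij})+O(m_{ij}h^2)$. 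Subtracting yields $|e_{ij}|=O(h^{d+1})$. To avoid requiring pointwise third derivatives I would instead rescale $\Omega_i\cup\Omega_j$ to a unit reference patch, where $e_{ij}=h^{d-2}\hat e$ and $\hat e$ is a functional that is bounded on $H^3$ (point values and traces are controlled because $d\le3$) and that annihilates every polynomial of degree $\le2$ — precisely because both $\int_{\hat\sigma}\hat\partial_\nu(\cdot)$ and the rescaled two-point difference are exact on quadratics, using the symmetry above. The Bramble--Hilbert lemma then gives $|\hat e|\le C|\hat u|_{H^3}$, and rescaling back produces the stated bound.

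It then remains to sum over faces. Since $\Omega$ is bounded there are at most $Ch^{-d}$ interior faces and the patches $\Omega_i\cup\Omega_j$ have bounded overlap, so
\[
\norm{\fE_\cT(u;\cdot)}_{H_\cT^\ast}^2\le\sum_{i\sim j}h^{2-d}\,e_{ij}^2\le Ch^{2-d}\sum_{i\sim j}h^{2d+2-d}\,|u|_{H^3(\Omega_i\cup\Omega_j)}^2\le Ch^{4}\,|u|_{H^3(\Omega)}^2 ,
\]
which is the claim $\norm{\fE_\cT(u;\cdot)}_{H_\cT^\ast}\le Ch^2$; in the terminology of Definition~\ref{def:consistent} the cubic mesh is $\varphi$-consistent with $\varphi(h)=h^2$. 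Boundary faces $\sigma\in\cE_\partial$ would be treated by the same expansion, now using $u|_{\partial\Omega}=0$, and contribute terms of the same order.

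The hard part is the per-face estimate $|e_{ij}|=O(h^{d+1})$: it genuinely needs \emph{both} that $x_i,x_j$ lie symmetrically about the face centre and that this same point is the node of the midpoint quadrature on $\sigma_{ij}$, and it is exactly this double symmetry — not available on a general admissible mesh — that lifts the $\varphi(h)=h$ consistency of Lemma~\ref{lem:Gal-Herb-Vign} to $\varphi(h)=h^2$. The only price is that the constant now depends on $u$ via a third-order (semi)norm rather than the $H^2$-norm. If a nontrivial Stolarsky weight $S_{ij}$ is present it factors out of the face, and the additional error from approximating $\pi|_{\sigma_{ij}}$ by $S_{ij}$ would be estimated separately (cf.\ Theorem~\ref{thm:ConsistencyError}); on cubic meshes it is likewise of order $h^2$.
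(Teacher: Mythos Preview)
Your proposal is correct and follows essentially the same route as the paper: exploit the double symmetry of the cubic mesh (that $x_i,x_j$ lie symmetrically about the face centre and that $\sigma_{ij}$ is itself centrally symmetric) to upgrade the per-face truncation error to $|e_{ij}|=m_{ij}O(h^2)=O(h^{d+1})$, and then insert this into the dual-norm bound \eqref{eq:dual-norm-var-cons-err} and sum. Your version is in fact more carefully argued than the paper's sketch --- in particular, your use of a reference-patch rescaling plus Bramble--Hilbert to obtain an $H^3$-seminorm bound (rather than relying on pointwise Taylor remainders) is a clean way to make rigorous what the paper leaves as ``standard calculations''.
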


We will consider a general $\kappa$ in Section \ref{subsec:Qualitative-comparison-on-CM}
below.

\section{Derivation of the methods and formal comparison}

In this section, we recall the original derivation of the Scharfetter--Gummel
scheme and then show that both the SG and the SQRA scheme are members
of a huge family of discretization schemes. Finally, we provide a
physically motivated derivation of the SQRA scheme.

\subsection{\label{subsec:Motivation-of-SG}Motivation of the Scharfetter--Gummel
scheme}

\paragraph*{One dimensional case}

The Scharfetter--Gummel scheme for the discrete flux on the interval
$[0,h]$ is derived under the assumption of constant flux $J$, force
$q=-\mathrm{d}V/\mathrm{d}x$ and diffusion coefficient $\kappa$
on $[0,h]$. We consider the two-point boundary value problem
\begin{equation}
J =-\kappa\left(\frac{\mathrm{d}u}{\mathrm{d}x}-qu\right)\quad\text{on }[0,h],
\qquad u\left(0\right) =u_{0}, 
\qquad u\left(h\right) =u_{h},\label{eq:Ex-Weight-Flux-scheme}
\end{equation}

where $q:[0,h]\rightarrow\R$ describes the constant force inducing
the drift current (i.e., the potential $V$ is a linear function on
$[0,h]$) and $\kappa>0$ is a constant, positive diffusion coefficient.
The problem has an elementary solution of the form
\[
u\left(x\right)=\frac{J}{\kappa q}+\left(u_{0}-\frac{J}{\kappa q}\right)\e^{qx}.
\]
Using the second boundary value $u(h)=u_{h}$, we get an explicit
form for the flux
\begin{equation}
J=\frac{\kappa}{h}\left(u_{0}B\left(-qh\right)-u_{h}B\left(qh\right)\right),\label{eq:J-for-SG}
\end{equation}
where $B(r)=r/\left(\e^{r}-1\right)$ is the Bernoulli function. Finally,
using $q=-\left(V_{h}-V_{0}\right)/h$, we can equally write 
\begin{equation}
J=-\frac{\kappa}{h}\frac{V_{h}-V_{0}}{\e^{V_{h}}-\e^{V_{0}}}\left(\frac{u_{h}}{\pi_{h}}-\frac{u_{0}}{\pi_{0}}\right).\label{eq:discr-flux}
\end{equation}

\paragraph*{Higher dimensional case}

In higher dimensions, the flux between two neighboring cells $j\sim i$
is discretized along the same lines as in the one dimensional case
(i.e., assumption of constant force, flux and diffusion constant along
each edge of the mesh). We project the flux $\mathbf{J}$ on the edge
$\mathbf{h}_{ij}:=x_{j}-x_{i}$
\[
\mathbf{h}_{ij}\cdot\mathbf{J}=-\kappa_{ij}\left(\mathbf{h}_{ij}\cdot\nabla u+u\mathbf{h}_{ij}\cdot\nabla V\right),
\]
where the assumption of a linear affine potential (inducing the constant
force $\mathbf{q}=-\nabla V$) implies that $\mathbf{h}_{ij}\cdot\nabla V=V_{i}-V_{j}$.
Moreover, we write $u\left(x\right)=u\left(x\left(s\right)\right)$
with $x\left(s\right)=sx_{i}+\left(1-s\right)x_{j}$,
where $0\leq s\leq1$ parametrizes the position on the edge. Then,
with $\mathrm{d}u/\mathrm{d}s=\mathbf{h}_{ij}\cdot\nabla u$ and $\mathbf{h}_{ij}\cdot\mathbf{J}=h_{ij}J_{ij}$,
we arrive at the two-point boundary problem
\[
h_{ij}J_{ij}=-\kappa_{ij}\left(\frac{\mathrm{d}u}{\mathrm{d}s}+u\left(V_{i}-V_{j}\right)\right)\quad\text{on }s\in\left[0,1\right],\qquad u\left(0\right)=u_{j},\qquad u\left(1\right)=u_{i},
\]
which is equivalent to the one dimensional problem.~\eqref{eq:Ex-Weight-Flux-scheme}.
The solution reads
\[
J_{ij}=\frac{\kappa_{ij}}{h_{ij}}\left(u_{j}B\left(V_{i}-V_{j}\right)-u_{i}B\left(-\left(V_{i}-V_{j}\right)\right)\right),
\]
which can also be written as
\[
J_{ij}=-\frac{\kappa_{ij}}{h_{ij}}\frac{V_{i}-V_{j}}{e^{V_{i}}-e^{V_{j}}}\left(\frac{u_{i}}{\pi_{i}}-\frac{u_{j}}{\pi_{j}}\right).
\]

\begin{rem}
In case of a sufficiently fine discretization that accurately takes
into account the structure of $V$, we can expect that $\left|V_{j}-V_{i}\right|\ll1$
is small such that $\frac{V_{i}-V_{j}}{\e^{V_{i}}-\e^{V_{j}}}\approx\sqrt{\pi_{i}\pi_{j}}+O\left(\pi_{i}-\pi_{j}\right)^{2}$.
We then infer 
\[
J_{ij}=-\frac{\kappa_{ij}}{h_{ij}}\sqrt{\pi_{i}\pi_{j}}\left(\frac{u_{j}}{\pi_{j}}-\frac{u_{i}}{\pi_{i}}\right)\,,
\]
which is the flux discretization according to the SQRA scheme. This
becomes more clear in the following sections.
\end{rem}

\begin{rem}[Motivation of discretized diffusion coefficient $\kappa_{ij}$]

Considering inhomogeneous media, where the diffusion coefficient is
not necessarily constant, a suitable discretization for the diffusion
$\kappa_{ij}$ is needed. Let us neglect for a moment the drift $\mathbf{q}u$
and assume that we have $\kappa_{i}$ on $\Omega_{i}$ around $x_{i}$
and $\kappa_{j}$ on $\Omega_{j}$ around $x_{j}$. We compute the
flux $J_{ij}$ from $x_{i}$ to $x_{j}$. Let $x_{0}$ be the intersection
of $\mathbf{h}_{ij}$ and $\sigma_{ij}$, and moreover, $d_{i}=|x_{0}-x_{i}|$
and $d_{j}=|x_{j}-x_{0}|$. The density at $x_{0}$ is denoted by
$u_{0}$. The flux $J_{i}$ from $x_{i}$ to $x_{0}$ is then given
by $J_{i}=-\kappa_{i}\frac{u_{0}-u_{i}}{d_{i}}$ and the flux $J_{j}$
from $x_{0}$ to $x_{j}$ is then given by $J_{j}=-\kappa_{j}\frac{u_{j}-u_{0}}{d_{j}}$.
The flux from $x_{i}$ to $x_{j}$ is then given by $J_{ij}=-\kappa_{ij}\frac{u_{j}-u_{i}}{d_{i}+d_{j}}$.
Hence, we have

\[
J_{ij}=-\frac{\kappa_{ij}}{d_{i}+d_{j}}\left(u_{j}-u_{0}+u_{0}-u_{i}\right)=\frac{\kappa_{ij}}{d_{i}+d_{j}}\left(\frac{d_{i}}{\kappa_{i}}J_{i}+\frac{d_{j}}{\kappa_{j}}J_{j}\right).
\]

Kirchhoff's law says $J_{i}=J_{j}=J_{ij}$, which implies that $1=\frac{\kappa_{ij}}{d_{i}+d_{j}}\left(\frac{d_{i}}{\kappa_{i}}+\frac{d_{j}}{\kappa_{j}}\right)$
and hence the weighted harmonic mean
\[
\kappa_{ij}=\frac{(d_{i}+d_{j})}{\frac{d_{i}}{\kappa_{i}}+\frac{d_{j}}{\kappa_{j}}}=\frac{1}{\frac{1}{\kappa_{i}}\frac{d_{i}}{d_{i}+d_{j}}+\frac{1}{\kappa_{j}}\frac{d_{j}}{d_{i}+d_{j}}}.
\]

Note that $1/\kappa$ is the mobility and hence we conclude $\frac{1}{\kappa_{ij}}=\frac{1}{\kappa_{i}}\frac{d_{i}}{d_{i}+d_{j}}+\frac{1}{\kappa_{j}}\frac{d_{j}}{d_{i}+d_{j}}$,
i.e. the arithmetic mean of the mobilities $1/\kappa_{i}$ and $1/\kappa_{j}$.

Interestingly, the harmonic mean is yet another special case of Stolarsky
means (see below) for $\alpha=-2$ and $\beta=-1$. Thus classical
FV discretizations of classical elliptic problems based on discretizations
of $-\Delta$ are another particular case of our general study.
\end{rem}

\subsection{\label{subsec:A-family-of-schemes}A family of discretization schemes}

Repeating the above calculations from a different point of view reveals
some additional structure of the Scharfetter--Gummel scheme and puts
it into a broader context.

Taking into account the special structure of the Fokker--Planck equation
in \eqref{eq:Ex-Weight-Flux-scheme}, we solve
\begin{align*}
\frac{1}{\kappa}J & =-\left(u'\left(x\right)+u\left(x\right)V'\left(x\right)\right),\qquad u(0)=u_{0},\ u(h)=u_{h},
\end{align*}
for a general potential $V:[0,h]\rightarrow\R$ not necessarily assumed
to be affine. The general solution reads
\[
u(x)=-\left(\frac{1}{\kappa}J\int_{0}^{x}\e^{V}+u_{0}\e^{V_{0}}\right)\e^{-V(x)}.
\]
The flux can be computed explicitly from the assumption $J=\mathrm{const}.$
and setting $x=h$ in the above formula. This yields
\[
J=-\kappa\frac{u_{h}\e^{V_{h}}-u_{0}\e^{V_{0}}}{\int_{0}^{h}\e^{V}}=-\kappa\frac{1}{h}\left(\frac{1}{h}\int_{0}^{h}\pi^{-1}\right)^{-1}\left(\frac{u_{h}}{\pi_{h}}-\frac{u_{0}}{\pi_{0}}\right)=-\kappa\pi_{\mathrm{mean}}\frac{1}{h}\left(\frac{u_{h}}{\pi_{h}}-\frac{u_{0}}{\pi_{0}}\right)
\]
for the averaged $\pi_{\mathrm{mean}}=\left(\frac{1}{h}\int_{0}^{h}\pi^{-1}\right)^{-1}$,
which clearly determines the constant flux along the edge. In particular,
assuming that $V$ is affine, i.e. $V(x)=\frac{V_{h}-V_{0}}{x_{h}-x_{0}}\left(x-x_{0}\right)+V_{0}$,
one easily checks that $\pi_{\mathrm{mean}}=\left(V_{h}-V_{0}\right)/\left(\e^{V_{h}}-\e^{V_{0}}\right)$,
which is the mean corresponding to the Scharfetter--Gummel discretization.
However, a potential can also be approximated not by piecewise affine
interpolation but in other ways, resulting in different means $\pi_{\mathrm{mean}}$.
We provide an example of such an approximation for the SQRA in the
Appendix~\ref{subsec:Approximation-of-potential}.

We aim to express $\pi_{\mathrm{mean}}$ by means of the values $\pi_{0}$
and $\pi_{h}$ at the boundaries. The choice of this average is non-trivial
and determines the quality of the discretization
scheme, as we will see below. In the present work, we focus on the
(weighted) Stolarsky mean, although there are also other means like
general $f$-means ($M_{f}(x,y)=f\left(\frac{f^{-1}(x)+f^{-1}(y)}{2}\right)$
for a strictly increasing function $f$). The Stolarsky mean has the
advantage that it is a closed formula for a broad family of popular
means and that its derivatives can be computed explicitly.

The weighted Stolarsky mean $S_{\alpha,\beta}$ \cite{Stolarsky1975mean}
is given as
\begin{equation}
S_{\alpha,\beta}(x,y)=\left(\frac{\beta(x^{\alpha}-y^{\alpha})}{\alpha(x^{\beta}-y^{\beta})}\right)^{\frac{1}{\alpha-\beta}}\,,\label{eq:Stolarsky-means}
\end{equation}
whenever these expressions are well defined and continuously extended
otherwise. We note the symmetry properties $S_{\alpha,\beta}\left(x,y\right)=S_{\alpha,\beta}\left(y,x\right)=S_{\beta,\alpha}\left(x,y\right)$.
Interesting special limit cases are $S_{0,1}(x,y)=\frac{x-y}{\log\left(x/y\right)}=\Lambda(x,y)$
(logarithmic mean), $S_{-1,1}(x,y)=\sqrt{xy}$ (geometric mean) and
$S_{0,-1}(x,y)=\frac{xy}{\Lambda(x,y)}$ (Scharfetter--Gummel mean).
A list of further Stolarsky means is given in Table~\ref{tab:Choices-for-S}.

An explicit calculation shows that $\partial_{x}^{2}S_{0,-1}\left(x,x\right)=-\left(3x\right)^{-1}$
and $\partial_{x}^{2}S_{-1,1}\left(x,x\right)=-\left(4x\right)^{-1}$.
For the general Stolarsky mean $S_{\alpha,\beta}$ one obtains (see
Appendix \ref{sec:Verification-of-eq:Stolarsky-second-deriv})
\begin{equation}
\partial_{x}^{2}S_{\alpha,\beta}\left(x,x\right)=\frac{1}{12x}\left(\alpha+\beta-3\right)\,,\label{eq:Stolarsky-second-deriv}
\end{equation}
particularly reproducing the above findings for $\partial_{x}^{2}S_{0,-1}$
and $\partial_{x}^{2}S_{-1,1}$. With respect to \eqref{eq:bernoulli-B}--\eqref{eq:SQRA-B},
we observe that 
\begin{align}
B_{1}\left(V_{i}-V_{j}\right) & =\frac{V_{i}-V_{j}}{\e^{V_{i}-V_{j}}-1}=S_{0,-1}\left(\pi_{i},\pi_{j}\right)\pi_{j}^{-1}\,,\label{eq:B-vs-S-SG}\\
B_{2}\left(V_{i}-V_{j}\right) & =\e^{-\frac{1}{2}\left(V_{i}-V_{j}\right)}=S_{-1,1}\left(\pi_{i},\pi_{j}\right)\pi_{j}^{-1}\,,\label{eq:B-vs-S-SQRA}
\end{align}
and \eqref{eq:FP-discr-Bernoulli} can be brought into the form \eqref{eq:Fokker-Planck-SQRA-residual},
which we equally write as 
\begin{equation}
-\sum_{j:\,j\sim i}\frac{m_{ij}}{h_{ij}}\kappa_{ij}S_{\ast}\left(\pi_{i},\pi_{j}\right)\left(\frac{u_{j}^{\cT}}{\pi_{j}}-\frac{u_{i}^{\cT}}{\pi_{i}}\right)=f_{i}\,,\label{eq:Stolarsky flow}
\end{equation}
where $S_{\ast}$ equals either $S_{0,-1}$ or $S_{-1,1}$. For general
means $S_{\alpha,\beta}\left(x,y\right)$, we have the relation
$S_{\alpha,\beta}\left(x,y\right)=x S_{\alpha,\beta}\left(1,y/x\right)$,
such that the weight function for arbitrary parameters $\alpha$ and
$\beta$ reads
\[
B_{\alpha,\beta}\left(x\right)=S_{\alpha,\beta}\left(1,\e^{-x}\right).
\]
In particular, it holds for any $\alpha$ and $\beta$
\[
B_{\alpha,\beta}\left(-x\right)=\e^{x}B_{\alpha,\beta}\left(x\right),
\]
which guarantees the consistency of the scheme with the thermodynamic
equilibrium.

Interestingly, the derivation of the SQRA in Section 2.2 of \cite{LieFackWeb2013}
relies on the assumption that the flux through a FV-interface has
to be proportional to $\left(u_{j}^{\cT}/\pi_{j}-u_{i}^{\cT}/\pi_{i}\right)$
with the proportionality factor given by a suitable mean of $\pi_{i}$
and $\pi_{j}$. The choice of $S_{-1,1}$ in \cite{LieFackWeb2013}
seems arbitrary, yet it yields very good results \cite{weber2017fuzzy,fackeldey2019metastable,DonHeiWebKel2017}.

\begin{table}[h]
%
\renewcommand{\arraystretch}{1.7}
\begin{tabular}{>{\centering}m{5.19cm} >{\centering}m{4.2cm} >{\centering}m{1.5cm} >{\centering}m{1.5cm} >{\centering}m{1.5cm}}
\toprule
mean & weight $B(x)$ & $\alpha$ & $\beta$ & $\alpha+\beta$\tabularnewline
\hline 
max & $\begin{cases}
\e^{-x}, & x\leq0\\
1, & x>0
\end{cases}$ & $+\infty$ & $1$ & $+\infty$\tabularnewline
quadratic mean & $\sqrt{\frac{1}{2}\left(1+\e^{-2x}\right)}$ & $4$ & $2$ & $6$\tabularnewline
arithmetic mean & $\frac{1}{2}\left(1+\e^{-x}\right)$ & $2$ & $1$ & $3$\tabularnewline
logarithmic mean & $\frac{1}{x}\left(1-\e^{-x}\right)$ & $0$ & $1$ & $1$\tabularnewline
geometric mean (SQRA) & $\e^{-x/2}$ & $-1$ & $1$ & $0$\tabularnewline
Scharfetter--Gummel mean & $x/\left(\e^{x}-1\right)$ & $0$ & $-1$ & $-1$\tabularnewline
harmonic Mean & $2/\left(\e^{x}+1\right)$ & $-2$ & $-1$ & $-3$\tabularnewline
min & $\begin{cases}
\e^{x}, & x\leq0\\
1, & x>0
\end{cases}$ & $-\infty$ & $1$ & $-\infty$\tabularnewline
\bottomrule
\end{tabular}
\caption{\label{tab:Choices-for-S}Examples for popular mean values expressed
as (weighted) Stolarsky means $S_{\alpha,\beta}$ with corresponding 
weight functions in \eqref{eq:FP-discr-Bernoulli} that generalize the Bernoulli function. The geometric
mean corresponds to the SQRA, the $S_{0,-1}$-mean to the Scharfetter--Gummel
discretization.}
\label{tab: stolarsky means}
\end{table}

\subsection{\label{subsec:The-Wasserstein-gradient}The Wasserstein gradient
structure of the Fokker--Planck operator and the SQRA method}

The choice of $S_{\ast}$ turns out to be crucial for the convergence
properties. In this section, we look at physical structures which
are desirable to be preserved in the discretization procedure. Our
considerations are based on the variational structure of the Fokker--Planck
equation. Let us note at this point that a physically reasonable discretization
is not necessarily the best from the rate of convergence point of
view. Indeed, this last point will be underlined by numerical simulations
in Section~\ref{sec:Interpretation-and-simulation}. However, the
physical consideration is helpful to understand the family of Stolarsky
discretizations from a further, different point of view.

In \cite{jordan1998variational} it was proved that the Fokker--Planck
equation
\begin{equation}
\dot{u}=\nabla\cdot\left(\kappa\nabla u+\kappa u\nabla V\right)\label{eq:FP-Gradflow}
\end{equation}
has the gradient flow formulation $\dot{u}=\partial_{\xi}\Psi^{*}(u,-\rmD E(u))$
where
\begin{equation}
E(u)=\int_{\Omega} \left(u\log u+Vu-u+1\right)=\int_{\Omega} \left(u\log\left(\frac{u}{\pi}\right)-u+1\right)\,,\qquad\Psi^{\ast}(u,\xi)=\frac{1}{2}\int_{\Omega}\kappa u\left|\nabla\xi\right|^{2}\,,\label{eq:FP-Gradflow-E-Psi}
\end{equation}
and $\pi=\e^{-V}$ is the stationary solution of \eqref{eq:FP-Gradflow}.
Indeed, one easily checks that $\rmD E(u)=\log u+V=\log\left(\frac{u}{\pi}\right)$
and $\partial_{\xi}\Psi^{\ast}(u,\xi)=-\nabla\cdot(\kappa u\nabla\xi)$
such that it formally holds
\[
\partial_{\xi}\Psi^{\ast}(u,\xi)|_{\xi=-\mathrm{D}E(u)}=-\nabla\cdot(\kappa u\nabla\xi)|_{\xi=-\mathrm{D}E(u)}=\nabla\cdot\left(\kappa u\left(\frac{\nabla u}{u}+\nabla V\right)\right)=\nabla\cdot\left(\kappa\nabla u+\kappa u\nabla V\right)=\dot{u}.
\]
Given a particular partial differential equation, the gradient structure
might not be unique. For example, the simple parabolic equation $\partial_{t}u=\Delta u$
can be described by \eqref{eq:FP-Gradflow-E-Psi} with $V=0$. But
at the same time one might choose $E(u)=\int u^{2}$ with $\Psi^{\ast}\left(\xi\right)=\int\left|\nabla\xi\right|^{2}$,
which plays a role in phase field modeling (see \cite{HeidaMalekRajagopal2011b}
and references therein) or $E(u)=-\int\log u$ with $\Psi^{\ast}\left(\xi\right)=\int u^{2}\left|\nabla\xi\right|^{2}$.

In view of this observation, one might pose the question about ``natural''
gradient structures of the discretization schemes. This is reasonable
if one believes that discretization schemes should incorporate the
underlying physical principles. The energy functional is clearly prescribed
by \eqref{eq:FP-Gradflow-E-Psi} with the natural discrete equivalent
\begin{equation}
E_{\cT}(u)=\sum_{i}m_{i}\left(u_{i}\log\left(\frac{u_{i}}{\pi_{i}}\right)-u_{i}+1\right)\,.\label{eq:discr-gf-1}
\end{equation}
The discrete linear evolution equation can be expected to be linear.
Since we identified the continuous flux to be $\mathbf{J}=-\kappa\pi\nabla U$
with $U=u/\pi$, we expect the form 
\begin{equation}
\dot{u}_{i}m_{i}=\partial_{\xi}\Psi_{\cT}^{*}(u,-\rmD E_{\cT}(u))=\sum_{j:i\sim j}\frac{m_{ij}}{h_{ij}}\kappa_{i,j}\pi_{ij}\left(\frac{u_{j}}{\pi_{j}}-\frac{u_{i}}{\pi_{i}}\right)\label{eq:Lin-evol-eq-a}
\end{equation}
for some suitably averaged $\pi_{ij}$. Equation \eqref{eq:Lin-evol-eq-a}
can be understood as a time-reversible (or detailed balanced) Markov
process on the finite state space ${\mathcal P}$. Recently, various different
gradient structures have been suggested for \eqref{eq:Lin-evol-eq-a}:
\cite{Miel11GSRD,Maas11GFEF,ErbMaa12RCFM,CHLZ12FPEF,Miel13GCRE} for
a quadratic dissipation as a generalization of the Jordan--Kinderlehrer--Otto
approach; and \cite{MiPeRe14RGFL,MPPR17NETP}, where a dissipation
of cosh-type was appeared in the Large deviation rate functional for
a hydrodynamic limit of an interacting particle system. All of them
can be written in the abstract form
\begin{equation}
\Psi_{\cT}^{\ast}(u,\xi)=\frac{1}{2}\sum_{i}\frac{1}{m_{i}}\sum_{j:i\sim j}\frac{m_{ij}}{h_{ij}}S_{ij}a_{ij}(u,\pi)\mathsf{\psi^{*}}\left(\xi_{i}-\xi_{j}\right)\,,\label{eq:S_ij--general-Psi}
\end{equation}
where 
\begin{equation}
a_{ij}(u,\pi)=\left(\frac{u_{i}}{\pi_{i}}-\frac{u_{j}}{\pi_{j}}\right)\partial_{\xi}\mathsf{\psi^{*}}\left(\log\left(\frac{u_{i}}{\pi_{i}}\right)-\log\left(\frac{u_{j}}{\pi_{j}}\right)\right)^{-1}\,.\label{eq:S_ij--general-a}
\end{equation}

In fact, any positive and convex function $\psi^{*}$ defines a reasonable
dissipation functional $\Psi^{*}$ by \eqref{eq:S_ij--general-Psi}
and \eqref{eq:S_ij--general-a}. A special case is when choosing for
$\psi^{*}$ and exponentially fast growing function $\psi^{*}(r):=\mathsf{C^{*}}(r):=2\left(\cosh(r/2)-1\right)$.
Then $a_{ij}$ simplifies to 
\[
a_{ij}(u,\pi)=\sqrt{\frac{u_{i}u_{j}}{\pi_{i}\pi_{j}}},
\]
and hence, the square root appears. Choosing $S_{ij}=\sqrt{\pi_{i}\pi_{j}}$,
we end up with a dissipation functional of the form

\begin{equation}
\Psi_{\cT}^{\ast}(u,\xi)=\sum_{i}\sum_{j:i\sim j}m_{ij}h_{ij}\sqrt{u_{i}u_{j}}\,\frac{1}{h_{ij}^{2}}\mathsf{C^{*}}\left(\xi_{i}-\xi_{j}\right)\,.\label{eq:discr-gf-2}
\end{equation}

There are (at least) three good reasons why choosing this gradient
structure, i.e., modeling fluxes in exponential terms: a historical,
a mathematical and a physical:
\begin{enumerate}
\item Already in Marcelin's PhD thesis from 1915 (\cite{Marc15CECP}) exponential
reaction kinetics have been derived, which are still common in chemistry
literature.
\item Recently, convergence for families of gradient systems has been derived
based on the energy-dissipation principle (the so-called EDP-convergence
\cite{Miel16EGCG,LMPR17MOGG,DoFrMi18GSWE}). Vice versa, the above
cosh-gradient structure appears as an effective gradient structure
applying EDP-convergence to Wasserstein gradient flow problems \cite{LMPR17MOGG,FreLie19?EDTS}.
\item Recalling the gradient structure for the continuous Fokker--Planck
equation \eqref{eq:FP-Gradflow-E-Psi}, we observe that the dissipation
mechanism $\Psi^{*}$ is totally independent of the particular form
of the energy ${\mathcal E}$, which is determined by the potential $V$.
This is physically understandable, since a change of the energy resulting,
e.g., from external fields should not influence the dissipation structure.
The same holds for the discretized version \eqref{eq:discr-gf-2}.
In fact it was shown in \cite{MieSte19ECLRS}, that the only discrete
gradient structure, where the dissipation does not depend on $V$
resp. $\pi=\e^{-V}$, is the cosh-gradient structure with the SQRA
discretization $S_{ij}=S_{-1,1}(\pi_{i},\pi_{j})$. In particular,
this characterizes the SQRA. For convenience, we add a proof for that
to the Appendix \ref{subsec:Proof-of-Theorem-Stolarsky-grad}.
\end{enumerate}
We think that these properties distinguish the SQRA, although in the
following the convergence proofs do not really rely on the particular
discretization weight $S_{ij}$.
\begin{rem}[Convergence of energy and dissipation functional]
Let us finally make some comments on the convergence of $E_{\mathcal{T}}$
and $\Psi_{\mathcal{T}}^{*}$ given in \eqref{eq:discr-gf-1} and
\eqref{eq:discr-gf-2} to the continuous analogies $E$ and $\Psi^{*}$.
$\Gamma$-convergence can be shown if the fineness of $\cT$ tends
to $0$. For the energies it is clear, since $u\mapsto u\log\left(u/\pi\right)-u$
is convex. For the dissipation potentials $\Psi_{\cT}^{\ast}(u,\xi)$
we observe the following: For smooth functions $u$ and $\xi$, we
have $\frac{1}{h_{ij}^{2}}\mathsf{C^{*}}\left(\xi_{i}-\xi_{j}\right)\approx\frac{1}{2}\left(\frac{x_{i}-x_{j}}{\left|x_{i}-x_{j}\right|}\cdot\nabla\xi\right)^{2}+O(h_{ij}^{2})$
and $\sqrt{u_{i}u_{j}}\approx u\left(\frac{1}{2}(x_{i}+x_{j})\right)$.
The considerations from Section \ref{subsec:Fluxes-and--spaces} then
yield $\Psi_{\cT}^{\ast}(u,\xi)\approx\frac{1}{2}\int_{\bQ}u\left|\nabla\xi\right|^{2}$.

For quadratic dissipation, qualitative convergence results in 1-D
using the underlying gradient structure are obtained in \cite{DisLie15GSMC}
looking at energy-dissipation mechanism, and in \cite{GKMP19HODDOT}
proving convergence of the metric.
\end{rem}

\section{\label{sec:Comparison-of-discretization}Comparison of discretization
schemes}

We mutually compare any two discretization schemes of the form \eqref{eq:Fokker-Planck-SQRA-residual}
in case of Dirichlet boundary conditions. In this case, even though
the problem is only defined on $\tilde{\cP}$, we can simply sum over
all $\cP$ once we multiplied with a test function that assumes the
value $0$ at all $\cP\backslash\tilde{\cP}$.

Let us recall the formula \eqref{eq:fluxes} for the fluxes
\[
J_{ij}^{S}U=-\frac{\kappa_{ij}}{h_{ij}}S_{ij}(U_{j}-U_{i}).
\]
Moreover, let $u_{i}=U_{i}\pi_{i}$ and $\tilde{u_{i}}=\tilde{U}_{i}\pi_{i}$
be the solution of the discrete FPE \eqref{eq:Fokker-Planck-SQRA-residual}
for two different smooth mean coefficients $S_{ij}=S(\pi_{i},\pi_{j})$
and $\tilde{S}_{ij}=\tilde{S}(\pi_{i},\pi_{j})$ (e.g. once for Scharfetter--Gummel
and once for SQRA) such that 
\begin{align}
\sum_{k:k\sim i}m_{ik}h_{ik}J_{ik}^{S}U & =m_{i}\bar{f}_{i}\label{eq:comparison-1}\\
\sum_{k:k\sim i}m_{ik}h_{ik}J_{ik}^{\tilde{S}}\tilde{U} & =m_{i}\bar{f_{i}}.\label{eq:comparison-2}
\end{align}
In order to compare the solutions of \eqref{eq:comparison-1} and
\eqref{eq:comparison-2} we take the difference of these two equations
and multiply with $E_{i}=U_{i}-\tilde{U_{i}}$. We obtain
\begin{align*}
0 & =\sum_{i}\sum_{k:k\sim i}m_{ik}h_{ik}\left(J_{ik}^{S}U-J_{ik}^{\tilde{S}}\tilde{U}\right)E_{i}\\
 & =\sum_{i}\sum_{k:\,k\sim i}\frac{m_{ik}}{h_{ki}}\kappa_{ij}(S_{ik}(U_{i}-U_{k})-\tilde{S}_{ik}(\tilde{U_{i}}-\tilde{U_{k}}))E_{i}
\end{align*}

Introducing the notation $\alpha_{ik}=\kappa_{ik}\frac{m_{ik}}{h_{ik}}$
and using \eqref{eq:general-disc-partial-int} we get
\begin{align*}
0 & =\sum_{k\sim i}\alpha_{ik}\left(S_{ik}(U_{i}-U_{k})-S_{ik}(\tilde{U_{i}}-\tilde{U_{k}})+\left(S_{ik}-\tilde{S}_{ik}\right)(\tilde{U_{i}}-\tilde{U_{k}})\right)\left(E_{i}-E_{k}\right)\\
 & =\sum_{k\sim i}\alpha_{ik}\left(S_{ik}\left(E_{i}-E_{k}\right)+(S_{ik}-\tilde{S}_{ik})\left(\tilde{U_{i}}-\tilde{U_{k}}\right)\right)\left(E_{i}-E_{k}\right).
\end{align*}
Using the notation $\rmD_{ik}A=A_{k}-A_{i}$ for discrete gradients
\[
(\tilde{S}_{ik}-S_{ik})\left(\tilde{U_{i}}-\tilde{U_{k}}\right)\left(E_{i}-E_{k}\right)\leq\frac{1}{2}\left[S_{ik}\left(\rmD_{ik}E\right)^{2}+\frac{(S_{ik}-\tilde{S}_{ik})^{2}}{S_{ik}}\left(\rmD_{ik}\tilde{U}\right)^{2}\right]
\]
we get
\begin{align}
\frac{1}{2}\sum_{k\sim i}\alpha_{ik}S_{ik}\left(\rmD_{ik}E\right)^{2} & \leq\frac{1}{2}\sum_{k\sim i}\frac{(\tilde{S}_{ik}-S_{ik})^{2}}{S_{ik}\tilde{S}_{ik}}\alpha_{ik}\tilde{S}_{ik}\left(\rmD_{ik}\tilde{U}\right)^{2}\,.\label{eq:estimate-difference-1}
\end{align}
In the case of Stolarsky means the constants are more explicit. We
have the following expansion of $S_{ij}$: writing $\pi_{ij}=\frac{1}{2}\left(\pi_{i}+\pi_{j}\right)$,
$\pi_{+}=\pi_{-}=\frac{1}{2}\left(\pi_{i}-\pi_{j}\right)$ and $\pi_{i}=\pi_{0}+\pi_{+}$
and $\pi_{j}=\pi_{0}-\pi_{-}$
\begin{align}
S_{ij} & =S_{\alpha,\beta}\left(\pi_{ij},\pi_{ij}\right)+\frac{1}{2}\left(\pi_{+}-\pi_{-}\right)+\frac{1}{2}\partial_{x}^{2}S_{\alpha,\beta}\left(\pi_{ij},\pi_{ij}\right)\left(\pi_{+}+\pi_{-}\right)^{2}+O\left(\pi_{\pm}^{3}\right)\nonumber \\
 & =\pi_{ij}+\frac{\frac{1}{3}\left(\alpha+\beta\right)-1}{8\,\pi_{ij}}\left(\pi_{i}-\pi_{j}\right)^{2}+O\left(\pi_{i}-\pi_{j}\right)^{3}\,.\label{eq:Sij-approx}
\end{align}

In case $\left(\alpha+\beta\right)=\left(\tilde{\alpha}+\tilde{\beta}\right)$,
we obtain $S_{ij}-\tilde{S}_{ij}=O\left(\pi_{i}-\pi_{j}\right)^{3}$
and hence this yields the following first comparison result:
\begin{thm}
Let $\cT$ be a mehs with right hand side $f\in L^{2}(\mathcal{P})$
and let $u$ and $\tilde{u}$ be a two solution of the discrete FPE
for different Stolarsky mean coefficients $S_{ij}=S_{\alpha,\beta}\left(\pi_{i},\pi_{j}\right)$
and $\tilde{S}_{ij}=S_{\tilde{\alpha},\tilde{\beta}}\left(\pi_{i},\pi_{j}\right)$
respectively. Then 
\begin{multline*}
\frac{1}{2}\sum_{k\sim i}\kappa_{ik}\frac{m_{ik}}{h_{ik}}S_{ik}\left(\rmD_{ik}E\right)^{2}\\
\leq\frac{1}{2}\sum_{k\sim i}\left(\frac{\left(\left(\alpha+\beta\right)-\left(\tilde{\alpha}+\tilde{\beta}\right)\right)^{2}}{24^{2}\,\pi_{ij}^{2}\tilde{S}_{ik}S_{ik}}\left(\pi_{i}-\pi_{j}\right)^{4}+O\left(\pi_{i}-\pi_{j}\right)^{5}\right)\kappa_{ik}\frac{m_{ik}}{h_{ik}}\left(\rmD_{ik}\tilde{U}\right)^{2}
\end{multline*}
In case $\left(\alpha+\beta\right)=\left(\tilde{\alpha}+\tilde{\beta}\right)$
we furthermore find 
\[
\frac{1}{2}\sum_{k\sim i}\kappa_{ik}\frac{m_{ik}}{h_{ik}}S_{ik}\left(\rmD_{ik}E\right)^{2}\leq\frac{1}{2}\sum_{k\sim i}O\left(\pi_{i}-\pi_{j}\right)^{6}\kappa_{ik}\frac{m_{ik}}{h_{ik}}\left(\rmD_{ik}\tilde{U}\right)^{2}\,.
\]
\end{thm}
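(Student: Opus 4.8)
The plan is to convert the two discrete equations into an energy identity for the difference $E_i := U_i - \tilde U_i$ and then to substitute the diagonal Taylor expansion of the Stolarsky mean; the pieces for this are already in place in \eqref{eq:comparison-1}--\eqref{eq:Sij-approx}, and I would assemble them as follows.

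First I would subtract \eqref{eq:comparison-2} from \eqref{eq:comparison-1}, multiply by $E_i$ and sum over all $x_i\in\cP$. Because of the Dirichlet condition $E_i$ vanishes on $\cP\setminus\tilde\cP$, so this is the same as summing over $\tilde\cP$ and no boundary terms appear. Setting $\alpha_{ik}:=\kappa_{ik}m_{ik}/h_{ik}$ and splitting
\[
S_{ik}(U_i-U_k)-\tilde S_{ik}(\tilde U_i-\tilde U_k)=S_{ik}\bigl((U_i-U_k)-(\tilde U_i-\tilde U_k)\bigr)+(S_{ik}-\tilde S_{ik})(\tilde U_i-\tilde U_k),
\]
the discrete integration-by-parts formula \eqref{eq:general-disc-partial-int} turns the double sum into a sum over edges and yields the identity
\[
0=\sum_{k\sim i}\alpha_{ik}\Bigl(S_{ik}\,\rmD_{ik}E+(S_{ik}-\tilde S_{ik})\,\rmD_{ik}\tilde U\Bigr)\rmD_{ik}E
\]
(using $\rmD_{ik}A=A_k-A_i$, the signs cancel pairwise) that is displayed just above \eqref{eq:estimate-difference-1}. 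I would then move the sign-definite term to the left and bound the cross term by Young's inequality applied to the pair $\tfrac{|S_{ik}-\tilde S_{ik}|}{\sqrt{S_{ik}}}\,|\rmD_{ik}\tilde U|$ and $\sqrt{S_{ik}}\,|\rmD_{ik}E|$; this absorbs one half of $\sum_{k\sim i}\alpha_{ik}S_{ik}(\rmD_{ik}E)^2$ and produces precisely \eqref{eq:estimate-difference-1}.

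Then I would insert the expansion \eqref{eq:Sij-approx} — which rests on the second-derivative formula \eqref{eq:Stolarsky-second-deriv} — for both $S_{ik}=S_{\alpha,\beta}(\pi_i,\pi_k)$ and $\tilde S_{ik}=S_{\tilde\alpha,\tilde\beta}(\pi_i,\pi_k)$. The constant and linear terms in $\pi_i-\pi_k$ are the same for both means, so only the second-order terms survive in the difference:
\[
\tilde S_{ik}-S_{ik}=\frac{(\alpha+\beta)-(\tilde\alpha+\tilde\beta)}{24\,\pi_{ik}}\,(\pi_i-\pi_k)^2+O\bigl((\pi_i-\pi_k)^3\bigr).
\]
Squaring and dividing by $S_{ik}\tilde S_{ik}$ — both of which lie between two fixed positive constants since $V_*\le V\le V^*$ keeps $\pi$, and hence every Stolarsky mean of two values of $\pi$, bounded away from $0$ and $\infty$ — turns the right-hand side of \eqref{eq:estimate-difference-1} into the stated bound, with prefactor $\bigl((\alpha+\beta)-(\tilde\alpha+\tilde\beta)\bigr)^2\big/\bigl(24^2\pi_{ik}^2 S_{ik}\tilde S_{ik}\bigr)$ times $(\pi_i-\pi_k)^4$, plus an $O((\pi_i-\pi_k)^5)$ remainder; this is the first inequality. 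For the second, when $\alpha+\beta=\tilde\alpha+\tilde\beta$ the second-order coefficients in \eqref{eq:Sij-approx} also coincide, hence $\tilde S_{ik}-S_{ik}=O((\pi_i-\pi_k)^3)$, its square is $O((\pi_i-\pi_k)^6)$, division by $S_{ik}\tilde S_{ik}$ preserves this order, and \eqref{eq:estimate-difference-1} gives the claim.

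The one point needing care is the uniformity of the Landau symbols in the last step: the remainder in \eqref{eq:Sij-approx} carries a constant depending on $\pi_{ik}$ and on higher diagonal derivatives of $S_{\alpha,\beta}$, so one has to invoke the global bounds $V_*\le V\le V^*$ to make it uniform over all edges of the mesh. Beyond that the argument is entirely routine — one discrete integration by parts, one application of Young's inequality, and the Taylor expansion of the mean.
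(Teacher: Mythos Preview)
Your proposal is correct and follows essentially the same route as the paper: subtract the two discrete equations, test with $E_i$, use \eqref{eq:general-disc-partial-int} to pass to the edge sum, split $S_{ik}(U_i-U_k)-\tilde S_{ik}(\tilde U_i-\tilde U_k)$ exactly as you do, apply Young's inequality to obtain \eqref{eq:estimate-difference-1}, and then substitute the diagonal expansion \eqref{eq:Sij-approx}. Your added remark on the uniformity of the $O$-constants (via $V_*\le V\le V^*$) is a useful point the paper leaves implicit; the only slip is a harmless sign in your formula for $\tilde S_{ik}-S_{ik}$, which disappears upon squaring.
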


We aim to refine the above result to an order of convergence result
for $J^{S}U-J^{\tilde{S}}\tilde{U}$.. We introduce the auxiliary
smooth mean $\hat{S}_{ik}=\hat{S}(\pi_{i},\pi_{k})$ and find
\begin{align*}
\hat{S}_{ik} & \left(E_{i}-E_{k}\right)=\hat{S}_{ik}\left(U_{i}-\tilde{U}_{i}-\left(U_{k}-\tilde{U}_{k}\right)\right)\\
 & =S_{ik}(U_{i}-U_{k})-S_{ik}(U_{i}-U_{k})+\tilde{S}_{ik}(\tilde{U}_{i}-\tilde{U}_{k})-\tilde{S}_{ik}(\tilde{U}_{i}-\tilde{U}_{k})+\hat{S}_{ik}\left(U_{i}-\tilde{U}_{i}-\left(U_{k}-\tilde{U}_{k}\right)\right)\\
 & =m_{ik}\alpha_{ik}^{-1}\left(J_{ik}^{S}U-J_{ik}^{\tilde{S}}\tilde{U}\right)+\left(\hat{S}_{ik}-S_{ik}\right)\left(U_{i}-U_{k}\right)-\left(\hat{S}_{ik}-\tilde{S}_{ik}\right)\left(\tilde{U}_{i}-\tilde{U}_{k}\right)\,.
\end{align*}

Hence, we have
\begin{align*}
 & \sum_{k\sim i}\alpha_{ik}\left(S_{ik}(U_{i}-U_{k})-\tilde{S}_{ik}\left(\tilde{U_{i}}-\tilde{U_{k}}\right)\right)\left(E_{i}-E_{k}\right)\\
 & \qquad=\sum_{k\sim i}\frac{h_{ik}}{\kappa_{ik}}m_{ik}\frac{1}{\hat{S}_{ik}}\left(J_{ik}^{S}U-J_{ik}^{\tilde{S}}\tilde{U}\right)^{2}\\
 & \text{\qquad\quad+\ensuremath{\sum_{k\sim i}m_{ik}\frac{1}{\hat{S}_{ik}}\left(J_{ik}^{S}U-J_{ik}^{\tilde{S}}\tilde{U}\right)\left[\left(\hat{S}_{ik}-S_{ik}\right)\left(U_{i}-U_{k}\right)+\left(\hat{S}_{ik}-\tilde{S}_{ik}\right)\left(\tilde{U}_{i}-\tilde{U}_{k}\right)\right]\,},}
\end{align*}
and using Cauchy-Schwartz inequality, we get 
\begin{align*}
 & \sum_{k\sim i}\alpha_{ik}\left(S_{ik}(U_{i}-U_{k})-\tilde{S}_{ik}\left(\tilde{U_{i}}-\tilde{U_{k}}\right)\right)\left(E_{i}-E_{k}\right)\leq-\frac{1}{2}\sum_{k\sim i}\frac{h_{ik}m_{ik}}{\kappa_{ik}}\frac{1}{\hat{S}_{ik}}\left(J_{ik}^{S}U-J_{ik}^{\tilde{S}}\tilde{U}\right)^{2}\\
 & \qquad\qquad+\sum_{k\sim i}\frac{m_{ik}\kappa_{ik}}{h_{ik}\hat{S}_{ik}}\left(\left(\hat{S}_{ik}-S_{ik}\right)^{2}\left(U_{i}-U_{k}\right)^{2}+\left(\hat{S}_{ik}-\tilde{S}_{ik}\right)^{2}\left(\tilde{U}_{i}-\tilde{U}_{k}\right)^{2}\right)\,.
\end{align*}
Altogether we obtain 
\begin{align*}
\frac{1}{2}\sum_{k\sim i}\frac{h_{ik}m_{ik}}{\kappa_{ik}}\frac{1}{\hat{S}_{ik}}\left(J_{ik}^{S}U-J_{ik}^{\tilde{S}}\tilde{U}\right)^{2} & \leq\sum_{k\sim i}\frac{m_{ik}h_{ik}}{\kappa_{ik}\hat{S}_{ik}S_{ik}^{2}}\left(\hat{S}_{ik}-S_{ik}\right)^{2}\left(\frac{\kappa_{ik}}{h_{ik}}S_{ik}\left(U_{i}-U_{k}\right)\right)^{2}\\
 & +\sum_{k\sim i}\frac{m_{ik}h_{ik}}{\kappa_{ik}\hat{S}_{ik}\tilde{S}_{ik}^{2}}\left(\hat{S}_{ik}-\tilde{S}_{ik}\right)^{2}\left(\frac{\kappa_{ik}}{h_{ik}}\tilde{S}_{ik}\left(\tilde{U}_{i}-\tilde{U}_{k}\right)\right)^{2}\,.
\end{align*}

We make once more use of \eqref{eq:Sij-approx} writing $C_{\alpha,\beta}:=\frac{\alpha+\beta}{24}$
and exploiting $\pi_{i}=\pi_{ij}+\pi_{ij}\left(V_{i}-V_{ij}\right)+O\left(V_{i}-V_{ij}\right)^{2}$
with 
\begin{align*}
\pi_{i}-\pi_{j} & \approx\pi_{ij}\left(V_{i}-V_{j}\right)+O\left(V_{i}-V_{ij}\right)^{2}+O\left(V_{j}-V_{ij}\right)^{2}\\
S_{ij} & \approx\pi_{ij}+O\left(\pi_{i}-\pi_{j}\right)\,.
\end{align*}
Hence, we conclude the following result.
\begin{thm}
\label{thm:ComparisonDifferentDiscretizationSchemes}Let $\cT$ be
a mesh with right hand side $f\in L^{2}(\mathcal{P})$ and let $u$
and $\tilde{u}$ be two solutions of the discrete FPE for different
Stolarsky means $S$ and $\tilde{S}$. Moreover, let $\hat{S}$ be
any Stolarsky mean and assume that either $\alpha+\beta\neq\hat{\alpha}+\hat{\beta}$
or $\tilde{\alpha}+\tilde{\beta}\neq\hat{\alpha}+\hat{\beta}$. Then
the solutions $u$ and $\tilde{u}$ of the discretized FPE satisfy
the symmetrized error estimate up to higher order 
\begin{align*}
\frac{1}{2}\sum_{k\sim i}\frac{h_{ik}m_{ik}}{\kappa_{ik}}\frac{1}{\hat{S}_{ik}}\left(J_{ik}^{S}U-J_{ik}^{\tilde{S}}\tilde{U}\right)^{2} & \leq\sum_{k\sim i}\frac{m_{ik}h_{ik}}{\kappa_{ik}S_{ik}}\left(C_{\alpha,\beta}-C_{\hat{\alpha},\hat{\beta}}\right)\left(V_{i}-V_{j}\right)^{2}\left(J_{ik}^{S}U\right)^{2}\\
 & +\sum_{k\sim i}\frac{m_{ik}h_{ik}}{\kappa_{ik}\tilde{S}_{ik}}\left(C_{\tilde{\alpha},\tilde{\beta}}-C_{\hat{\alpha},\hat{\beta}}\right)\left(V_{i}-V_{j}\right)^{2}\left(J_{ik}^{\tilde{S}}\tilde{U}\right)^{2}\,.
\end{align*}
\end{thm}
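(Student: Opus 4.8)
The plan is to assemble the identities established in the paragraphs immediately preceding the statement, so most of the mechanical work is already in place. First I would subtract the two discrete Fokker--Planck equations \eqref{eq:comparison-1} and \eqref{eq:comparison-2}, test the difference with the error $E_i:=U_i-\tilde U_i$ (admissible under Dirichlet data, since $E$ vanishes on $\cP\setminus\tilde\cP$), sum over all $i\in\cP$, and use the discrete integration-by-parts formula \eqref{eq:general-disc-partial-int}; with $\alpha_{ik}:=\kappa_{ik}m_{ik}/h_{ik}$ this produces the vanishing edge sum $\sum_{k\sim i}\alpha_{ik}\bigl(S_{ik}(U_i-U_k)-\tilde S_{ik}(\tilde U_i-\tilde U_k)\bigr)(E_i-E_k)=0$ displayed above.

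Then I would introduce the auxiliary smooth mean $\hat S_{ik}=\hat S(\pi_i,\pi_k)$, substitute the algebraic decomposition
\[
\hat S_{ik}(E_i-E_k)=m_{ik}\alpha_{ik}^{-1}\bigl(J_{ik}^{S}U-J_{ik}^{\tilde S}\tilde U\bigr)+(\hat S_{ik}-S_{ik})(U_i-U_k)-(\hat S_{ik}-\tilde S_{ik})(\tilde U_i-\tilde U_k)
\]
derived above, and apply Young's inequality to split off $\sum_{k\sim i}\frac{h_{ik}m_{ik}}{\kappa_{ik}\hat S_{ik}}(J_{ik}^{S}U-J_{ik}^{\tilde S}\tilde U)^2$ and absorb one half of it on the left. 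Using $\frac{\kappa_{ik}}{h_{ik}}S_{ik}(U_i-U_k)=J_{ik}^{S}U$ and its tilde analogue, this yields the intermediate estimate whose right-hand side is $\sum_{k\sim i}\frac{m_{ik}h_{ik}(\hat S_{ik}-S_{ik})^2}{\kappa_{ik}\hat S_{ik}S_{ik}^2}(J_{ik}^{S}U)^2+\sum_{k\sim i}\frac{m_{ik}h_{ik}(\hat S_{ik}-\tilde S_{ik})^2}{\kappa_{ik}\hat S_{ik}\tilde S_{ik}^2}(J_{ik}^{\tilde S}\tilde U)^2$.

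It then remains to insert the Stolarsky expansion \eqref{eq:Sij-approx}. Writing $C_{\alpha,\beta}=\frac{\alpha+\beta}{24}$, it gives $\hat S_{ik}-S_{ik}=(C_{\hat\alpha,\hat\beta}-C_{\alpha,\beta})(\pi_i-\pi_k)^2/\pi_{ik}+O\bigl((\pi_i-\pi_k)^3\bigr)$; combining this with $\pi_i-\pi_k=\pi_{ik}(V_i-V_k)+O\bigl((V_i-V_k)^2\bigr)$ and $S_{ik}=\pi_{ik}+O(\pi_i-\pi_k)$, each edge weight $(\hat S_{ik}-S_{ik})^2/(\kappa_{ik}\hat S_{ik}S_{ik}^2)$ reduces, up to higher-order corrections in $V_i-V_k$, to the coefficient of $(J_{ik}^{S}U)^2$ displayed on the right-hand side of the statement, and similarly for the tilde term; collecting the two sums then yields the asserted symmetrized estimate. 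The non-degeneracy hypothesis $\alpha+\beta\neq\hat\alpha+\hat\beta$ or $\tilde\alpha+\tilde\beta\neq\hat\alpha+\hat\beta$ is precisely what guarantees that at least one of $C_{\alpha,\beta}-C_{\hat\alpha,\hat\beta}$, $C_{\tilde\alpha,\tilde\beta}-C_{\hat\alpha,\hat\beta}$ is nonzero, so the displayed terms are the genuine leading contribution rather than being absorbed into the remainder.

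The step I expect to be the main obstacle is the uniform control of these remainders: the argument chains three Taylor expansions --- of $S_{\alpha,\beta}$ about the diagonal, of $\pi=\e^{-V}$ in terms of $V$, and of the denominator $S_{ik}$ about $\pi_{ik}$ --- and every resulting $O(\cdot)$ term must be bounded uniformly over all edges of the mesh. This is legitimate only once one knows $|V_i-V_k|$ is uniformly small, i.e.\ that the mesh resolves $V$, so that $|V_i-V_k|\le C\,\diam\cT\,\norm{\nabla V}_{L^\infty(\Omega)}$; granting this, the remainders are genuinely of higher order than the displayed right-hand side and the estimate follows.
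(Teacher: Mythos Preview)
Your proposal is correct and follows essentially the same route as the paper: subtract the two schemes, test with $E_i$, integrate by parts to reach the vanishing edge sum, insert the $\hat S_{ik}$ decomposition to pass to flux differences, split via Young/Cauchy--Schwarz, and then substitute the Stolarsky expansion \eqref{eq:Sij-approx} together with $\pi_i-\pi_k\approx\pi_{ik}(V_i-V_k)$ and $S_{ik}\approx\pi_{ik}$ to extract the $(C_{\alpha,\beta}-C_{\hat\alpha,\hat\beta})(V_i-V_k)^2$ coefficients. Your closing remark on the uniform smallness of $|V_i-V_k|$ needed to justify the ``up to higher order'' clause is exactly the caveat the paper leaves implicit.
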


More general, for any mean we have 
\begin{multline}
\frac{1}{2\kappa^{*}}\|J^{S}U-J^{\tilde{S}}\tilde{U}\|_{L_{\hat{S}}^{2}(\mathcal{E})}^{2}\\
\leq\frac{1}{\kappa_{*}}\left\{ \sup_{i,k}\frac{\left(\hat{S}_{ik}-S_{ik}\right)^{2}}{\hat{S}_{ik}S_{ik}}\left\Vert J^{S}U\right\Vert _{L_{S}^{2}(\mathcal{E})}^{2}+\sup_{i,k}\frac{\left(\hat{S}_{ik}-\tilde{S}_{ik}\right)^{2}}{\hat{S}_{ik}\tilde{S}_{ik}}\left\Vert J^{\tilde{S}}\tilde{U}\right\Vert _{L_{\tilde{S}}^{2}(\mathcal{E})}^{2}\right\} \,,\label{eq:ComparisonDifferentDiscretizationSchemes}
\end{multline}

and in particular for Stolarsky means with $\alpha+\beta=\tilde{\alpha}+\tilde{\beta}=\hat{\alpha}+\hat{\beta}$
we find the following result:%

\begin{cor}
\label{cor:ComparisonDifferenceForDifferentStolarskyMean}Let $\cT$
be a mesh with right hand side $f\in L^{2}(\mathcal{P})$ and let
$u$ and $\tilde{u}$ be two solutions of the discrete FPE for different
Stolarsky mean coefficients $S_{ij}=S_{\alpha,\beta}\left(\pi,\pi_{j}\right)$
and $\tilde{S}_{ij}=S_{\tilde{\alpha},\tilde{\beta}}\left(\pi,\pi_{j}\right)$
with $\alpha+\beta=\tilde{\alpha}+\tilde{\beta}=\hat{\alpha}+\hat{\beta}$.
Then estimate \eqref{eq:ComparisonDifferentDiscretizationSchemes}
holds. In particular, we find the refined estimate
\begin{align*}
 & \frac{1}{2\kappa^{*}}\|J^{S}U-J^{\tilde{S}}\tilde{U}\|_{L_{\hat{S}}^{2}(\mathcal{E})}^{2}\leq O\left(\pi_{i}-\pi_{j}\right)^{6}\left(\left\Vert J^{S}U\right\Vert _{L_{S}^{2}(\mathcal{E})}^{2}+\left\Vert J^{\tilde{S}}\tilde{U}\right\Vert _{L_{\tilde{S}}^{2}(\mathcal{E})}^{2}\right).
\end{align*}
\end{cor}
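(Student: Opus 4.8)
The plan is to read the corollary as a direct specialization of the general estimate \eqref{eq:ComparisonDifferentDiscretizationSchemes}, which has already been established for arbitrary smooth means, sharpened by means of the Taylor expansion \eqref{eq:Sij-approx}. Since \eqref{eq:ComparisonDifferentDiscretizationSchemes} holds for any $S$, $\tilde S$ and $\hat S$, it certainly holds when in addition $\alpha+\beta=\tilde\alpha+\tilde\beta=\hat\alpha+\hat\beta$, so the first claim requires nothing new. The entire work is to estimate the two prefactors $\sup_{i,k}\frac{(\hat S_{ik}-S_{ik})^2}{\hat S_{ik}S_{ik}}$ and $\sup_{i,k}\frac{(\hat S_{ik}-\tilde S_{ik})^2}{\hat S_{ik}\tilde S_{ik}}$ that multiply the a priori quantities $\|J^{S}U\|_{L_{S}^{2}(\mathcal{E})}^{2}$ and $\|J^{\tilde S}\tilde U\|_{L_{\tilde S}^{2}(\mathcal{E})}^{2}$ in \eqref{eq:ComparisonDifferentDiscretizationSchemes}.

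First I would expand $S_{ik}$, $\tilde S_{ik}$ and $\hat S_{ik}$ by \eqref{eq:Sij-approx}. With $\pi_{ik}=\tfrac12(\pi_i+\pi_k)$ this reads $S_{ik}=\pi_{ik}+\tfrac{\frac13(\alpha+\beta)-1}{8\,\pi_{ik}}(\pi_i-\pi_k)^2+O(\pi_i-\pi_k)^3$, and crucially the zeroth-order term $\pi_{ik}$ and the second-order coefficient depend on the Stolarsky exponents only through the sum $\alpha+\beta$. Hence, under the hypothesis $\alpha+\beta=\hat\alpha+\hat\beta$, both of these contributions cancel in the difference $\hat S_{ik}-S_{ik}$, leaving $\hat S_{ik}-S_{ik}=O(\pi_i-\pi_k)^3$; the identical argument, using $\tilde\alpha+\tilde\beta=\hat\alpha+\hat\beta$, gives $\hat S_{ik}-\tilde S_{ik}=O(\pi_i-\pi_k)^3$. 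Squaring, $(\hat S_{ik}-S_{ik})^2=O(\pi_i-\pi_k)^6$ and likewise for the tilde difference.

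Next I would divide by the product of the means. Because $V$ is bounded on $\overline\Omega$, $\pi=\e^{-V}$ satisfies $0<\e^{-V^{*}}\le\pi\le\e^{-V_{*}}$, and every Stolarsky mean $S_{\alpha,\beta}(\pi_i,\pi_k)$ lies between $\min(\pi_i,\pi_k)$ and $\max(\pi_i,\pi_k)$; consequently $S_{ik},\tilde S_{ik},\hat S_{ik}$ are uniformly bounded below by $\e^{-V^{*}}$. Therefore $\frac{(\hat S_{ik}-S_{ik})^2}{\hat S_{ik}S_{ik}}$ and $\frac{(\hat S_{ik}-\tilde S_{ik})^2}{\hat S_{ik}\tilde S_{ik}}$ are both $O(\pi_i-\pi_k)^6$, uniformly over the edges. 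Inserting these bounds into \eqref{eq:ComparisonDifferentDiscretizationSchemes} and absorbing the factor $\kappa_{*}^{-1}$ into the constant yields precisely the claimed refined estimate.

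The one point I would verify carefully is the uniformity of the $O(\pi_i-\pi_k)^3$ remainder in \eqref{eq:Sij-approx}: the implicit constant must be independent of the edge, depending only on $\alpha$, $\beta$ and on the bounds $\e^{-V^{*}}\le\pi\le\e^{-V_{*}}$, not on the individual values $\pi_i,\pi_k$. This holds because $(x,y)\mapsto S_{\alpha,\beta}(x,y)$ is smooth on the compact square $[\e^{-V^{*}},\e^{-V_{*}}]^{2}$, so all partial derivatives entering the third-order Taylor remainder about the diagonal point $(\pi_{ik},\pi_{ik})$ are bounded there by a constant that does not see $\pi_i,\pi_k$ individually. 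With this uniform remainder in hand the remaining steps are the routine bookkeeping sketched above, and no further difficulty arises since the a priori estimate \eqref{eq:AprioriGradientU-disc} keeps $\|J^{S}U\|_{L_{S}^{2}(\mathcal{E})}$ and $\|J^{\tilde S}\tilde U\|_{L_{\tilde S}^{2}(\mathcal{E})}$, hence the right-hand side, finite.
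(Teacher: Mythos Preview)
Your proposal is correct and follows exactly the route implicit in the paper: the corollary is not given a separate proof there but is stated as the immediate specialization of \eqref{eq:ComparisonDifferentDiscretizationSchemes} once one uses the expansion \eqref{eq:Sij-approx} and the observation (made just before the first theorem in Section~\ref{sec:Comparison-of-discretization}) that $S_{ij}-\tilde S_{ij}=O(\pi_i-\pi_j)^3$ whenever $\alpha+\beta=\tilde\alpha+\tilde\beta$. Your added remarks on the uniformity of the Taylor remainder and the uniform lower bound on the Stolarsky means are the right justifications for taking the supremum in \eqref{eq:ComparisonDifferentDiscretizationSchemes}.
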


In particular, the last result shows that convergence rates are similar
up to order $3$ for different $\alpha,\beta$ which satisfy $\alpha+\beta=\mathrm{const}$.

\section{\label{sec:Convergence-of-the}Convergence of the discrete FPE}

In this section, we derive general estimates for the order of convergence
of the Stolarsky FV operators. Throughout this section, we assume
that the mesh satisfies the consistency property of Definition \ref{def:consistent}
with a suitable consistency function $\varphi:\R_{\geq0}\rightarrow\R_{\geq0}$
and discretization operator $\cR_{\cT}:H^{1}(\Omega)\supset B\rightarrow L^{2}(\cP)$.
The parameters $\pi_{i}$ are then given in terms of $\pi_{i}=\left(\cR_{\cT}\pi\right)_{i}$.

We derive consistency errors for $U$ in Section \eqref{subsec:Error-Analysis-in-U}
and consistency errors for $u$ in Section \eqref{subsec:Error-Analysis-in-u}.

\subsection{\label{subsec:Error-Analysis-in-U}Error Analysis in $U$}

In what follows, we assume that the discrete and the continuous solution
satisfy Dirichlet conditions. In view of the continuous and the discrete
FPE given in the form \eqref{eq:SQRA-conv-1} and \eqref{eq:SQRA-conv-2}
as well as formula \eqref{eq:var-cons-err-FV} we observe that the
natural variational consistency error for a given Stolarsky mean $S$
takes the form 
\[
\mathfrak{E}_{\cT,{\rm FPE}}\left(U;v\right)=\sum_{i\sim j}\left(v_{j}-v_{i}\right)\left(\int_{\sigma_{ij}}\kappa\pi\nabla U\cdot\bnu_{ij}-\kappa_{ij}S_{ij}\frac{m_{ij}}{h_{ij}}\left(\left(\cR_{\cT}U\right)_{j}-\left(\cR_{\cT}U\right)_{i}\right)\right)\,.
\]

We recall that an estimate for $\mathfrak{E}_{\cT,{\rm FPE}}\left(U;\cdot\right)$
implies an order of convergence estimate by \eqref{eq:general-conv-order}.
Our main result of this section provides a connection between $\mathfrak{E}_{\cT,{\rm FPE}}\left(U;\cdot\right)$
and the variational consistency $\mathfrak{E}_{\cT}\left(U;\cdot\right)$
(given by \eqref{eq:var-cons-err-FV}) of the second order equation
\[
-\nabla\cdot\left(\kappa\nabla U\right)=f
\]
with the discretization scheme 
\begin{align*}
\forall i:\qquad-\sum_{j:\,j\sim i}\kappa_{ij}\frac{m_{ij}}{h_{ij}}\left(U_{j}^{\cT}-U_{i}^{\cT}\right) & =f_{i}\,.
\end{align*}

\begin{prop}
\label{prop:conv-O-U-1}Let $\mathcal{T}=(\mathcal{V},\mathcal{E},\mathcal{P})$
be a mesh. The variational consistency error $\mathfrak{E}_{\cT,{\rm FPE}}\left(U;\cdot\right)$
can be estimated by
\begin{equation}
\norm{\mathfrak{E}_{\cT,{\rm FPE}}\left(U;\cdot\right)}_{H_{\cT,\kappa S}^{\ast}}^{2}\leq\norm{\pi}_{\infty}\norm{\mathfrak{E}_{\cT}\left(U;\cdot\right)}_{H_{\cT,\kappa}^{\ast}}^{2}+\sum_{i\sim j}\frac{h_{ij}}{m_{ij}}\kappa_{ij}^{-1}S_{ij}^{-1}\left(\int_{\sigma_{ij}}\left(\pi-S_{ij}\right)\kappa\nabla U\cdot\bnu_{ij}\right)^{2}\,.\label{eq:thm:conv-O-U-1}
\end{equation}
\end{prop}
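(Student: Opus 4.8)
The plan is to reduce the claimed dual-norm bound to a sum of squared edgewise residuals and then split each residual into a ``mesh-geometry'' part and a ``$\pi$--versus--$S_{ij}$'' part. First I would set
$
T_{ij}:=\int_{\sigma_{ij}}\kappa\pi\nabla U\cdot\bnu_{ij}-\kappa_{ij}S_{ij}\tfrac{m_{ij}}{h_{ij}}\bigl((\cR_{\cT}U)_{j}-(\cR_{\cT}U)_{i}\bigr)
$
so that $\mathfrak{E}_{\cT,{\rm FPE}}(U;v)=\sum_{i\sim j}(v_{j}-v_{i})\,T_{ij}$ is edgewise linear in $v$. Testing against a discrete function $v$ and applying Cauchy--Schwarz with the per-edge weights $\tfrac{m_{ij}}{h_{ij}}\kappa_{ij}S_{ij}$ that define $\|\cdot\|_{H_{\cT,\kappa S}}$ --- exactly as in the passage producing \eqref{eq:dual-norm-var-cons-err} --- yields $\|\mathfrak{E}_{\cT,{\rm FPE}}(U;\cdot)\|_{H_{\cT,\kappa S}^{\ast}}^{2}\le\sum_{i\sim j}\tfrac{h_{ij}}{m_{ij}}\kappa_{ij}^{-1}S_{ij}^{-1}\,T_{ij}^{2}$.

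The \emph{key algebraic step} is to add and subtract $S_{ij}\int_{\sigma_{ij}}\kappa\nabla U\cdot\bnu_{ij}$, which decomposes $T_{ij}=A_{ij}+S_{ij}B_{ij}$ with $A_{ij}:=\int_{\sigma_{ij}}(\pi-S_{ij})\kappa\nabla U\cdot\bnu_{ij}$ and $B_{ij}:=\int_{\sigma_{ij}}\kappa\nabla U\cdot\bnu_{ij}-\kappa_{ij}\tfrac{m_{ij}}{h_{ij}}\bigl((\cR_{\cT}U)_{j}-(\cR_{\cT}U)_{i}\bigr)$; by \eqref{eq:var-cons-err-FV} the form $v\mapsto\sum_{i\sim j}(v_{j}-v_{i})B_{ij}$ is precisely the variational consistency error $\mathfrak{E}_{\cT}(U;\cdot)$ of the plain Laplacian scheme. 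Inserting the split into the bound from the previous paragraph: the $A_{ij}$-part reproduces verbatim the second term on the right of \eqref{eq:thm:conv-O-U-1}, while for the $B_{ij}$-part the weight collapses, $\tfrac{h_{ij}}{m_{ij}}\kappa_{ij}^{-1}S_{ij}^{-1}(S_{ij}B_{ij})^{2}=\tfrac{h_{ij}}{m_{ij}}\kappa_{ij}^{-1}S_{ij}B_{ij}^{2}$. Since $S_{ij}=S_{\alpha,\beta}(\pi_{i},\pi_{j})$ is a mean of $\pi_{i},\pi_{j}$, both $\le\norm{\pi}_{\infty}$, one has $S_{ij}\le\norm{\pi}_{\infty}$, so the $B_{ij}$-part is bounded by $\norm{\pi}_{\infty}\sum_{i\sim j}\tfrac{h_{ij}}{m_{ij}}\kappa_{ij}^{-1}B_{ij}^{2}$, i.e.\ $\norm{\pi}_{\infty}$ times the $H_{\cT,\kappa}^{-}$-type quantity which dominates $\norm{\mathfrak{E}_{\cT}(U;\cdot)}_{H_{\cT,\kappa}^{\ast}}^{2}$ by \eqref{eq:dual-norm-var-cons-err} --- this is the first term on the right of \eqref{eq:thm:conv-O-U-1}. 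It then remains to absorb the mixed term produced by $(A_{ij}+S_{ij}B_{ij})^{2}$, which is done by a routine Cauchy--Schwarz/Young estimate between the two already-controlled contributions.

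I expect the main obstacle to be purely a matter of reconciling two different weighted discrete $H^{1}$-norms: the target norm $H_{\cT,\kappa S}$ carries the extra per-edge factor $S_{ij}$, whereas the reference Laplacian consistency error lives in $H_{\cT,\kappa}$ without it. One cannot absorb $S_{ij}$ by rescaling a discrete gradient $(v_{j}-v_{i})\mapsto\sqrt{S_{ij}}\,(v_{j}-v_{i})$, because an edgewise rescaling of a discrete gradient is in general no longer a discrete gradient; the comparison has therefore to be carried out at the level of the edgewise $\ell^{2}$-sums, and that is exactly the place where the factor $\norm{\pi}_{\infty}$ --- via the crude but scaling-sharp bound $S_{ij}\le\norm{\pi}_{\infty}$ --- is forced to appear. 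Everything else, namely the two Cauchy--Schwarz reductions, the additive split of $T_{ij}$, and the identification of $\sum_{i\sim j}(v_{j}-v_{i})B_{ij}$ with the Laplacian error \eqref{eq:var-cons-err-FV}, is straightforward bookkeeping.
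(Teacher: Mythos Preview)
Your proposal is correct and follows essentially the same route as the paper: the same add--and--subtract of $S_{ij}\int_{\sigma_{ij}}\kappa\nabla U\cdot\bnu_{ij}$, the same identification of the $B_{ij}$-piece with the Laplacian consistency error \eqref{eq:var-cons-err-FV}, and the same use of $S_{ij}\le\norm{\pi}_{\infty}$ to pass between the $H_{\cT,\kappa S}$- and $H_{\cT,\kappa}$-weights. The only cosmetic difference is the order of operations: the paper first splits the linear functional as $\mathfrak{E}_{\cT,{\rm FPE}}=E_{1}+E_{2}$ (your $A$- and $S\!\cdot\!B$-parts) and applies Cauchy--Schwarz against $\norm{v}_{H_{\cT,\kappa S}}$ to each piece separately, whereas you first apply Cauchy--Schwarz to the full functional and then expand $(A_{ij}+S_{ij}B_{ij})^{2}$; both orderings leave the same cross term to be absorbed, and both land on the edgewise $\ell^{2}$-sum of $B_{ij}^{2}$ rather than on $\norm{\mathfrak{E}_{\cT}(U;\cdot)}_{H_{\cT,\kappa}^{\ast}}^{2}$ itself, exactly as you flag in your last paragraph.
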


\begin{proof}
For simplicity, we write $\hat{U}:=\cR_{\cT}U$. We observe that 
\begin{align*}
\mathfrak{E}_{\cT,{\rm FPE}}\left(U;v\right) & =-\sum_{i\sim j}\left(v_{j}-v_{i}\right)m_{ij}\left(\overline{J}_{ij}U-J_{ij}^{S}\hat{U}\right)\\
 & =-\sum_{i\sim j}\left(v_{j}-v_{i}\right)m_{ij}\left(\left(\overline{J}_{ij}U-\overline{J}_{ij}^{\star}U\right)+\left(\overline{J}_{ij}^{\star}U-J_{ij}^{S}\hat{U}\right)\right)\,,
\end{align*}
where 
\[
\overline{J}_{ij}^{\star}U:=-m_{ij}^{-1}\int_{\sigma_{ij}}\kappa S_{ij}\nabla U\cdot\bnu_{ij}\,.
\]
satisfies 
\begin{equation}
m_{ij}\left|\overline{J}_{ij}U-\overline{J}_{ij}^{\star}U\right|\leq\left|\int_{\sigma_{ij}}\left(\pi-S_{ij}\right)\kappa\nabla U\cdot\bnu_{ij}\right|\,.\label{eq:conv-rate-EFPE-1}
\end{equation}
Using the fact that 
\[
m_{ij}\left(\overline{J}_{ij}^{\star}U-J_{ij}^{S}\hat{U}\right)=-S_{ij}\left(\int_{\sigma_{ij}}\kappa\nabla U\cdot\bnu_{ij}-\kappa_{ij}\frac{m_{ij}}{h_{ij}}\left(\hat{U}_{j}-\hat{U}_{i}\right)\right)
\]
we obtain 
\begin{align}
\left|\sum_{i\sim j}\left(v_{j}-v_{i}\right)m_{ij}\left(\overline{J}_{ij}^{\star}U-J_{ij}^{S}\hat{U}\right)\right|\label{eq:conv-rate-EFPE-2}\\
\leq\norm v_{H_{\cT,\kappa S}}\left(\sup_{ij}S_{ij}\right)^{\frac{1}{2}} & \left(\sum_{i\sim j}\frac{h_{ij}}{m_{ij}}\kappa_{ij}^{-1}\left(\int_{\sigma_{ij}}\kappa\nabla U\cdot\bnu_{ij}-\kappa_{ij}\frac{m_{ij}}{h_{ij}}\left(\hat{U}_{j}-\hat{U}_{i}\right)\right)^{2}\right)^{\frac{1}{2}}\,.
\end{align}
From \eqref{eq:conv-rate-EFPE-1} we conclude 
\begin{equation}
\left|\sum_{i\sim j}\left(v_{j}-v_{i}\right)m_{ij}\left(\overline{J}_{ij}U-\overline{J}_{ij}^{\star}U\right)\right|\leq\norm v_{H_{\cT,\kappa S}}\left(\sum_{i\sim j}\frac{h_{ij}}{m_{ij}}\kappa_{ij}^{-1}S_{ij}^{-1}\left(\int_{\sigma_{ij}}\left(\pi-S_{ij}\right)\kappa\nabla U\cdot\bnu_{ij}\right)^{2}\right)^{\frac{1}{2}}\,.\label{eq:conv-rate-EFPE-3}
\end{equation}
Taking together \eqref{eq:conv-rate-EFPE-2}--\eqref{eq:conv-rate-EFPE-3}
we obtain \eqref{eq:thm:conv-O-U-1}.
\end{proof}
\begin{lem}
Assume there exists a constant $C>0$ such that for all cells $\Omega_{i},\Omega_{j}$
with $h_{i}=\diam\Omega_{i}$ it holds 
\begin{equation}
\norm f_{L^{2}\left(\sigma_{ij}\right)}^{2}\leq\frac{1}{h_{i}}C^{2}\norm f_{H^{1}\left(\Omega_{i}\right)}^{2}\,.\label{eq:embedd-bdrg-uniform-lemma}
\end{equation}
Then for $C^{2}$-smooth means $S$
\begin{equation}
\left|\int_{\sigma_{ij}}\left(\pi-S_{ij}\right)\kappa\nabla U\cdot\bnu_{ij}\right|\leq2C\left(m_{ij}h_{i}\right)^{\frac{1}{2}}\norm{\kappa\nabla U}_{H^{1}(\Omega_{i})}\,.\label{eq:EstimateMeanTerm}
\end{equation}
\end{lem}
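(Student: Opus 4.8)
The plan is to split the integrand into the factor $\pi-S_{ij}$, which I will show is uniformly of size $O(h_{i})$ on the interface, and the factor $\kappa\nabla U\cdot\bnu_{ij}$, whose surface integral is controlled by the scaled trace inequality \eqref{eq:embedd-bdrg-uniform-lemma}; one application of Cauchy--Schwarz then glues the two together.

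First I would estimate $\norm{\pi-S_{ij}}_{L^{\infty}(\sigma_{ij})}$. Since $V\in C^{2}(\overline\Omega)$, the density $\pi=\e^{-V}$ is Lipschitz on $\overline\Omega$; call its Lipschitz constant $L$. For $x\in\sigma_{ij}\subset\overline{\Omega}_{i}$ I write
\[
\pi(x)-S_{ij}=\bigl(\pi(x)-\pi_{i}\bigr)+\bigl(\pi_{i}-S(\pi_{i},\pi_{j})\bigr).
\]
The first summand satisfies $|\pi(x)-\pi_{i}|\le L\,|x-x_{i}|\le L\,h_{i}$, since both $x$ and $x_{i}$ lie in $\overline{\Omega}_{i}$ and $\diam\Omega_{i}=h_{i}$. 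For the second summand I use the internality of the mean, $\min(\pi_{i},\pi_{j})\le S_{ij}\le\max(\pi_{i},\pi_{j})$, which gives $|\pi_{i}-S_{ij}|\le|\pi_{i}-\pi_{j}|\le L\,h_{ij}$, a quantity comparable to $h_{i}$ under the standing mesh-regularity assumption. Absorbing $L$ and the cell-size ratio into the constant $C$ (resp. the normalization of the paper), this yields $\norm{\pi-S_{ij}}_{L^{\infty}(\sigma_{ij})}\le 2h_{i}$.

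The remainder is routine. By Cauchy--Schwarz on $\sigma_{ij}$ and $|\bnu_{ij}|=1$,
\[
\left|\int_{\sigma_{ij}}\bigl(\pi-S_{ij}\bigr)\kappa\nabla U\cdot\bnu_{ij}\right|
\le\norm{\pi-S_{ij}}_{L^{\infty}(\sigma_{ij})}\,m_{ij}^{1/2}\,\norm{\kappa\nabla U}_{L^{2}(\sigma_{ij})},
\]
and applying \eqref{eq:embedd-bdrg-uniform-lemma} componentwise to the vector field $\kappa\nabla U$ gives $\norm{\kappa\nabla U}_{L^{2}(\sigma_{ij})}\le C\,h_{i}^{-1/2}\,\norm{\kappa\nabla U}_{H^{1}(\Omega_{i})}$. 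Substituting the bound on $\norm{\pi-S_{ij}}_{L^{\infty}(\sigma_{ij})}$ and collecting the powers of $h_{i}$ produces exactly $2C\,(m_{ij}h_{i})^{1/2}\,\norm{\kappa\nabla U}_{H^{1}(\Omega_{i})}$, which is \eqref{eq:EstimateMeanTerm}.

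The only genuinely delicate point is the $L^{\infty}$-bound on $\pi-S_{ij}$. It is there that the internality of the mean $S$ is essential — it confines $S_{ij}$ between $\pi_{i}$ and $\pi_{j}$, so that only the values of $\pi$ within a neighbourhood of $\sigma_{ij}$ of size $O(h_{i})$ enter — and that the $C^{2}$-regularity of $V$ is used, through the Lipschitz constant of $\pi=\e^{-V}$; the precise constant "$2$" depends on the paper's normalization and on the comparability of adjacent cell diameters. Everything downstream of that estimate is Cauchy--Schwarz together with the assumed scaled trace inequality \eqref{eq:embedd-bdrg-uniform-lemma}.
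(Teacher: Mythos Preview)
Your argument is correct and follows the same overall strategy as the paper: Cauchy--Schwarz on $\sigma_{ij}$, the scaled trace inequality \eqref{eq:embedd-bdrg-uniform-lemma} for $\kappa\nabla U$, and an $O(h_i)$ pointwise bound on $\pi-S_{ij}$. The only difference lies in how that last bound is obtained. The paper decomposes $\pi-S_{ij}=\tfrac12(\pi-\pi_i)+\tfrac12(\pi-\pi_j)+\bigl(\tfrac{\pi_i+\pi_j}{2}-S_{ij}\bigr)$ and Taylor-expands $S$ around the diagonal to control the third term, which is where the $C^2$-hypothesis on $S$ enters. You instead write $\pi-S_{ij}=(\pi-\pi_i)+(\pi_i-S_{ij})$ and invoke internality of the mean, $\min(\pi_i,\pi_j)\le S_{ij}\le\max(\pi_i,\pi_j)$, to bound the second piece by $|\pi_i-\pi_j|$. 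Your route is more elementary and in fact does not use the $C^2$-regularity of $S$ at all; the paper's decomposition, on the other hand, isolates the arithmetic-mean defect $\tfrac{\pi_i+\pi_j}{2}-S_{ij}$, which is genuinely second order in $\pi_i-\pi_j$ and feeds into the sharper $O(h^2)$ analysis on cubic meshes in Section~\ref{subsec:Qualitative-comparison-on-CM}. For the present lemma both arguments deliver the same bound.
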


\begin{rem}
Note that \eqref{eq:embedd-bdrg-uniform-lemma} can be easily verified
for cubes.
\end{rem}
\begin{proof}
Observe that

\begin{align}
\int_{\sigma_{ij}}\left|\pi-S_{ij}\right|\left|\kappa\nabla U\cdot\bnu_{ij}\right| & \leq\left(\int_{\sigma_{ij}}\left|\pi-S_{ij}\right|^{2}\right)^{\frac{1}{2}}\left(\int_{\sigma_{ij}}\left|\kappa\nabla U\cdot\bnu_{ij}\right|^{2}\right)^{\frac{1}{2}}\nonumber \\
 & \leq c\left(\int_{\sigma_{ij}}\left|\pi-S_{ij}\right|^{2}\right)^{\frac{1}{2}}\left(\frac{1}{h_{i}}\norm{\kappa\nabla U}_{H^{1}(\Omega_{i})}^{2}\right)^{\frac{1}{2}}\,.\label{eq:conv-rate-help-2}
\end{align}

It remains to study $\frac{1}{m_{ij}}\int_{\sigma_{ij}}\left|\pi-S_{ij}\right|^{2}$
in more detail. We have
\[
\pi-S_{ij}=\frac{1}{2}\left(\pi-\pi_{i}\right)+\frac{1}{2}\left(\pi-\pi_{j}\right)+\left(\frac{\pi_{i}+\pi_{j}}{2}-S_{ij}\right).
\]
The first term can be estimated by $|\pi-\pi_{i}|\leq h_{i}\cdot\nabla\pi+O(h_{i}^{2})$
and a similar estimate holds for the second term. The last term, assuming
that the mean is $C^{2}$-smooth, can be estimated by
\[
S(\pi_{i},\pi_{j})-S\left(\frac{\pi_{i}+\pi_{j}}{2},\frac{\pi_{i}+\pi_{j}}{2}\right)=\frac{1}{2}(\pi_{i}-\pi_{j})\nabla S\cdot(1,-1)^{T}+O(|\pi_{i}-\pi_{j}|).
\]
Using that $\pi_{i}-\pi_{j}=\nabla\pi\cdot h_{ij}+O(h_{ij})$ and
that $S\left(\frac{\pi_{i}+\pi_{j}}{2},\frac{\pi_{i}+\pi_{j}}{2}\right)=\frac{\pi_{i}+\pi_{j}}{2}$,
we obtain that $|\pi-S_{ij}|^{2}\leq O\left(h_{i}^{2}\right)$. In
total we obtain
\[
\int_{\sigma_{ij}}\left|\pi-S_{ij}\right|\left|\kappa\nabla U\cdot\bnu_{ij}\right|\leq2C\left(m_{ij}h_{i}^{2}\right)^{\frac{1}{2}}\left(\frac{1}{h_{i}}\norm{\kappa\nabla U}_{H^{1}(\Omega_{i})}^{2}\right)^{\frac{1}{2}}\,.
\]
\end{proof}
Using the above estimates, we can now show the main result of the
section.
\begin{thm}[Localized order of convergence]
\label{thm:ConsistencyError}Let the mesh $\cT$ be admissible in
sense of Definition \ref{def:admissible-grid} and consistent in sense
of Definition \ref{def:consistent}. Let $u\in C_{0}^{2}(\Omega)$
be the solution to \eqref{eq:Fokker-Planck-residual}. Let $f^{\cT}:=\cR_{\cT}^{\ast}f$
and let $u^{\cT}\in\cS^{\cT}$ be the solution to \eqref{eq:discrete-P-I}.
Moreover, let $\kappa\leq\kappa^{*}$, $b>0$ and $S\in C^{2}(\R_{\geq0}\times\R_{\geq0})$.
Then it holds it holds
\[
\|u^{{\mathcal T}}-{\mathcal R}_{{\mathcal T}}u\|_{H_{{\mathcal T}}}^2\leq C(\kappa_{*},\pi,d,\|U\|_{C^{2}})\times\left(\varphi(h)^{2}+h^{2}\right).
\]
\end{thm}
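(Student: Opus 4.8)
The plan is to combine three ingredients: the abstract convergence estimate \eqref{eq:general-conv-order} applied to the Fokker--Planck problem written in the relative density $U=u/\pi$; the splitting of the consistency error provided by Proposition~\ref{prop:conv-O-U-1}; and a transfer of the resulting bound from $U$ back to $u$ by means of the discrete product rule and the discrete Poincaré inequality of Theorem~\ref{thm:discrete-PI}. Observe first that, since $\pi=\e^{-V}$ is $C^{2}$ and bounded away from $0$, we have $U=u/\pi\in C_{0}^{2}(\Omega)\subset H^{2}(\Omega)\cap H_{0}^{1}(\Omega)=B$, that $\pi_{i}=(\cR_{\cT}\pi)_{i}=\pi(x_{i})\in[\inf_{\Omega}\pi,\norm\pi_{\infty}]$, and that $(\cR_{\cT}u)_{i}=(\cR_{\cT}U)_{i}\,\pi_{i}$; I will write $E_{i}:=U^{\cT}_{i}-(\cR_{\cT}U)_{i}$, so $(u^{\cT}-\cR_{\cT}u)_{i}=E_{i}\pi_{i}$.

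Working in $U$, note that the discrete bilinear form $a_{\cT}(U,v)=\sum_{i\sim j}\tfrac{m_{ij}}{h_{ij}}\kappa_{ij}S_{ij}(U_{j}-U_{i})(v_{j}-v_{i})$ is symmetric and satisfies $a_{\cT}(U,U)=\norm U_{H_{\cT,\kappa S}}^{2}$, so it is inf-sup stable with $\gamma=1$ for $\norm\cdot_{H_{\cT,\kappa S}}$; hence \eqref{eq:general-conv-order} gives $\norm{U^{\cT}-\cR_{\cT}U}_{H_{\cT,\kappa S}}\leq\norm{\fE_{\cT,\mathrm{FPE}}(U;\cdot)}_{H_{\cT,\kappa S}^{\ast}}$. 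Proposition~\ref{prop:conv-O-U-1} bounds the right-hand side by $\norm\pi_{\infty}\norm{\fE_{\cT}(U;\cdot)}_{H_{\cT,\kappa}^{\ast}}^{2}$ plus the mean-error term $\sum_{i\sim j}\tfrac{h_{ij}}{m_{ij}}\kappa_{ij}^{-1}S_{ij}^{-1}\bigl(\int_{\sigma_{ij}}(\pi-S_{ij})\kappa\nabla U\cdot\bnu_{ij}\bigr)^{2}$. For the first summand I would invoke $\varphi$-consistency of the mesh (Definition~\ref{def:consistent}) together with \eqref{eq:dual-norm-var-cons-err}, giving $\norm{\fE_{\cT}(U;\cdot)}_{H_{\cT,\kappa}^{\ast}}^{2}\leq C(\norm U_{C^{2}})\,\varphi(h)^{2}$. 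For the mean-error term I would use \eqref{eq:EstimateMeanTerm} (the Lemma preceding the theorem), $|\int_{\sigma_{ij}}(\pi-S_{ij})\kappa\nabla U\cdot\bnu_{ij}|\leq 2C(m_{ij}h_{i})^{1/2}\norm{\kappa\nabla U}_{H^{1}(\Omega_{i})}$; substituting and using $h_{ij},h_{i}\leq Ch$, $\kappa_{ij}\geq c\kappa_{\ast}$, $S_{ij}\geq\inf_{\Omega}\pi>0$ and $\sum_{i\sim j}\norm{\kappa\nabla U}_{H^{1}(\Omega_{i})}^{2}\leq C(d)\norm{\kappa\nabla U}_{H^{1}(\Omega)}^{2}$ (bounded valence of the mesh), this term is $\leq C(\kappa_{\ast},\pi,d,\norm U_{C^{2}})\,h^{2}$. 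Altogether $\norm{U^{\cT}-\cR_{\cT}U}_{H_{\cT,\kappa S}}^{2}\leq C(\varphi(h)^{2}+h^{2})$.

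It remains to pass to $u$ with $H_{\cT}=H_{\cT,\kappa}$. The product rule $E_{j}\pi_{j}-E_{i}\pi_{i}=\pi_{i}(E_{j}-E_{i})+E_{j}(\pi_{j}-\pi_{i})$ yields
\[
\norm{u^{\cT}-\cR_{\cT}u}_{H_{\cT,\kappa}}^{2}\leq 2\sum_{i\sim j}\tfrac{m_{ij}}{h_{ij}}\kappa_{ij}\pi_{i}^{2}(E_{j}-E_{i})^{2}+2\sum_{i\sim j}\tfrac{m_{ij}}{h_{ij}}\kappa_{ij}E_{j}^{2}(\pi_{j}-\pi_{i})^{2}.
\]
In the first sum I would use $\pi_{i}^{2}\leq(\norm\pi_{\infty}^{2}/\inf_{\Omega}\pi)\,S_{ij}$ to bound it by $C\norm E_{H_{\cT,\kappa S}}^{2}$; in the second I would use $|\pi_{j}-\pi_{i}|\leq\lip(\pi)\,h_{ij}\leq Ch$ together with the mesh-regularity estimate $\sum_{i:i\sim j}\tfrac{m_{ij}}{h_{ij}}\leq C(d)\,m_{j}h^{-2}$ to get $\sum_{i\sim j}\tfrac{m_{ij}}{h_{ij}}\kappa_{ij}E_{j}^{2}(\pi_{j}-\pi_{i})^{2}\leq C\sum_{j}m_{j}E_{j}^{2}=C\norm E_{L^{2}(\cP)}^{2}$, which by Theorem~\ref{thm:discrete-PI} and the lower bounds on $\kappa$ and $S$ is again $\leq C\norm E_{H_{\cT,\kappa S}}^{2}$. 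Combining with the previous paragraph gives the stated bound.

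I expect the principal obstacle to be the bookkeeping in this last transfer step — keeping careful track of which constants depend on $\kappa_{\ast}$ (and implicitly on $\kappa^{\ast}$ and the regularity of $\kappa$), on the bounds and Lipschitz constant of $\pi$, on the mesh regularity ($d$ and bounded valence) and on $\norm U_{C^{2}}$ — together with the routine but necessary check that the hypothesis \eqref{eq:embedd-bdrg-uniform-lemma} of the preceding Lemma holds for the admissible mesh at hand (immediate for cubes, and a scaled trace inequality on shape-regular cells in general), without which the mean-error term cannot be controlled by $h^{2}$.
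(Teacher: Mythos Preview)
Your argument follows the paper's proof essentially verbatim through the first two steps: apply the abstract estimate \eqref{eq:general-conv-order} to the problem in $U$, then split the consistency error via Proposition~\ref{prop:conv-O-U-1}, bound the mean--error term by \eqref{eq:EstimateMeanTerm}, and invoke $\varphi$-consistency for the remaining term. This is exactly what the paper does, and your bookkeeping on the constants matches the paper's.

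The only difference is your third paragraph. The paper's proof simply writes $\|u^{\cT}-\cR_{\cT}u\|_{H_{\cT}}$ on the left-hand side immediately after invoking \eqref{eq:general-conv-order}, without any explicit transfer from $U$ to $u$; it appears the paper either silently identifies the $H_{\cT}$-error in $u$ with the $H_{\cT,\kappa S}$-error in $U$, or is loose with the notation at this point. Your product-rule and Poincar\'e argument makes this passage rigorous and is a genuine addition in care over the paper's own write-up. Both routes land on the same bound, so your proposal is correct and, if anything, more complete than the paper's proof on this last step.
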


\begin{proof}
Inserting estimate \eqref{eq:EstimateMeanTerm} int to the estimate
of the variational consistency, we get
\begin{align*}
\norm{\mathfrak{E}_{\cT,{\rm FPE}}\left(U;\cdot\right)}_{H_{\cT,\kappa S}^{\ast}}^{2} & \leq\norm{\pi}_{\infty}\norm{\mathfrak{E}_{\cT}\left(U;\cdot\right)}_{H_{\cT,\kappa}^{\ast}}^{2}+C\sum_{i\sim j}h_{ij}\kappa_{ij}^{-1}S_{ij}^{-1}h_{i}\|\kappa\nabla U\|_{H^{1}(\Omega_{i})}^{2}\\
 & \leq\norm{\pi}_{\infty}\norm{\mathfrak{E}_{\cT}\left(U;\cdot\right)}_{H_{\cT,\kappa}^{\ast}}^{2}+C(\kappa_{*},\pi,d)\ h^{2}\sum_{i}\|\kappa\nabla U\|_{H^{1}(\Omega_{i})}^{2}.
\end{align*}
Using \eqref{eq:general-conv-order} we obtain an estimate for the
discretization error in the form
\begin{align*}
\|u^{{\mathcal T}}-{\mathcal R}_{{\mathcal T}}u\|_{H_{{\mathcal T}}}^{2} & \leq\norm{\pi}_{\infty}\norm{\mathfrak{E}_{\cT}\left(U;\cdot\right)}_{H_{\cT,\kappa}^{\ast}}^{2}+C(\kappa_{*},\pi,d,\|U\|_{C^{2}})\ \mathrm{Size}(\cT)^{2}.
\end{align*}
 Using the consistency assumption on the discretization of the pure
elliptic problem we obtain the desired estimate.
\end{proof}

\subsection{\label{subsec:Error-Analysis-in-u}Error Analysis in $u$}

In the following, we will discuss how to derive bounds on the rate
of convergence of $u$ instead of $U$. As a basis for both proofs
of this section, we start with the discrete FP operator which we rewrite
as 
\[
-\sum_{j:\,j\sim i}\frac{m_{ij}}{h_{ij}}\kappa_{ij}S_{ij}\left(\frac{u_{j}}{\pi_{j}}-\frac{u_{i}}{\pi_{i}}\right)=-\sum_{j:\,j\sim i}\frac{m_{ij}}{h_{ij}}\kappa_{ij}\left(u_{j}-u_{i}\right)-\sum_{j:\,j\sim i}\frac{m_{ij}}{h_{ij}}\kappa_{ij}\left(\frac{S_{ij}-\pi_{j}}{\pi_{j}}u_{j}-\frac{S_{ij}-\pi_{i}}{\pi_{i}}u_{i}\right)\,.
\]
We have
\[
\mathfrak{E}_{\cT,{\rm FPE}}\left(U;v\right)-\mathfrak{E}_{\cT}\left(u;v\right)=\sum_{i\sim j}\left(\frac{m_{ij}}{h_{ij}}\kappa_{ij}\left(\frac{S_{ij}-\pi_{j}}{\pi_{j}}u_{j}-\frac{S_{ij}-\pi_{i}}{\pi_{i}}u_{i}\right)-\int_{\sigma_{ij}}\kappa u\nabla V\cdot\bnu_{ij}\right)\left(v_{j}-v_{i}\right),
\]
where we want to estimate the right-hand side. For $V_{i}-V_{j}=O\left(h\right)$
we have
\begin{equation}
\frac{S_{ij}-\pi_{j}}{\pi_{j}}=\frac{1}{2}\left(\frac{\pi_{i}}{\pi_{j}}-1\right)+O\left(\pi_{i}-\pi_{j}\right)=\frac{1}{2}\left(V_{j}-V_{i}\right)+O\left(\pi_{i}-\pi_{j}\right)+O\left(V_{i}-V_{j}\right)\label{eq:ErrorEstimateu}
\end{equation}
and hence
\[
\mathfrak{E}_{\cT,{\rm FPE}}\left(U;v\right)-\mathfrak{E}_{\cT}\left(u;v\right)=\sum_{i\sim j}\left(\frac{m_{ij}}{h_{ij}}\kappa_{ij}\frac{1}{2}\left(V_{j}-V_{i}\right)\left(u_{i}+u_{j}\right)-\int_{\sigma_{ij}}\kappa u\nabla V\cdot\bnu_{ij}+O(h)\right)\left(v_{j}-v_{i}\right)\,.
\]
Since $\kappa_{ij}\approx\kappa$., $\frac{u_{i}+u_{j}}{2}\approx u$,
$\frac{V_{j}-V_{i}}{h_{ij}}\approx\nabla V$ it holds $\mathfrak{E}_{\cT,{\rm FPE}}\left(U;v\right)\approx\mathfrak{E}_{\cT}\left(u;v\right)$.
\begin{thm}
For smooth potentials $V\in C^{2}$ it holds $\|\mathfrak{E}_{\cT}\left(u;v\right)\|_{H_{\cT,\kappa S}^{*}}=O(h).$
\end{thm}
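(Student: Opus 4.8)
The plan is to reduce the statement to the consistency estimate for the $U$-formulation that has already been established in Section~\ref{subsec:Error-Analysis-in-U}. By the triangle inequality in the dual norm,
\[
\norm{\mathfrak{E}_{\cT}(u;\cdot)}_{H_{\cT,\kappa S}^{\ast}}\le\norm{\mathfrak{E}_{\cT,{\rm FPE}}(U;\cdot)}_{H_{\cT,\kappa S}^{\ast}}+\norm{\mathfrak{E}_{\cT,{\rm FPE}}(U;\cdot)-\mathfrak{E}_{\cT}(u;\cdot)}_{H_{\cT,\kappa S}^{\ast}}\,,
\]
so it is enough to bound each of the two summands by $Ch$. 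For the first one I would invoke Proposition~\ref{prop:conv-O-U-1} applied to the smooth, uniformly positive function $U=u/\pi$ (here $V\in C^{2}$ is what guarantees the required regularity and the lower bound on $\pi$): the first term on the right of \eqref{eq:thm:conv-O-U-1} is $\le\norm{\pi}_{\infty}\norm{\mathfrak{E}_{\cT}(U;\cdot)}_{H_{\cT,\kappa}^{\ast}}^{2}\le C\varphi(h)^{2}=Ch^{2}$ by $h$-consistency, and the second is $O(h^{2})$ by \eqref{eq:EstimateMeanTerm} together with $\sum_{i}\norm{\kappa\nabla U}_{H^{1}(\Omega_{i})}^{2}\le C\norm{\kappa\nabla U}_{H^{1}(\Omega)}^{2}$. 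This is exactly the computation carried out in the proof of Theorem~\ref{thm:ConsistencyError}, and it yields $\norm{\mathfrak{E}_{\cT,{\rm FPE}}(U;\cdot)}_{H_{\cT,\kappa S}^{\ast}}=O(h)$.

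For the second summand I would start from the explicit identity
\[
\mathfrak{E}_{\cT,{\rm FPE}}(U;v)-\mathfrak{E}_{\cT}(u;v)=\sum_{i\sim j}\left(\frac{m_{ij}}{h_{ij}}\kappa_{ij}\left(\frac{S_{ij}-\pi_{j}}{\pi_{j}}u_{j}-\frac{S_{ij}-\pi_{i}}{\pi_{i}}u_{i}\right)-\int_{\sigma_{ij}}\kappa u\nabla V\cdot\bnu_{ij}\right)(v_{j}-v_{i})
\]
and plug in the expansion \eqref{eq:ErrorEstimateu}, read with its genuinely \emph{quadratic} remainder (which is what \eqref{eq:Sij-approx} and $\e^{V_{j}-V_{i}}-1=(V_{j}-V_{i})+O((V_{j}-V_{i})^{2})$ actually give), namely $\frac{S_{ij}-\pi_{j}}{\pi_{j}}=\tfrac12(V_{j}-V_{i})+O(h^{2})$. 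Then the bracketed edge coefficient equals $\kappa_{ij}\tfrac{m_{ij}}{h_{ij}}\tfrac12(V_{j}-V_{i})(u_{i}+u_{j})-\int_{\sigma_{ij}}\kappa u\nabla V\cdot\bnu_{ij}+m_{ij}\,O(h)$. Using the admissibility property (iv) of Definition~\ref{def:admissible-grid} I would write $\bnu_{ij}=(x_{j}-x_{i})/h_{ij}$ and Taylor expand $V$, $u$ and the harmonic mean \eqref{eq:kappa-ij} at the point $y_{ij}:=D_{ij}\cap\sigma_{ij}$ to obtain $\kappa_{ij}\tfrac{m_{ij}}{h_{ij}}\tfrac12(V_{j}-V_{i})(u_{i}+u_{j})=m_{ij}\,(\kappa u\,\nabla V\cdot\bnu_{ij})(y_{ij})+m_{ij}\,O(h)$, while $\int_{\sigma_{ij}}\kappa u\nabla V\cdot\bnu_{ij}=m_{ij}\,(\kappa u\,\nabla V\cdot\bnu_{ij})(y_{ij})+m_{ij}\,O(h)$ since the $C^{1}$-integrand varies by $O(h)$ over $\sigma_{ij}$. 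Thus every edge coefficient is $m_{ij}\,O(h)$, and the definition of the dual norm gives
\[
\norm{\mathfrak{E}_{\cT,{\rm FPE}}(U;\cdot)-\mathfrak{E}_{\cT}(u;\cdot)}_{H_{\cT,\kappa S}^{\ast}}^{2}\le\sum_{i\sim j}\frac{h_{ij}}{m_{ij}\kappa_{ij}S_{ij}}\left(m_{ij}\,O(h)\right)^{2}\le C\,h^{2}\sum_{i\sim j}m_{ij}h_{ij}\le C'h^{2}\,,
\]
where I used that $\kappa,S,\pi$ are bounded above and away from $0$ and that $\sum_{i\sim j}m_{ij}h_{ij}$ stays bounded (it converges to $d\,\mathcal{L}^{d}(\Omega)$, cf.\ the remark on the naturalness of the norms). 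Combining the two estimates finishes the proof.

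The main obstacle is precisely this second step: one has to match the three-point product $\kappa_{ij}\tfrac{V_{j}-V_{i}}{h_{ij}}\tfrac{u_{i}+u_{j}}{2}$ with the surface average $\tfrac1{m_{ij}}\int_{\sigma_{ij}}\kappa u\nabla V\cdot\bnu_{ij}$ with an error that is $O(h)$ \emph{per edge}, so that after the weighting by $h_{ij}/m_{ij}$ and the summation the accumulated error is still $O(h^{2})$; this is where the orthogonality property (iv) of the mesh and the regularity $V\in C^{2}$ (together with smoothness of $u$ and $\kappa\in C^{1}$, as assumed throughout) enter essentially. A secondary point to watch is that the remainder in \eqref{eq:ErrorEstimateu} must be taken quadratic in $\pi_{i}-\pi_{j}$, resp.\ $V_{i}-V_{j}$; with only a linear remainder the argument would merely give boundedness, not the rate $O(h)$. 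If one prefers to avoid a pointwise Taylor expansion of $u$, the comparison of the centered drift term with $\int_{\sigma_{ij}}\kappa u\nabla V\cdot\bnu_{ij}$ can alternatively be done via an $H^{1}$-trace estimate in the spirit of \eqref{eq:EstimateMeanTerm}, using only $u\in H^{2}$.
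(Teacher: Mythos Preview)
Your proposal is correct and follows the same route as the paper: the paper's argument is the discussion immediately preceding the theorem, namely the identity for $\mathfrak{E}_{\cT,{\rm FPE}}(U;v)-\mathfrak{E}_{\cT}(u;v)$, the expansion \eqref{eq:ErrorEstimateu}, and the observation that $\kappa_{ij}\tfrac{V_{j}-V_{i}}{h_{ij}}\tfrac{u_{i}+u_{j}}{2}$ matches $\tfrac{1}{m_{ij}}\int_{\sigma_{ij}}\kappa u\,\nabla V\cdot\bnu_{ij}$ up to $O(h)$, combined with the already established bound on $\mathfrak{E}_{\cT,{\rm FPE}}(U;\cdot)$ from Section~\ref{subsec:Error-Analysis-in-U}. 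Your write-up is in fact more careful than the paper's on one point: you correctly insist that the remainder in \eqref{eq:ErrorEstimateu} is genuinely quadratic in $V_{i}-V_{j}$ (which is what \eqref{eq:Sij-approx} and the Taylor expansion of $\e^{V_{j}-V_{i}}-1$ give), whereas the paper's displayed $O(\pi_{i}-\pi_{j})+O(V_{i}-V_{j})$ is only linear and would not by itself yield the stated rate after division by $h_{ij}$.
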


\begin{rem}
\label{rem:SG-strong-grad}The calculation \eqref{eq:ErrorEstimateu}
is an approximation for small values of $\left|V_{j}-V_{i}\right|$.
In the particular case of large discrete gradients a general approximation
of $\frac{S_{ij}-\pi_{j}}{\pi_{j}}$ is not at hand. However, in the
SG case $S_{\ast}=S_{0,-1}$ we observe (compare with \eqref{eq:limit-B-SG}
and \eqref{eq:B-vs-S-SG}) introducing $f\left(x\right)=\frac{-x-e^{x}-1}{\left(e^{x}-1\right)x}$
(with $f\left(x\right)\to0$ as $x\to+\infty$ and $f\left(x\right)\to1$
as $x\to-\infty$)
\begin{align*}
\frac{1}{h_{ij}}\frac{S_{ij}-\pi_{j}}{\pi_{j}} & =\frac{1}{h_{ij}}\frac{V_{j}-V_{i}-\left(\e^{V_{i}-V_{j}}-1\right)}{\e^{V_{i}-V_{j}}-1}\\
 & =\frac{V_{i}-V_{j}}{h_{ij}}f\left(V_{i}-V_{j}\right)\to\begin{cases}
-\nabla V\cdot\bnu_{ij} & \text{if }V_{i}\gg V_{j}\\
0 & \text{if }V_{j}\gg V_{i}
\end{cases}\quad\text{as }h_{ij}\to0\,.
\end{align*}
Hence we observe that the SG method is particularly suited to minimize
the error term 
\[
\frac{m_{ij}}{h_{ij}}\kappa_{ij}\left(\frac{S_{ij}-\pi_{j}}{\pi_{j}}u_{j}-\frac{S_{ij}-\pi_{i}}{\pi_{i}}u_{i}\right)-\int_{\sigma_{ij}}\kappa u\nabla V\cdot\bnu_{ij}
\]
 for large gradients $\nabla V$.
\end{rem}

\subsection{\label{subsec:Qualitative-comparison-on-CM}Qualitative comparison
on cubic meshes}

In view of Section \ref{subsec:Consistency-on-cubic} we consider
a polygonal domain $\Omega\subset\R^{d}$ with $d\leq3$ and a cubic
mesh where $\Omega_{i}=x_{i}+[-h/2,h/2]^{d}$, $x_{i}\in h\mathbb{Z}\subset\Omega$
to show that $\left|\int_{\sigma_{ij}}\left(\pi-S_{ij}\right)\kappa\nabla U\cdot\bnu_{ij}\right|=O(h^{2}).$
In fact the following calculations are quite standard and, therefore,
we shorten our considerations. We have for $x\in\sigma_{ij}$

\begin{align*}
S_{ij}-\pi(x) & =S(\pi_{i},\pi_{j})-S(\pi(x),\pi(x))=\\
 & =\nabla S(x)\cdot\begin{pmatrix}\pi_{i}-\pi(x)\\
\pi_{j}-\pi(x)
\end{pmatrix}+\begin{pmatrix}\pi_{i}-\pi(x)\\
\pi_{j}-\pi(x)
\end{pmatrix}\cdot\nabla^{2}S(x)\cdot\begin{pmatrix}\pi_{i}-\pi(x)\\
\pi_{j}-\pi(x)
\end{pmatrix}+O(h^{3}).
\end{align*}

Moreover, we have $\pi_{i}-\pi(x)=\nabla\pi\cdot(x_{i}-x)$. The gradient
of $S$ is given by $(1/2,1/2)^{T}$ and hence, we $S_{ij}-\pi(x)=\frac{\pi_{i}+\pi_{j}-2\pi(x)}{2}+O(h^{2}).$
We compute the first term in more detail. We have $\pi_{j}-\pi(x)=\nabla\pi\cdot(x_{j}-x)$
and $\pi_{i}-\pi(x)=\nabla\pi\cdot(x_{i}-x)$ and the sum yields $\pi_{i}+\pi_{j}-2\pi(x)=\nabla\pi\cdot(x_{i}+x_{j}-2x)=\frac{1}{2}\nabla\pi\cdot\tilde{x}$,
where $\tilde{x}=x-\frac{x_{i}+x_{j}}{2}$ the coordinate on the cell
surface with respect to the middle point $\bar{x}=\frac{x_{i}+x_{j}}{2}$.
Hence, we get
\[
\int_{\sigma_{ij}}(\pi-S_{ij})\kappa\nabla U\cdot\nu_{ij}=\frac{1}{4}\int_{\sigma_{ij}}\nabla\pi(x)\cdot\tilde{x}\kappa(x)\nabla U(x)\cdot\nu_{ij}\d\sigma(\tilde{x})+O(h^{2}).
\]

Now we can fix the function $s(x)=\kappa(x)\nabla U(x)\cdot\nu_{ij}\nabla\pi(x)$
with respect to $\bar{x}$. We have $s(x)=s(\bar{x})+(x-\bar{x})\nabla s(\bar{x})+O(h^{2})$,
which implies (assuming that $U,\pi\in\mathrm{C}^{2}$ and $\kappa\in\mathrm{C}^{1}$)
that $\int_{\sigma_{ij}}(\pi-S_{ij})\kappa\nabla U\cdot\nu_{ij}=\frac{1}{4}\int_{\sigma_{ij}}\left(s(\bar{x})+(x-\bar{x})\nabla s(\bar{x})\right)\cdot\tilde{x}\d\sigma(\tilde{x})+O(h^{2})=\frac{1}{4}\int_{\sigma_{ij}}s(\bar{x})\cdot\tilde{x}\d\sigma(\tilde{x})+O(h^{2})$.
But the first vanishes, since the interface $\sigma_{ij}$ is symmetric
w.r.t. the mid point $\bar{x}$ and we are integrating along $\tilde{x}$.
Hence, we have $\left|\int_{\sigma_{ij}}\left(\pi-S_{ij}\right)\kappa\nabla U\cdot\bnu_{ij}\right|=O(h^{2}).$

Hence, iterating the above argument twice for $\kappa$ and $\pi$
and exploiting in the first step Theorem \ref{thm:ConsistencyCubicGrid}
we proved the following.
\begin{thm}
Let $d\leq3$. On a polygonal domain $\Omega\subset\R^{d}$ with a
cubic mesh where $\Omega_{i}=x_{i}+[-h/2,h/2]^{d}$, $x_{i}\in h\mathbb{Z}\subset\Omega$,
it holds 
\[
\norm{\mathfrak{E}_{\cT,{\rm FPE}}\left(U;\cdot\right)}_{H_{\cT,\kappa S}^{\ast}}\leq Ch^{2}.
\]
\end{thm}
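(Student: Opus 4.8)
The plan is to run everything through the abstract splitting of Proposition~\ref{prop:conv-O-U-1}, which already writes $\norm{\mathfrak{E}_{\cT,{\rm FPE}}\left(U;\cdot\right)}_{H_{\cT,\kappa S}^{\ast}}^{2}$ as the sum of $\norm{\pi}_{\infty}\norm{\mathfrak{E}_{\cT}\left(U;\cdot\right)}_{H_{\cT,\kappa}^{\ast}}^{2}$ and the face contribution $\sum_{i\sim j}\frac{h_{ij}}{m_{ij}}\kappa_{ij}^{-1}S_{ij}^{-1}\bigl(\int_{\sigma_{ij}}(\pi-S_{ij})\kappa\nabla U\cdot\bnu_{ij}\bigr)^{2}$. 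On the cubic mesh I expect both summands to be $O(h^{4})$, which is exactly the claimed $O(h^{2})$ for the norm; the gain of one power of $h$ over the generic bound of Theorem~\ref{thm:ConsistencyError} will come entirely from the reflection symmetry of a square face $\sigma_{ij}$ about its midpoint $\bar{x}_{ij}=\tfrac12(x_{i}+x_{j})$, which annihilates the leading, linear-in-$(x-\bar x_{ij})$ part of each integrand.

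For the first summand I would upgrade Theorem~\ref{thm:ConsistencyCubicGrid}, stated there for $\kappa\equiv1$, to a general smooth $\kappa$. Split $\int_{\sigma_{ij}}\kappa\nabla U\cdot\bnu_{ij}=\kappa_{ij}\int_{\sigma_{ij}}\nabla U\cdot\bnu_{ij}+\int_{\sigma_{ij}}(\kappa-\kappa_{ij})\nabla U\cdot\bnu_{ij}$; the first term is $O(m_{ij}h^{2})$ by the $\kappa\equiv1$ expansion preceding Theorem~\ref{thm:ConsistencyCubicGrid}. For the second, on a cube $d_{i,ij}=d_{j,ij}=h/2$, so \eqref{eq:kappa-ij} reduces to $\kappa_{ij}=\tfrac12(\bar\kappa_{i}+\bar\kappa_{j})=\kappa(\bar x_{ij})+O(h^{2})$; hence $\kappa(x)-\kappa_{ij}$ is linear in $x-\bar x_{ij}$ up to $O(h^{2})$, its linear part integrates to zero over the symmetric face, and the second term is $O(m_{ij}h^{2})$ as well. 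Inserting this into \eqref{eq:dual-norm-var-cons-err} and using $h_{ij}=h$, $m_{ij}=h^{d-1}$, $\kappa_{ij}\geq\kappa_{*}$ together with the elementary count $\sum_{i\sim j}h_{ij}m_{ij}=O(1)$ gives $\norm{\mathfrak{E}_{\cT}\left(U;\cdot\right)}_{H_{\cT,\kappa}^{\ast}}^{2}\leq Ch^{4}$.

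For the face contribution I would argue exactly as in the computation displayed just before the theorem statement. Using $\nabla S(\xi,\xi)=(\tfrac12,\tfrac12)$ and $S(\xi,\xi)=\xi$ for any $C^{2}$-mean, together with $\pi_{i}-\pi(x)=\nabla\pi\cdot(x_{i}-x)+O(h^{2})$, one obtains $\pi(x)-S_{ij}=\nabla\pi(x)\cdot\tilde x+O(h^{2})$ with $\tilde x=x-\bar x_{ij}$; freezing the remaining smooth factors at $\bar x_{ij}$ then yields $\int_{\sigma_{ij}}(\pi-S_{ij})\kappa\nabla U\cdot\bnu_{ij}=\bigl(\kappa\nabla U\cdot\bnu_{ij}\bigr)(\bar x_{ij})\,\nabla\pi(\bar x_{ij})\cdot\int_{\sigma_{ij}}\tilde x\,\d\sigma(\tilde x)+O(m_{ij}h^{2})$. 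The first term vanishes because $\int_{\sigma_{ij}}\tilde x\,\d\sigma(\tilde x)=0$ by symmetry of the square face about $\bar x_{ij}$, leaving $\bigl|\int_{\sigma_{ij}}(\pi-S_{ij})\kappa\nabla U\cdot\bnu_{ij}\bigr|\leq C\,m_{ij}h^{2}$. Hence $\sum_{i\sim j}\frac{h_{ij}}{m_{ij}}\kappa_{ij}^{-1}S_{ij}^{-1}\bigl(C\,m_{ij}h^{2}\bigr)^{2}\leq C h^{4}\sum_{i\sim j}h_{ij}m_{ij}\leq Ch^{4}$, and combining with the previous paragraph gives $\norm{\mathfrak{E}_{\cT,{\rm FPE}}\left(U;\cdot\right)}_{H_{\cT,\kappa S}^{\ast}}^{2}\leq Ch^{4}$, i.e.\ the assertion.

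The hard part will be the bookkeeping of the cancellations: one has to be sure that after expanding $S_{ij}-\pi$ (resp.\ $\kappa-\kappa_{ij}$) every term that survives to order $h$ in the integrand is a genuinely odd function of $\tilde x$ over the cube face, and that the two successive linearizations — first in $\pi\leftrightarrow S_{ij}$, then in $\kappa\leftrightarrow\kappa_{ij}$ — do not reintroduce an even $O(h)$ contribution. Everything else is a routine Taylor expansion under the regularity $U,\pi\in C^{2}$, $\kappa\in C^{1}$, plus the count $\sum_{i\sim j}h_{ij}m_{ij}=O(1)$ valid on a cubic mesh.
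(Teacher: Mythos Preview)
Your approach is correct and essentially identical to the paper's: both feed Proposition~\ref{prop:conv-O-U-1} with the pointwise face estimate $\bigl|\int_{\sigma_{ij}}(\pi-S_{ij})\kappa\nabla U\cdot\bnu_{ij}\bigr|=O(m_{ij}h^{2})$ obtained by Taylor-expanding $S_{ij}-\pi(x)$ around the face midpoint $\bar x_{ij}$, freezing the smooth factor $\kappa\nabla U\cdot\bnu_{ij}\,\nabla\pi$ there, and killing the surviving linear term via the reflection symmetry of the cubic face; the paper then says ``iterating the above argument twice for $\kappa$ and $\pi$'' to handle general $\kappa$, which is exactly your first paragraph.

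One small slip: with $d_{i,ij}=d_{j,ij}=h/2$ formula~\eqref{eq:kappa-ij} gives the \emph{harmonic} mean $\kappa_{ij}=2\bar\kappa_{i}\bar\kappa_{j}/(\bar\kappa_{i}+\bar\kappa_{j})$, not $\tfrac12(\bar\kappa_{i}+\bar\kappa_{j})$. This does not affect your argument, since $\bar\kappa_{i}=\kappa(\bar x_{ij})-\tfrac{h}{2}\partial_{\nu}\kappa(\bar x_{ij})+O(h^{2})$ and $\bar\kappa_{j}=\kappa(\bar x_{ij})+\tfrac{h}{2}\partial_{\nu}\kappa(\bar x_{ij})+O(h^{2})$, so the harmonic mean is $\kappa(\bar x_{ij})-O(h^{2})$ as well, and the rest of your cancellation goes through unchanged.
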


\section{\label{sec:Interpretation-and-simulation}Numerical simulation and
convergence analysis}

In this section, we provide a numerical convergence analysis of the
flux discretization schemes based on weighted Stolarsky means described in
the previous sections. For the sake of simplicity, we restrict ourselves
to one-dimensional examples, for which already non-trivial results
can be observed.
\begin{example}
\label{exa: example 1}We consider the potential $V\left(x\right)=2\sin{\left(2\pi x\right)}$
and the right hand side $f\left(x\right)=x\left(1-x\right)$ on $x=\left(0,1\right)$.
We assume the  diffusion constant $\kappa=1$ and Dirichlet boundary conditions $u\left(0\right)=0$
and $u\left(1\right)=1$. The Stolarsky mean discretizations are compared
point-wise with a numerically computed reference solution $u_{\text{ref}}$
(and $J_{\text{ref}}$) that was obtained by the shooting method
(using a fourth order Runge--Kutta scheme) in combination with Brent's
root finding algorithm \cite{Brent1971} on a very fine grid with
$136474$ nodes ($h\approx 7.3\times 10^{-6}$).

\begin{figure}[t]
\includegraphics[width=1\textwidth]{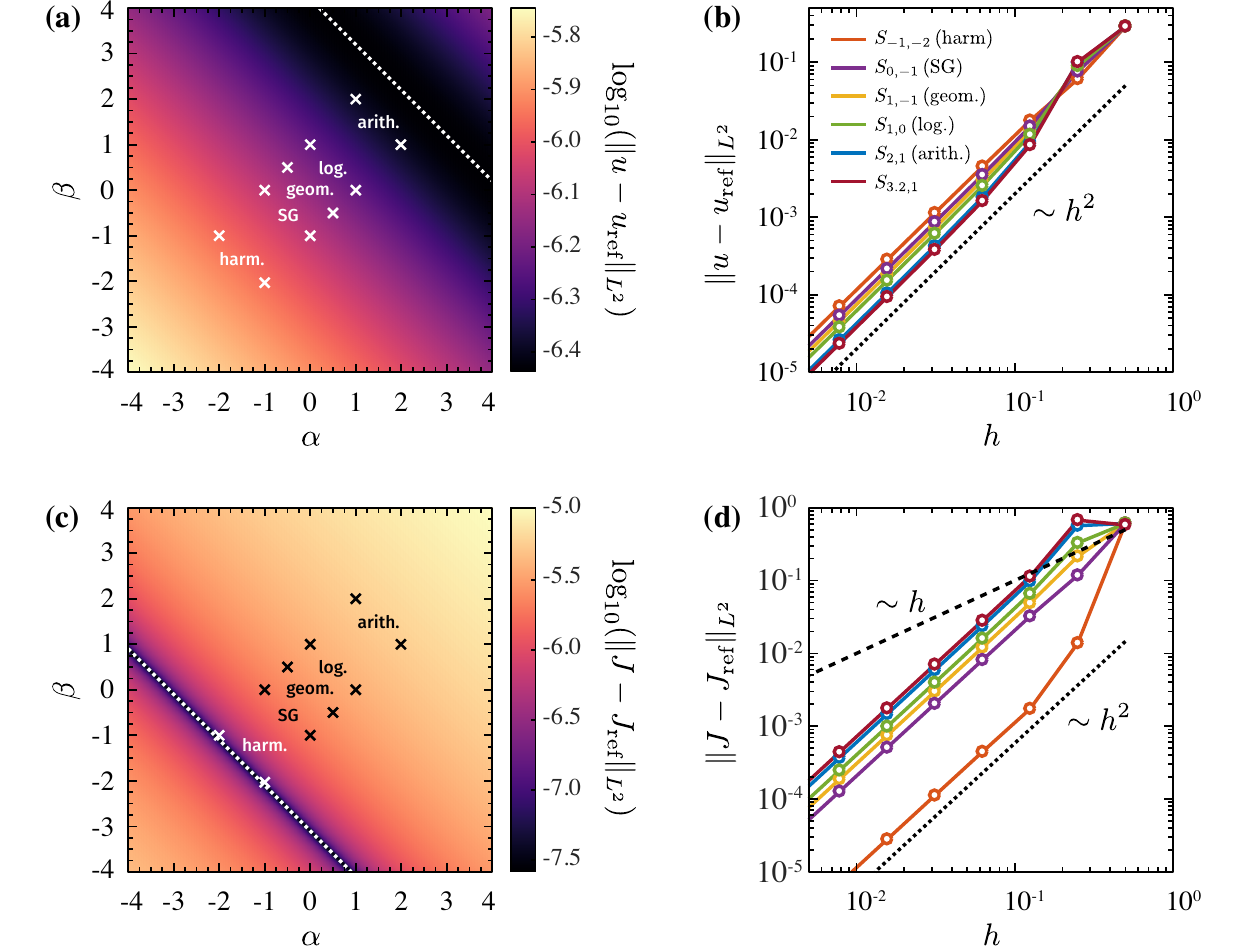}\caption{Numerical results for Example \ref{exa: example 1}. \textbf{(a)}~Discretization
error $\log_{10}(\Vert u-u_{\text{ref}}\Vert_{L_{2}})$ in the $\left(\alpha,\beta\right)$-plane
on an equidistant mesh with $2^{10}+1$ nodes. The error is color-coded.
Several special means (see Tab.\,\ref{tab: stolarsky means}) are highlighted by crosses. Notice the symmetry
$S_{\alpha,\beta}\left(x,y\right)=S_{\beta,\alpha}\left(x,y\right)$.
\textbf{(b)}~Quadratic convergence of the discrete solution to the
exact reference solution $u_{\text{exact}}$ under mesh refinement
in the $L_{2}$-norm. See the inset for a legend and color-coding
of the considered means $S_{\alpha,\beta}$. In the present example,
the best numerical result for $u$ is achieved by $S_{3.2,1}$. \textbf{(c)}~Logarithmic
error of the numerically computed flux density $\log_{10}(\Vert J-J_{\text{ref}}\Vert_{L_{2}})$
in the $\left(\alpha,\beta\right)$-plane on the same mesh as in (a).
\textbf{(d)}~Convergence of the numerically computed flux density
to $J_{\text{ref}}$. In contrast to the convergence of $u$ shown
in (b), here the harmonic average $S_{-1,-2}$ yield the highest accuracy.}
\label{fig: example 1}
\end{figure}
\end{example}

The convergence results are summarized in Fig.~\ref{fig: example 1}.
In Figure~\ref{fig: example 1}\,(a), the logarithmic error $\log_{10}(\Vert u-u_{\text{ref}}\Vert_{L_{2}})$
is shown in the $\left(\alpha,\beta\right)$-plane of the Stolarsky
mean parameters for an equidistant mesh with $2^{10}+1=1025$ nodes.
First, we note that the accuracy for a mean $S_{\alpha,\beta}$ is
indeed practically invariant along $\alpha+\beta=\mathrm{const}$,
which is consistent with our analytical result in Section~\ref{sec:Comparison-of-discretization}.
In this particular example, we observe optimal accuracy at about $\alpha+\beta\approx4.2$.
This coincides with the convergence results under mesh refinement
shown in Figure~\ref{fig: example 1}\,(b), where the fastest convergence
is obtained for the scheme involving the $S_{3.2,1}$-mean. The other
considered schemes, however, show as well a quadratic convergence
behavior with a slightly larger constant. Interestingly, for the same
example, we find that the optimal mean for an accurate approximation
of the flux $J$ is on $\alpha+\beta=-3$, see Figure~\ref{fig: example 1}\,(c).
This is further evidences in Figure~\ref{fig: example 1}\,(d),
where the harmonic mean $S_{-1,-2}$ converges significantly faster
than the other schemes. Obviously, in the present example, the minimal
attainable error for both $u$ and $J$ can not be achieved by the
same discretization scheme.

\begin{example}
\label{exa: example 2}We consider the potential $V\left(x\right)=5\left(x+1\right)x$.
The right hand side function, the diffusion constant and the boundary conditions
are the same as in Example \ref{exa: example 1}. The problem has an exact solution
involving the imaginary error function (which is related to the Dawson
function), that has been obtained using Wolfram Mathematica \cite{Mathematica2017}.

\begin{figure}[t]
\includegraphics[width=1\textwidth]{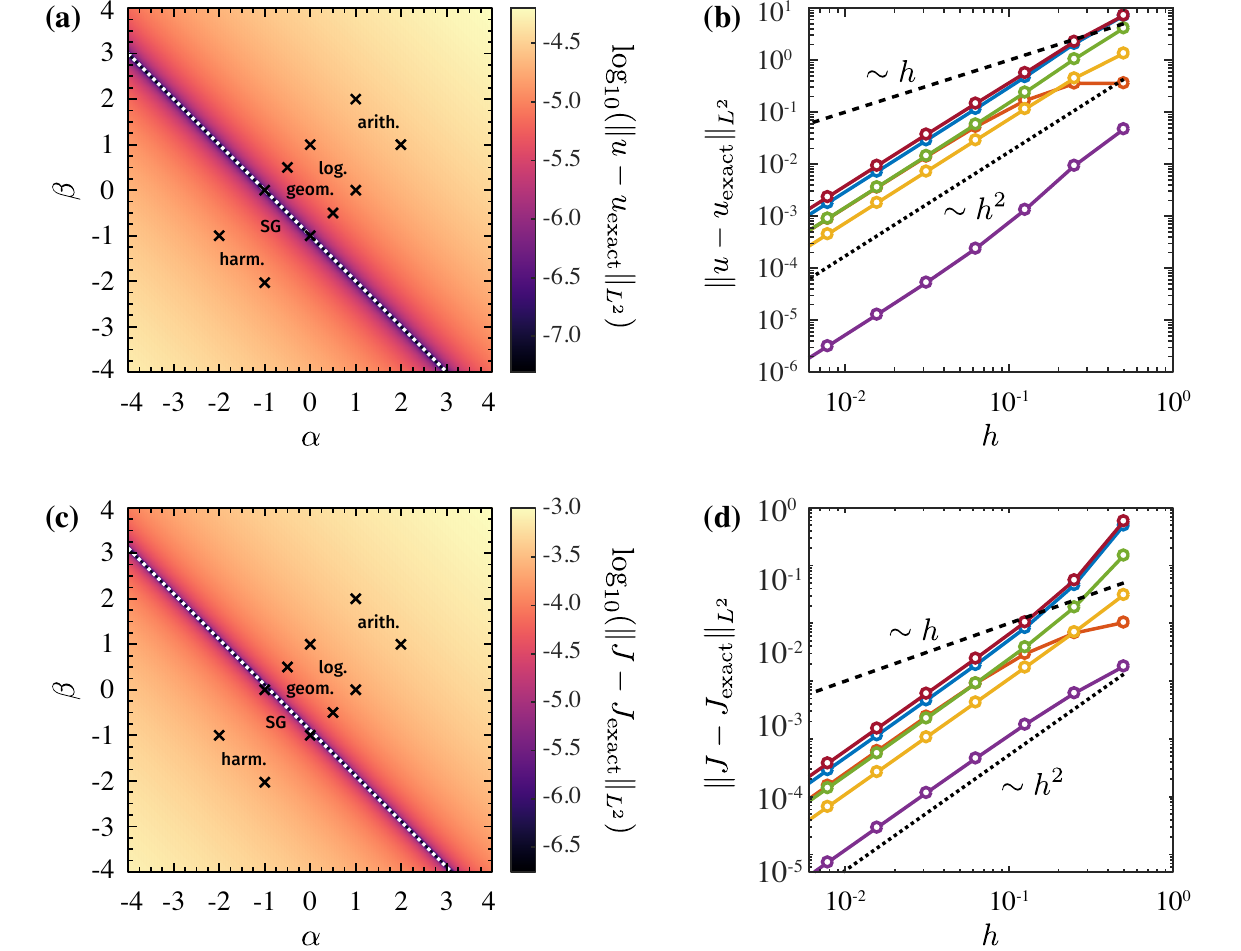}\caption{Discretization errors and convergence behavior of the numerically computed $u$ and $J$ in Example \ref{exa: example 2} using the
Stolarsky mean schemes. The errors in \textbf{(a)} and \textbf{(c)} are color-coded. The coloring of the means in \textbf{(b)} and \textbf{(d)}
is the same as in Figure~\ref{fig: example 1}\,(b). The plots clearly
show a superior performance of the Scharfetter--Gummel scheme, which
corresponds to the Stolarsky mean $S_{0,-1}$ for the approximation
of both the density $u$ and the flux $J$.}

\label{fig: example 2}
\end{figure}

The numerical results are show in Figure~\ref{fig: example 2}. The
discretization errors of both the density $u$ and the flux $J$
shown in Figure~\ref{fig: example 2}\,(a) and (c) exhibit a sharp
minimum on $\alpha+\beta=-1$. This includes the Scharfetter--Gummel
mean $S_{0,-1}$, which converges fastest to the exact reference solutions
for $u$ and $J$, as shown in \ref{fig: example 2}\,(b) and (d).
The SQRA scheme, with geometric mean $S_{\alpha,-\alpha}$, is found
to be second best in the present example.
\end{example}

The numerical results are in line with our previous statements from
Remark \ref{rem:SG-strong-grad}: In the case of strong gradients
$\nabla V$, the Scharfetter--Gummel scheme provides the most accurate
flux discretization, in particular, the SG mean $S_{0,-1}$ is the
only Stolarsky mean that recovers the upwind scheme \eqref{eq:limit-B-SG}.
Away from that drift-dominated regime, the situation is less clear
and other averages $S_{\alpha,\beta}$ can be superior, see for instance
Example \ref{exa: example 1}.

\appendix

\section{Appendix}

\subsection{A General Poincaré Inequality}

We derive a general Poincaré inequality on meshes. The idea behind
the proof seems to go back to Hummel \cite{Hummel1999} and has been
adapted in a series of works e.g. \cite{heida2018convergences,heida2017fractal}.
Let $e_{0}=0$ and $(e_{i})_{i=1,\dots,n}$ be the canonical basis
of $\Rn$. Define: 
\[
D^{d-1}:=\{\nu\in\S^{d-1}\,\,|\,\,\exists m\in\left\{ 1,\cdots,d\right\} :\nu\cdot e_{i}=0\,\,\forall\,\,i\in\left\{ 0,1,\cdots,m-1\right\} \,\,\textnormal{and}\,\,\nu\cdot e_{m}>0\}\,.
\]
Every $\nu\in\S^{d-1}$ satisfies $\nu\cdot e_{i}\neq0$ for at least
one $e_{i}$. Thus, for every $\nu\in\S^{d-1}$ it holds $\nu\in D^{d-1}$
if and only if $-\nu\not\in D^{d-1}$. 

We denote $\Gamma=\bigcup_{\sigma\in\cE_{\Omega}}\sigma$ and say
that $x\in\Gamma$ is a Lipschitz point if $\Gamma$ is a Lipschitz
graph in a neighborhood of $x$. The set of Lipschitz-Points is called
$\Gamma_{L}\subset\Gamma$ and we note that for the $\left(d-1\right)$-dimensional
Hausdorff-measure of $\Gamma\backslash\Gamma_{L}$ it holds $\mathcal{H}^{d-1}\left(\Gamma\backslash\Gamma_{L}\right)=0$.

For $x\in\Gamma_{L}$, we denote $\nu_{x}\in D^{d-1}$ the normal
vector to $\Gamma$ in $x$.. Let 
\[
\mathcal{C}_{0}^{1}(\Omega;\Gamma):=\left\{ u\in C(\Omega\backslash\Gamma)\,\,:\;\,u|_{\partial\Omega}\equiv0\,,\;\forall i\,\exists v_{i}\in C^{1}\left(\overline{\Omega_{i}}\right):\,u|_{\Omega_{i}}=v_{i}\right\} 
\]
and for $u\in\mathcal{C}_{K,0}^{1}(\Omega)$ define in Lipschitz points
$x\in\Gamma_{L}$ 
\[
u_{\pm}(x):=\lim_{h\to0}\left(u\left(x\pm h\nu_{x}\right)\right)\,,~~~\jump u(x):=u_{+}(x)-u_{-}(x)\,.
\]
For two points $x,y\in\Rn$ denote $(x,y)$ the closed straight line
segment connecting $x$ and $y$ and for $\xi\in(x,y)\cap\Gamma_{L}$
denote 
\[
\jump u_{x,y}(\xi):=\lim_{h\to0}\left(u\left(\xi+h(y-x)\right)-u\left(\xi-h(y-x)\right)\right)
\]
the jump of the function $u$ at $\xi$ in direction $(y-x)$, i.e.
$\jump u_{x,y}(\xi)\in\pm\jump u\left(\xi\right)$. We can extend
$\jump u$ to $\Gamma$ by $\jump u\left(x\right)=0$ for $x\in\Gamma\backslash\Gamma_{L}$
and define 
\begin{align*}
\norm u_{H^{1}\left(\Omega;\Gamma\right)} & :=\left(\int_{\Omega\backslash\Gamma}\left|\nabla u\right|^{2}+\int_{\Gamma}\jump u^{2}\right)^{\frac{1}{2}}\,,\\
H_{0}^{1}\left(\Omega;\Gamma\right) & :=\overline{\mathcal{C}_{0}^{1}\left(\Omega;\Gamma\right)}^{\norm{\cdot}_{H^{1}\left(\Omega;\Gamma\right)}}\,.
\end{align*}
Then we find the following result:
\begin{lem}[Semi-discrete Poincaré inequality]
\label{lem:Poincare}Let $\Omega\subset\Rd$ be a bounded domain.
The space $H_{0}^{1}\left(\Omega;\Gamma\right)$ is linear and closed
for every $s\in[0,\frac{1}{2})$ and there exists a positive constant
$C_{s}>0$ such that the following holds: Suppose there exists a constant
$C_{\#}>0$ such that for almost all $\left(x,y\right)\in\Omega^{2}$
it holds $\#\left(\left(x,y\right)\cap\Gamma\right)\leq C_{\#}$..
Then for every $u\in H_{0}^{1}\left(\Omega;\Gamma\right)$ it holds
\begin{equation}
\left\Vert u\right\Vert _{H^{s}(\Omega)}^{2}\leq C_{s}\left(C_{\#}\int_{\Gamma}\jump u^{2}+\left\Vert \nabla u\right\Vert _{L^{2}(\Omega\backslash\Gamma)}^{2}\right)\,.\label{eq:Poincare-H-1-2}
\end{equation}
Furthermore, for every $u\in H^{1}\left(\Omega;\Gamma\right)$ and
every $\boldsymbol{\eta}\in\R^{d}$ it holds
\begin{equation}
\int_{\Omega}\left|u(x)-u(x+\boldsymbol{\eta})\right|^{2}dx\leq\left|\boldsymbol{\eta}\right|\left(C_{\#}\int_{\Gamma}\jump u^{2}+\left\Vert \nabla u\right\Vert _{L^{2}(\Omega\backslash\Gamma)}^{2}\right)\,.\label{eq:Poincare-L-2}
\end{equation}
\end{lem}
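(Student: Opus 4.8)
The plan is to establish both inequalities for $u\in\mathcal{C}_{0}^{1}(\Omega;\Gamma)$ and then pass to the completion. Since $H_{0}^{1}(\Omega;\Gamma)$ is by definition the closure of $\mathcal{C}_{0}^{1}(\Omega;\Gamma)$ under $\|\cdot\|_{H^{1}(\Omega;\Gamma)}$, linearity and completeness are automatic; the substantive content of the first assertion is that this abstract completion injects continuously into $H^{s}(\Omega)$, which is exactly \eqref{eq:Poincare-H-1-2} extended by density. The core estimate is \eqref{eq:Poincare-L-2}, from which \eqref{eq:Poincare-H-1-2} will be deduced by a standard Besov-type computation, so I first concentrate on \eqref{eq:Poincare-L-2}.

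To prove \eqref{eq:Poincare-L-2} for $u\in\mathcal{C}_{0}^{1}(\Omega;\Gamma)$, extended by $0$ to all of $\R^{d}$, fix $\eta\neq0$, put $e:=\eta/|\eta|$ and slice $\R^{d}=e^{\perp}\oplus\R e$. For $\mathcal{H}^{d-1}$-almost every line parallel to $e$ the restriction of $u$ is piecewise $C^{1}$ with jumps occurring only at the finitely many points of intersection with $\Gamma_{L}$ (finite because $\Gamma$ is a finite union of flat pieces), and the one-dimensional fundamental theorem of calculus with jump corrections gives
\[
u(x+\eta)-u(x)=\int_{0}^{|\eta|}\nabla u(x+se)\cdot e\,\d s+\sum_{\xi\in[x,x+\eta]\cap\Gamma_{L}}\jump{u}_{x,x+\eta}(\xi)=:A(x)+B(x).
\]
A short Fubini argument on the pair $(x,\eta)$ turns the hypothesis $\#\bigl((x,y)\cap\Gamma\bigr)\le C_{\#}$ for a.e.\ $(x,y)\in\Omega^{2}$ into $\#\bigl([x,x+\eta]\cap\Gamma\bigr)\le C_{\#}$ for a.e.\ $x$ and a.e.\ $\eta$; since both sides of \eqref{eq:Poincare-L-2} are continuous in $\eta$ this suffices. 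Then $|u(x+\eta)-u(x)|^{2}\le2A(x)^{2}+2B(x)^{2}$ and the two pieces are treated separately.

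For the gradient part, Cauchy--Schwarz along the segment followed by Fubini and translation invariance of Lebesgue measure yields $\int_{\R^{d}}A(x)^{2}\,\d x\le|\eta|^{2}\,\|\nabla u\|_{L^{2}(\Omega\setminus\Gamma)}^{2}$ (and, restricting the segment to where $u$ is supported, also $\le\diam\Omega\cdot|\eta|\,\|\nabla u\|^{2}$); together with the trivial bound $\int_{\Omega}|u(\cdot+\eta)-u|^{2}\le4\|u\|_{L^{2}(\Omega)}^{2}$ this produces the $\|\nabla u\|^{2}$-contribution to the right-hand side of \eqref{eq:Poincare-L-2}. The crucial step is the jump part: Cauchy--Schwarz in the at most $C_{\#}$ summands gives $B(x)^{2}\le C_{\#}\sum_{\xi}\jump{u}(\xi)^{2}$, and writing every admissible base point as $x=\xi-se$ with $s\in[0,|\eta|]$ and invoking the area formula for the orthogonal projection $\Gamma\to e^{\perp}$ (Jacobian $|\bnu(\xi)\cdot e|$) yields the integral-geometric identity
\[
\int_{\R^{d}}\sum_{\xi\in[x,x+\eta]\cap\Gamma_{L}}\jump{u}(\xi)^{2}\,\d x=|\eta|\int_{\Gamma}\jump{u}(\xi)^{2}\,|\bnu(\xi)\cdot e|\,\d\mathcal{H}^{d-1}(\xi)\le|\eta|\int_{\Gamma}\jump{u}^{2}.
\]
Collecting these estimates and restricting the $x$-integration to $\Omega$ proves \eqref{eq:Poincare-L-2}; note that the jump term contributes only a factor $|\eta|$ (not $|\eta|^{2}$), which is precisely what limits the regularity gain below to $s<\tfrac12$.

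Finally, \eqref{eq:Poincare-H-1-2} follows from \eqref{eq:Poincare-L-2}: using the Gagliardo seminorm and the zero extension, $\int_{\Omega}\int_{\Omega}\tfrac{|u(x)-u(y)|^{2}}{|x-y|^{d+2s}}\,\d x\,\d y\le\int_{\R^{d}}|\eta|^{-d-2s}\bigl(\int_{\Omega}|u(x)-u(x+\eta)|^{2}\,\d x\bigr)\,\d\eta$; bounding the inner integral by $|\eta|\,M$ for $|\eta|\le\diam\Omega$ (via \eqref{eq:Poincare-L-2}, with $M:=C_{\#}\int_{\Gamma}\jump{u}^{2}+\|\nabla u\|^{2}$) and by $4\|u\|_{L^{2}(\Omega)}^{2}$ for $|\eta|>\diam\Omega$, both pieces converge exactly because $0\le2s<1$, giving $\|u\|_{H^{s}(\Omega)}^{2}\le C_{s}\bigl(M+\|u\|_{L^{2}(\Omega)}^{2}\bigr)$; since $\|u\|_{L^{2}(\Omega)}^{2}\le\diam\Omega\cdot M$ is the special case $|\eta|=\diam\Omega$ of \eqref{eq:Poincare-L-2} (this also settles $s=0$), one obtains \eqref{eq:Poincare-H-1-2} after adjusting $C_{s}$, and the density argument then realizes $H_{0}^{1}(\Omega;\Gamma)$ as a subspace of $H^{s}(\Omega)$. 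The main obstacle I anticipate is the rigorous justification of the one-dimensional slicing and of the integral-geometric identity for $B(x)$ — in particular the area formula on the rectifiable set $\Gamma$ and the bookkeeping of orientations in $\jump{u}_{x,x+\eta}$ — whereas the passage to $H^{s}$ and the density/closedness argument are routine.
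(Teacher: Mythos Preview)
Your proposal is correct and follows essentially the same route as the paper: the fundamental theorem of calculus along segments with jump corrections, Cauchy--Schwarz on the at most $C_{\#}$ jump terms, the integral-geometric bound $\int_{\R^d}\sum_{\xi}\jump{u}^2(\xi)\,\d x\le|\boldsymbol\eta|\int_\Gamma\jump{u}^2$, and then passage to $H^s$ via the translation characterization. Your treatment is in fact more explicit than the paper's in two places --- you invoke the area formula with Jacobian $|\bnu(\xi)\cdot e|\le1$ where the paper simply says ``the surface elements are bigger than $1$'', and you spell out the split of the $\eta$-integral into $|\eta|\le\diam\Omega$ and $|\eta|>\diam\Omega$ where the paper just writes ``dividing by $|\boldsymbol\eta|$ and integrating over $\boldsymbol\eta\in\R^d$'' --- but the underlying argument is identical.
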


\begin{proof}
In what follows, given $u\in\mathcal{C}_{0}^{1}(\Omega;\Gamma)$,
we write $\widehat{\nabla u}(x):=\nabla u(x)$ if $x\in\Omega\backslash\Gamma$
and $\widehat{\nabla u}(x)=0$ else. For $y\in\Rd$ we denote $\left(x,y\right)=\left\{ x+s\left(y-x\right)\,:\;s\in[0,1]\right\} $.
Using $2ab<a^{2}+b^{2}$, we infer for $u\in\mathcal{C}_{0}^{1}(\Omega;\Gamma)$
and $x,y\in\overline{\Omega}\backslash\Gamma$ such that $\left(x,y\right)\cap\Gamma$
is finite the inequality
\begin{align*}
\left|u(x)-u(y)\right|^{2} & \leq\left(\sum_{\xi\in(x,y)\cap\Gamma}\jump u_{x,y}(\xi)+\int_{0}^{1}\widehat{\nabla u}\left(x+s(y-x)\right)\cdot\left(x-y\right)ds\right)^{2}\\
 & <\left|x-y\right|^{2}\int_{0}^{1}\left|\widehat{\nabla u}\left(x+s(y-x)\right)\right|^{2}ds+\left(\sum_{\xi\in(x,y)\cap\Gamma}\jump u_{x,y}(\xi)\right)^{2}
\end{align*}
Since $\jump u_{x,y}=\jump u$ we compute 
\[
\left(\sum_{\xi\in(x,y)\cap\Gamma}\jump u_{x,y}(\xi)\right)^{2}\leq\#\left(\left(x,y\right)\cap\Gamma\right)\sum_{\xi\in(x,y)\cap\Gamma}\jump u^{2}(\xi)
\]
and obtain 
\begin{align}
\left|u(x)-u(y)\right|^{2} & <\left|x-y\right|^{2}\int_{0}^{1}\left|\widehat{\nabla u}\left(x+s(y-x)\right)\right|^{2}ds\nonumber \\
 & \quad+\#\left(\left(x,y\right)\cap\Gamma\right)\sum_{\xi\in(x,y)\cap\Gamma}\jump u^{2}(\xi)\,.\label{eq:fundamental-estimate}
\end{align}
We fix $\eta>0$ and consider the orthonormal basis $(e_{i})_{i=1,\dots,d}$
of $\Rd$. The determinant of the first fundamental form of $\Gamma$
is bigger than $1$ almost everywhere. Hence we can observe that 
\begin{align*}
\int_{\Omega}\sum_{\xi\in(x,x+\eta e_{1})\cap\Gamma}\jump u^{2}(\xi)\,\d x & =\int_{\R}\left(\int_{\R^{d-1}}\sum_{\xi\in(x,x+\eta e_{1})\cap\Gamma}\jump u^{2}(\xi)\,\d x_{2}\dots\d x_{d}\right)\d x_{1}\\
 & \leq\int_{\R}\int_{\Gamma\cap\left(\left(x_{1},x_{1}+\eta\right)\times\R^{d-1}\right)}\jump u^{2}(x)\,\d\sigma\,\d x_{1}\\
 & \leq\eta\int_{\Gamma}\jump u^{2}(x)\,\d x\,,
\end{align*}
where we used that the surface elements are bigger than $1$. Furthermore,
we have 
\[
\eta^{2}\int_{0}^{1}\left|\widehat{\nabla u}\left(x+s\eta e_{1}\right)\right|^{2}ds=\eta\int_{0}^{\eta}\left|\widehat{\nabla u}\left(x+se_{1}\right)\right|^{2}ds\,.
\]
Replacing $e_{1}$ in the above calculations with any unit vector
$e$, we obtain from integration of \eqref{eq:fundamental-estimate}
with $y=x+\boldsymbol{\eta}$, $\boldsymbol{\eta}=\eta e$, over $\Omega$
that 
\[
\int_{\Omega}\left|u(x)-u(x+\boldsymbol{\eta})\right|^{2}dx\leq\left|\boldsymbol{\eta}\right|\left(C_{\#}\int_{\Gamma}\jump u^{2}+\left\Vert \nabla u\right\Vert _{L^{2}(\Omega\backslash\Gamma)}^{2}\right)\,.
\]
Dividing by $\left|\boldsymbol{\eta}\right|$ and integrating over
$\boldsymbol{\eta}\in\Rd$, we obtain that for every $s\in[0,\frac{1}{2})$
there exists a positive constant $C_{s}>0$ independent from $u$
and $K$ such that 
\begin{equation}
\left\Vert u\right\Vert _{H^{s}(\Omega)}^{2}\leq C_{s}\left(C_{\#}\int_{\Gamma}\jump u^{2}+\left\Vert \nabla u\right\Vert _{L^{2}(\Omega\backslash\Gamma)}^{2}\right)\,.\label{eq:H-12-estimate-finite-K}
\end{equation}
Hence, by approximation, the last two estimates hold for all $u\in H_{0}^{1}\left(\Omega;\Gamma\right)$..
\end{proof}

\subsection{\label{subsec:Proof-of-Theorem-Stolarsky-grad}Physical relevance
of the geometric mean}
\begin{thm}
\label{thm:Stolarsky-symmetry}Let $S_{ij}=S_{\ast}\left(\pi_{i},\pi_{j}\right)$
be a Stolarsky mean and let $\mathsf{\psi^{*}}$ be a symmetric strictly
convex function with $\mathsf{\psi^{*}}(0)=0$. If $\partial_{\pi}\left(S_{ij}a_{ij}\right)=0$
then $S_{ij}=\sqrt{\pi_{i}\pi_{j}}$ and $\mathsf{\psi^{*}}$ is proportional
to $\mathsf{C}^{*}$.
\end{thm}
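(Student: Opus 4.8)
The plan is to turn the hypothesis into a one-variable Cauchy functional equation. Write $\phi:=\partial_{\xi}\psi^{*}$; since $\psi^{*}$ is symmetric, strictly convex and $\psi^{*}(0)=0$, the function $\phi$ is odd, strictly increasing, and $\phi(0)=0$. Abbreviating $s_{i}=u_{i}/\pi_{i}$, $s_{j}=u_{j}/\pi_{j}$ and using the elementary identity $s_{i}-s_{j}=2\sqrt{s_{i}s_{j}}\,\sinh\!\bigl(\tfrac{1}{2}\log(s_{i}/s_{j})\bigr)$, one rewrites
\[
S_{ij}\,a_{ij}(u,\pi)=\frac{S_{*}(\pi_{i},\pi_{j})}{\sqrt{\pi_{i}\pi_{j}}}\;\sqrt{u_{i}u_{j}}\;\frac{2\sinh(t/2)}{\phi(t)},\qquad t=\log\frac{u_{i}}{u_{j}}-\log\frac{\pi_{i}}{\pi_{j}}.
\]
Because $S_{*}$ is positively homogeneous of degree $1$ and symmetric, the first factor equals $G(\rho):=S_{*}(e^{\rho},1)\,e^{-\rho/2}$ with $\rho:=\log(\pi_{i}/\pi_{j})$, and $G$ is even with $G(0)=S_{*}(1,1)=1$. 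Setting $h(t):=2\sinh(t/2)/\phi(t)$ (even and positive for $t\neq0$, extended to $t=0$ by continuity) and $\tau:=\log(u_{i}/u_{j})$, we obtain $S_{ij}a_{ij}=G(\rho)\,\sqrt{u_{i}u_{j}}\,h(\tau-\rho)$. Since this depends on $\pi$ only through $\rho$, the hypothesis $\partial_{\pi}(S_{ij}a_{ij})=0$ (with $u$ held fixed) is equivalent to: for every $\tau$, the map $\rho\mapsto G(\rho)\,h(\tau-\rho)$ is constant.

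Evaluating that constant at $\rho=0$ gives $G(\rho)\,h(\tau-\rho)=h(\tau)$ for all $\tau,\rho$. Writing $x=\tau-\rho$ and $y=\rho$ this reads $h(x+y)=G(y)\,h(x)$; interchanging $x$ and $y$ yields $G(y)h(x)=G(x)h(y)$, hence $G/h\equiv c$ for a constant $c>0$ (positivity of $G$ and of $h$ away from $0$). Then $h(x+y)=c\,h(x)h(y)$, so $M:=c\,h$ solves $M(x+y)=M(x)M(y)$ with $M>0$; as $\phi$ is monotone, $h$ and $M$ are measurable, whence $M(x)=e^{kx}$, and the evenness of $M$ forces $k=0$. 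Therefore $M\equiv1$, i.e. $h$ is the constant $1/c$ and $G\equiv1$.

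To conclude: $G\equiv1$ means $S_{*}(e^{\rho},1)=e^{\rho/2}$ for all $\rho$, i.e. by homogeneity $S_{ij}=S_{*}(\pi_{i},\pi_{j})=\sqrt{\pi_{i}\pi_{j}}$; and $h\equiv1/c$ means $\phi(t)=2c\sinh(t/2)$, so integrating and using $\psi^{*}(0)=0$ gives $\psi^{*}(t)=4c\,(\cosh(t/2)-1)=2c\,\mathsf{C^{*}}(t)$, i.e. $\psi^{*}$ is proportional to $\mathsf{C^{*}}$ (with $c>0$, consistent with strict convexity). The points I expect to need care are (i) fixing the precise meaning of $\partial_{\pi}(S_{ij}a_{ij})=0$ — that the effective mobility coefficient $S_{ij}a_{ij}$, regarded as a function of the state $u$, is independent of the equilibrium $\pi=e^{-V}$ — and using homogeneity of $S_{*}$ to collapse the $\pi$-dependence to the single variable $\rho$; and (ii) the functional-equation step, in particular invoking measurability (from convexity of $\psi^{*}$) to pass from multiplicativity to an exponential, and checking that the removable singularity of $h$ at $t=0$ does no harm — one may run the Cauchy equation on triples of nonzero reals and then extend by continuity (indeed, finiteness of $h(0)$ is itself forced, since $h(0)=+\infty$ together with $G>0$ would contradict the constancy of $\rho\mapsto G(\rho)h(\tau-\rho)$ at $\tau=\rho$).
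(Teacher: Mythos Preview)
Your proof is correct and follows a genuinely different route from the paper. Both arguments begin by factoring $S_{ij}a_{ij}$ as $\sqrt{u_iu_j}$ times a product of the ratio $S_{*}(\pi_i,\pi_j)/\sqrt{\pi_i\pi_j}$ and a factor coming from $\psi^*$, and then argue that $\pi$-independence forces both pieces to be constant. The paper, however, separates cases: if the Stolarsky ratio is already constant it differentiates in $p=\pi_i/\pi_j$ to pin down $\psi^*$, and if not it inserts the explicit formula for $S_{\alpha,\beta}$, exploits the symmetry $p\mapsto 1/(a^2p)$, and derives an algebraic identity in $q=\sqrt{p}$ whose vanishing coefficients force $a=1$, a contradiction. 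Your argument instead collapses the problem to the single functional equation $h(x+y)=G(y)h(x)$, obtains $G=ch$ and multiplicativity of $ch$, and finishes via the measurable Cauchy equation together with evenness. This avoids the explicit Stolarsky parametrisation entirely and uses only homogeneity and symmetry of the mean, so it applies verbatim to any positively $1$-homogeneous symmetric mean, not just Stolarsky means; the paper's proof buys nothing extra but is more hands-on and does not appeal to the Cauchy equation. Your caveats about the meaning of $\partial_\pi(S_{ij}a_{ij})=0$ and the removable singularity of $h$ at $0$ are the right ones and are handled adequately.
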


\begin{proof}[Proof of Theorem \ref{thm:Stolarsky-symmetry}]
The case $S_{ij}=\sqrt{\pi_{i}\pi_{j}}$ and $\mathsf{\psi^{*}}(\xi)=\cosh\xi-1$
was explained in detail in \cite{heida2018convergences}.

In the general case, symmetry of $\mathsf{\psi^{*}}$ in $\xi_{i}-\xi_{j}$
implies $\mathsf{\psi^{*}}\left(\xi_{i}-\xi_{j}\right)=\mathsf{\psi^{*}}\left(\left|\xi_{i}-\xi_{j}\right|\right)$.
We make use of the fact that the original $\mathsf{C^{*}}(\xi)=\cosh\xi-1$
is a bijection on $[0,\infty)$ and suppose that hence $\mathsf{\psi^{*}}\left(\xi_{i}-\xi_{j}\right)=\theta\left(\mathsf{C^{*}}\left(\xi_{i}-\xi_{j}\right)\right)$.
This implies particularly that 
\[
0\leq x\,\partial_{x}\left(\theta\left(\mathsf{C^{*}}(x)\right)\right)=x\,\partial_{\xi}\theta\left(\mathsf{C^{*}}(x)\right)\partial_{x}\mathsf{C^{*}}(x)\,.
\]
Furhtermore, the symmetry of $\mathsf{\psi^{*}}$ implies by the last
inequality that $\partial_{\xi}\theta\left(\mathsf{C^{*}}(x)\right)>0$.
Inserting this information in \eqref{eq:S_ij--general-Psi} and \eqref{eq:S_ij--general-a}
we observe that 
\[
S_{ij}\left(\frac{u_{i}}{\pi_{i}}-\frac{u_{j}}{\pi_{j}}\right)\partial_{\xi}\theta\left(\mathsf{C^{*}}\left(\ln\left(\frac{u_{i}}{\pi_{i}}\right)-\ln\left(\frac{u_{j}}{\pi_{j}}\right)\right)\right)^{-1}\sinh\left(\ln\left(\frac{u_{i}}{\pi_{i}}\right)-\ln\left(\frac{u_{j}}{\pi_{j}}\right)\right)^{-1}
\]
has to be independent from $\pi_{i}$ and $\pi_{j}$. From the above
case $S_{ij}=\sqrt{\pi_{i}\pi_{j}}$, we know that 
\[
\sqrt{\pi_{i}\pi_{j}}\left(\frac{u_{i}}{\pi_{i}}-\frac{u_{j}}{\pi_{j}}\right)\sinh\left(\ln\left(\frac{u_{i}}{\pi_{i}}\right)-\ln\left(\frac{u_{j}}{\pi_{j}}\right)\right)^{-1}
\]
is constant in $\pi_{i}$ and $\pi_{j}$. Hence it remains to show
that 
\[
f\left(\pi_{i},\pi_{j}\right):=S_{ij}\sqrt{\pi_{i}\pi_{j}}^{-1}\partial_{\xi}\psi\left(\frac{u_{i}}{u_{j}}\frac{\pi_{j}}{\pi_{i}}+\frac{u_{j}}{u_{i}}\frac{\pi_{i}}{\pi_{j}}\right)^{-1}
\]
is independent from $\pi_{i}$ and $\pi_{j}$ if and only if $\partial_{\xi}\psi=\mathrm{const}$
and $S_{ij}=\sqrt{\pi_{i}\pi_{j}}$.

Assume first that $S_{ij}\sqrt{\pi_{i}\pi_{j}}^{-1}=\mathrm{const}$. Then
for $p=\frac{\pi_{i}}{\pi_{j}}$ we obtain that 
\[
\partial_{p}\left(\partial_{\xi}\theta\left(\frac{u_{i}}{u_{j}}p^{-1}+\frac{u_{j}}{u_{i}}p\right)^{-1}\right)=0
\]
has to hold. This implies that $\partial_{\xi}\psi=\mathrm{const}$.

If $S_{ij}\sqrt{\pi_{i}\pi_{j}}^{-1}\not=\mathrm{const}$, we use the definition
of the weighted Stolarsky means given in \eqref{eq:Stolarsky-means}
and note that 
\[
S_{ij}:=S\left(\pi_{i},\pi_{j}\right)=\left(\frac{\beta(\pi_{i}^{\alpha}-\pi_{j}^{\alpha})}{\alpha(\pi_{i}^{\beta}-\pi_{j}^{\beta})}\right)^{\frac{1}{\alpha-\beta}}=\pi_{j}\left(\frac{\beta(p^{\alpha}-1)}{\alpha(p^{\beta}-1)}\right)^{\frac{1}{\alpha-\beta}}\,,
\]
where again $p=\frac{\pi_{i}}{\pi_{j}}$. Hence we obtain that
\begin{align*}
f\left(\pi_{i},\pi_{j}\right) & =\tilde{f}(p):=\sqrt{\frac{1}{p}}\left(\frac{\beta(p^{\alpha}-1)}{\alpha(p^{\beta}-1)}\right)^{\frac{1}{\alpha-\beta}}\partial_{\xi}\theta\left(\frac{u_{i}}{u_{j}}p^{-1}+\frac{u_{j}}{u_{i}}p\right)^{-1}\\
 & =\left(\frac{\beta\left(p^{\frac{\alpha}{2}}-p^{-\frac{\alpha}{2}}\right)}{\alpha\left(p^{\frac{\beta}{2}}-p^{-\frac{\beta}{2}}\right)}\right)^{\frac{1}{\alpha-\beta}}\partial_{\xi}\theta\left(\frac{u_{i}}{u_{j}}p^{-1}+\frac{u_{j}}{u_{i}}p\right)^{-1}
\end{align*}
has to be independent of $\pi_{i}$ and $\pi_{j}$. But then, $\tilde{f}$
is independent of $p$. Now, we define $a=\frac{u_{j}}{u_{i}}$ and
observe that 
\[
\tilde{f}\left(\frac{1}{a^{2}p}\right)=\left(\frac{\beta\left(\left(a^{2}p\right)^{-\frac{\alpha}{2}}-\left(a^{2}p\right)^{\frac{\alpha}{2}}\right)}{\alpha\left(\left(a^{2}p\right)^{-\frac{\beta}{2}}-\left(a^{2}p\right)^{\frac{\beta}{2}}\right)}\right)^{\frac{1}{\alpha-\beta}}\partial_{\xi}\theta\left(\frac{u_{i}}{u_{j}}p^{-1}+\frac{u_{j}}{u_{i}}p\right)^{-1}.
\]
We assume for $\alpha\neq\beta$. The case $\alpha=\beta$ can follows
by continuity. For any $p$ it should holds $\tilde{f}\left(\frac{1}{a^{2}p}\right)=\tilde{f}(p)$,
which implies 
\[
\left(\frac{\beta\left(p^{\frac{\alpha}{2}}-p^{-\frac{\alpha}{2}}\right)}{\alpha\left(p^{\frac{\beta}{2}}-p^{-\frac{\beta}{2}}\right)}\right)^{\frac{1}{\alpha-\beta}}=\left(\frac{\beta\left(\left(a^{2}p\right)^{-\frac{\alpha}{2}}-\left(a^{2}p\right)^{\frac{\alpha}{2}}\right)}{\alpha\left(\left(a^{2}p\right)^{-\frac{\beta}{2}}-\left(a^{2}p\right)^{\frac{\beta}{2}}\right)}\right)^{\frac{1}{\alpha-\beta}},
\]
or equivalently, after introducing $q^{2}=p$,
\[
\left(a^{\alpha}-a^{\beta}\right)q^{\alpha+\beta}+\left(a^{\beta}-a^{-\alpha}\right)q^{\beta-\alpha}+\left(a^{-\beta}-a^{\alpha}\right)q^{\alpha-\beta}+\left(a^{-\alpha}-a^{-\beta}\right)q^{-\beta-\alpha}=0.
\]

Since $\alpha\neq\beta$, one of the terms $q^{\pm\alpha\pm\beta}$
grows faster than the other. Hence we conclude that $a^{\alpha}=a^{\pm\beta}$
which means, $a=1$, a contradiction.
\end{proof}

\subsection{\label{sec:Verification-of-eq:Stolarsky-second-deriv}Properties
of the Stolarsky mean}
\begin{lem}
\label{lem:Stolarsky-derivative}For every of the above Stolarsky
means $S_{\ast}(x,y)$ it holds
\[
\partial_{x}S_{\ast}(x,x)=\partial_{y}S_{\ast}\left(x,x\right)=\frac{1}{2}~~\text{and}~~\partial_{x}^{2}S_{\ast}\left(x,x\right)=\partial_{y}^{2}S_{\ast}\left(x,x\right)=-\partial_{xy}^{2}S_{\ast}\left(x,x\right)=-\partial_{yx}^{2}S_{\ast}\left(x,x\right)\,.
\]
\end{lem}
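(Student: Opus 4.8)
The plan is to derive all the identities from three structural properties shared by every (weighted) Stolarsky mean $S_\ast$: it is a \emph{mean}, hence \emph{idempotent}, $S_\ast(t,t)=t$ for all $t>0$; it is \emph{symmetric}, $S_\ast(x,y)=S_\ast(y,x)$; and it is of class $C^2$ (indeed real-analytic) in a neighbourhood of the diagonal $\{x=y\}\subset(0,\infty)^2$. Only the last of these is not immediate from the defining formula \eqref{eq:Stolarsky-means}: by the homogeneity $S_{\alpha,\beta}(x,y)=x\,S_{\alpha,\beta}(1,y/x)$ it reduces to the analyticity near $r=0$ of the one-variable map $r\mapsto S_{\alpha,\beta}(1,\e^{-r})=B_{\alpha,\beta}(r)$, which follows by expanding $1-\e^{-\alpha r}=\alpha r\bigl(1-\tfrac{\alpha r}{2}+\cdots\bigr)$ so that the base $\tfrac{\beta(1-\e^{-\alpha r})}{\alpha(1-\e^{-\beta r})}$ is a ratio of convergent power series equal to $1$ at $r=0$, after which raising to the (real) power $\tfrac{1}{\alpha-\beta}$ of a factor close to $1$ preserves analyticity; this is precisely the regularity underlying \eqref{eq:Stolarsky-second-deriv} and made explicit in Appendix~\ref{sec:Verification-of-eq:Stolarsky-second-deriv}. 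Since the argument below invokes nothing beyond idempotency, symmetry and $C^2$-regularity, it applies verbatim to any smooth symmetric mean, not just the Stolarsky family.

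For the first-order part, I would differentiate the idempotency relation $t\mapsto S_\ast(t,t)=t$ to obtain $\partial_x S_\ast(t,t)+\partial_y S_\ast(t,t)=1$, and differentiate $S_\ast(x,y)=S_\ast(y,x)$ in $x$ to obtain $\partial_x S_\ast(x,y)=\partial_y S_\ast(y,x)$; restricting the latter to the diagonal gives $\partial_x S_\ast(x,x)=\partial_y S_\ast(x,x)$, and combining this with the previous identity yields $\partial_x S_\ast(x,x)=\partial_y S_\ast(x,x)=\tfrac12$.

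For the second-order part, I would differentiate $\partial_x S_\ast(t,t)+\partial_y S_\ast(t,t)=1$ once more in $t$; the chain rule together with Schwarz's theorem ($\partial_{xy}^2 S_\ast=\partial_{yx}^2 S_\ast$, valid since $S_\ast\in C^2$) then gives $\partial_x^2 S_\ast(x,x)+2\,\partial_{xy}^2 S_\ast(x,x)+\partial_y^2 S_\ast(x,x)=0$. Differentiating $\partial_x S_\ast(x,y)=\partial_y S_\ast(y,x)$ once more in $x$ and restricting to the diagonal gives $\partial_x^2 S_\ast(x,x)=\partial_y^2 S_\ast(x,x)$; substituting this into the previous relation yields $2\,\partial_x^2 S_\ast(x,x)+2\,\partial_{xy}^2 S_\ast(x,x)=0$, i.e.\ $\partial_x^2 S_\ast(x,x)=-\partial_{xy}^2 S_\ast(x,x)$. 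Together with $\partial_x^2 S_\ast(x,x)=\partial_y^2 S_\ast(x,x)$ and $\partial_{xy}^2 S_\ast(x,x)=\partial_{yx}^2 S_\ast(x,x)$ this is exactly the claimed chain of equalities.

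The only genuine obstacle is the smoothness-across-the-diagonal statement flagged in the first paragraph; once it is in place, the identities are a routine differentiation of the idempotency and symmetry relations and require no further use of the explicit Stolarsky formula. As a consistency check, feeding in the explicit value $\partial_x^2 S_{\alpha,\beta}(x,x)=\tfrac{\alpha+\beta-3}{12x}$ from \eqref{eq:Stolarsky-second-deriv} recovers $\partial_{xy}^2 S_{\alpha,\beta}(x,x)=-\tfrac{\alpha+\beta-3}{12x}$, as it must.
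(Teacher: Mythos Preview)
Your proof is correct and follows essentially the same route as the paper's: both arguments rely solely on idempotency $S_\ast(t,t)=t$, symmetry $S_\ast(x,y)=S_\ast(y,x)$, and $C^2$-regularity near the diagonal, then differentiate these relations and invoke Schwarz's theorem. Your treatment is in fact slightly more careful, since you explicitly address the analyticity of $S_{\alpha,\beta}$ across the diagonal (which the paper's proof tacitly assumes), and you obtain $\partial_x^2 S_\ast=\partial_y^2 S_\ast$ directly by differentiating the symmetry relation twice rather than via the paper's auxiliary difference equation.
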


\begin{proof}
Since $S_{\ast}(x,x)=x$ and $S_{\ast}$ is symmetric in $x$ and
$y$, we find from differentiating $\partial_{x}S_{\ast}=\partial_{y}S_{\ast}=\frac{1}{2}$.
From the last equality, we find $\partial_{x}S_{\ast}(x,x)-\partial_{y}S_{\ast}(x,x)=0$
as well as $\partial_{x}S_{\ast}(x,x)+\partial_{y}S_{\ast}(x,x)=1$
and differentiation yields 
\begin{align}
\partial_{x}^{2}S_{\ast}\left(x,x\right)-\partial_{y}^{2}S_{\ast}\left(x,x\right)-\partial_{xy}^{2}S_{\ast}\left(x,x\right)+\partial_{yx}^{2}S_{\ast}\left(x,x\right) & =0\,,\label{eq:Stolarsky-help-1-1}\\
\partial_{x}^{2}S_{\ast}\left(x,x\right)+\partial_{y}^{2}S_{\ast}\left(x,x\right)+\partial_{xy}^{2}S_{\ast}\left(x,x\right)+\partial_{yx}^{2}S_{\ast}\left(x,x\right) & =0\,.\label{eq:Stolarsky-help-2-1}
\end{align}
Since $-\partial_{xy}^{2}S_{\ast}\left(x,x\right)+\partial_{yx}^{2}S_{\ast}\left(x,x\right)=0$,
equation \eqref{eq:Stolarsky-help-1-1} yields $\partial_{x}^{2}S_{\ast}\left(x,x\right)=\partial_{y}^{2}S_{\ast}\left(x,x\right)$.
Inserting the last two relations into \eqref{eq:Stolarsky-help-2-1}
yields $\partial_{xy}^{2}S_{\ast}\left(x,x\right)=\partial_{yx}^{2}S_{\ast}\left(x,x\right)=-\partial_{x}^{2}S_{\ast}\left(x,x\right)$.
\end{proof}
\begin{lem}
It holds \eqref{eq:Stolarsky-second-deriv}$\partial_{x}^{2}S_{\alpha,\beta}\left(\pi,\pi\right)=\frac{1}{12\pi}\left(\alpha+\beta-3\right)$.
\end{lem}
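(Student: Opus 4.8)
The plan is to reduce everything to a one-variable Taylor expansion by exploiting the homogeneity of the mean. Since $S_{\alpha,\beta}(x,y)=x\,S_{\alpha,\beta}(1,y/x)$, I set $g(t):=S_{\alpha,\beta}(1,t)$, so that $S_{\alpha,\beta}(x,y)=x\,g(y/x)$. A direct chain-rule computation with $u=y/x$ gives $\partial_x S_{\alpha,\beta}=g(u)-u\,g'(u)$ and then $\partial_x^2 S_{\alpha,\beta}=\frac{u^2}{x}\,g''(u)$; evaluating on the diagonal ($u=1$, $x=\pi$) yields $\partial_x^2 S_{\alpha,\beta}(\pi,\pi)=\frac1\pi\,g''(1)$. (Here I use that $S_{\alpha,\beta}$ is of class $C^2$ near the diagonal $\{x=y>0\}$, which is the standard smoothness of Stolarsky means and is anyway implicit in the statement.) Thus the lemma reduces to the single identity $g''(1)=\tfrac1{12}\left(\alpha+\beta-3\right)$, i.e. to the second-order Taylor coefficient of $g$ at $t=1$.

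Next I compute that expansion. Write $t=1+s$ and use the generalized binomial series: for $\gamma\in\{\alpha,\beta\}$, $1-t^{\gamma}=-\gamma s\bigl(1+\tfrac{\gamma-1}{2}s+\tfrac{(\gamma-1)(\gamma-2)}{6}s^2+O(s^3)\bigr)$. The prefactors $-\gamma s$ cancel in the quotient, so $\frac{\beta(1-t^{\alpha})}{\alpha(1-t^{\beta})}$ equals the ratio of two power series with constant term $1$. Dividing them out gives $\frac{\beta(1-t^{\alpha})}{\alpha(1-t^{\beta})}=1+Cs+Ds^2+O(s^3)$ with $C=A-B$ and $D=A_2-B_2-AB+B^2$, where $A=\tfrac{\alpha-1}{2},\ A_2=\tfrac{(\alpha-1)(\alpha-2)}{6}$ and $B,B_2$ the analogous expressions in $\beta$. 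Using $(\alpha-1)(\alpha-2)-(\beta-1)(\beta-2)=(\alpha-\beta)(\alpha+\beta-3)$ one simplifies to $C=\tfrac{\alpha-\beta}{2}$ and $D=(\alpha-\beta)\,\tfrac{2\alpha-\beta-3}{12}$. Raising to the power $p:=\tfrac1{\alpha-\beta}$ and expanding $(1+w)^p=1+pw+\tfrac{p(p-1)}{2}w^2+\dots$ with $w=Cs+Ds^2$ gives $g(t)=1+pC\,s+\bigl(pD+\tfrac{p(p-1)}{2}C^2\bigr)s^2+O(s^3)$; in particular $g'(1)=pC=\tfrac12$ (consistent with Lemma~\ref{lem:Stolarsky-derivative}) and $g''(1)=2pD+p(p-1)C^2$.

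The proof then finishes with elementary arithmetic: $2pD=\tfrac{2\alpha-\beta-3}{6}$, and since $C^2=\tfrac{(\alpha-\beta)^2}{4}$ and $p-1=\tfrac{1-\alpha+\beta}{\alpha-\beta}$ one gets $p(p-1)C^2=\tfrac{1-\alpha+\beta}{4}$; adding over the common denominator $12$ yields $g''(1)=\tfrac{2(2\alpha-\beta-3)+3(1-\alpha+\beta)}{12}=\tfrac{\alpha+\beta-3}{12}$, hence $\partial_x^2 S_{\alpha,\beta}(\pi,\pi)=\tfrac{\alpha+\beta-3}{12\pi}$. As a consistency check this reproduces $\partial_x^2 S_{0,-1}(x,x)=-\tfrac1{3x}$ and $\partial_x^2 S_{-1,1}(x,x)=-\tfrac1{4x}$.

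The only genuinely delicate point is the degenerate parameter range where $S_{\alpha,\beta}$ is only defined by continuous extension (e.g. $\alpha=\beta$, or $\alpha=0$, or $\beta=0$), since the divisions above are not literally valid there. I would handle this by noting that both $S_{\alpha,\beta}(\pi,\pi)$ and $\partial_x^2 S_{\alpha,\beta}(\pi,\pi)$ depend continuously on $(\alpha,\beta)$ (indeed the mean is jointly analytic near the diagonal with analytic dependence on the parameters), so it suffices to establish the identity on the open dense set $\{\alpha\neq\beta,\ \alpha\beta\neq0\}$ and then pass to the limit; everything else is just bookkeeping of the two power-series expansions.
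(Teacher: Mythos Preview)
Your proof is correct and takes a genuinely different route from the paper's. The paper does not exploit homogeneity; instead it first invokes Lemma~\ref{lem:Stolarsky-derivative} to write $\partial_{x}S_{\alpha,\beta}(x+h,x-h)-\tfrac12=2h\,\partial_{x}^{2}S_{\alpha,\beta}(x,x)+O(h^{2})$, then inserts the explicit closed formula for $\partial_{x}S_{\alpha,\beta}(x,y)$ and expands four separate expressions (powers $(x\pm h)^{\gamma}$, their differences, products and quotients) in~$h$ before reading off the coefficient. Your reduction $S_{\alpha,\beta}(x,y)=x\,g(y/x)$ with $\partial_{x}^{2}S_{\alpha,\beta}(\pi,\pi)=\tfrac{1}{\pi}\,g''(1)$ turns the problem into a single one-variable Taylor expansion, which is shorter and avoids the somewhat heavy bookkeeping of the paper's computation; it also makes the consistency check $g'(1)=\tfrac12$ fall out for free rather than requiring the separate Lemma~\ref{lem:Stolarsky-derivative}. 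The paper's approach, on the other hand, keeps both variables symmetric throughout and never needs the homogeneity identity, which has the minor advantage of being agnostic to that structural property; your final remark about handling the degenerate parameter cases $\alpha=\beta$ or $\alpha\beta=0$ by continuity is appropriate and mirrors what the paper implicitly assumes.
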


\begin{proof}
We know from Lemma \ref{lem:Stolarsky-derivative} that $\partial_{x}S_{\alpha,\beta}\left(x,x\right)=\frac{1}{2}$
and $\partial_{x}^{2}S_{\alpha,\beta}\left(x,x\right)=-\partial_{y}\partial_{x}S_{\alpha,\beta}\left(x,x\right)$.
Hence we find 
\[
\partial_{x}S_{\alpha,\beta}\left(x+h,x-h\right)-\frac{1}{2}=\left(\begin{array}{c}
h\\
-h
\end{array}\right)\left(\begin{array}{c}
\partial_{x}^{2}S_{\alpha,\beta}\left(x,x\right)\\
\partial_{y}\partial_{x}S_{\alpha,\beta}\left(x,x\right)
\end{array}\right)=2h\partial_{x}^{2}S_{\alpha,\beta}\left(x,x\right)\,.
\]
We make use of the explicit form 
\[
\partial_{x}S_{\alpha,\beta}\left(x,y\right)=\left(\frac{\beta}{\alpha}\right)^{\frac{1}{\alpha-\beta}}\frac{\left(x^{\alpha}-y^{\alpha}\right)^{\frac{1}{\alpha-\beta}-1}}{\left(x^{\beta}-y^{\beta}\right)^{\frac{1}{\alpha-\beta}-1}}\frac{\alpha\left(x^{\beta}-y^{\beta}\right)x^{\alpha}-\beta\left(x^{\alpha}-y^{\alpha}\right)x^{\beta}}{\left(\alpha-\beta\right)\,\,x\,\,\left(x^{\beta}-y^{\beta}\right)^{2}}
\]
for $x\not=y$. We insert $x=x+h$ and $y=x-h$ and make use of the
following expansions
\begin{align*}
\left(\left(x+h\right)^{\alpha}-\left(x-h\right)^{\alpha}\right)^{c} & =\left(\alpha hx^{\alpha-1}\right)^{c}\left(2^{c}+O\left(h^{2}\right)\right)\\
\beta\left(\left(x+h\right)^{\alpha}-\left(x-h\right)^{\alpha}\right)\left(x+h\right)^{\beta} & =2\alpha\beta hx^{\alpha+\beta-1}+2\alpha\beta^{2}h^{2}x^{\alpha+\beta-2}\\
 & +\frac{1}{3}\alpha\beta h^{3}\left(\alpha^{2}-3\alpha+3\beta^{2}-3\beta+2\right)+O\left(h^{4}\right)\\
\alpha\left(\left(x+h\right)^{\beta}-\left(x-h\right)^{\beta}\right)\left(x+h\right)^{\alpha} & =2\alpha\beta hx^{\alpha+\beta-1}+2\alpha^{2}\beta h^{2}x^{\alpha+\beta-2}\\
 & +\frac{1}{3}\alpha\beta h^{3}\left(\beta^{2}-3\beta+3\alpha^{2}-3\alpha+2\right)+O\left(h^{4}\right)\\
\left(x+h\right)\,\left(\left(x+h\right)^{\beta}-\left(x-h\right)^{\beta}\right)^{2} & =4\beta^{2}h^{2}x^{2\beta-1}+4\beta^{2}h^{3}x^{2\beta-2}+O\left(h^{4}\right)
\end{align*}

\begin{multline*}
\alpha\left(\left(x+h\right)^{\beta}-\left(x-h\right)^{\beta}\right)\left(x+h\right)^{\alpha}-\beta\left(\left(x+h\right)^{\alpha}-\left(x-h\right)^{\alpha}\right)\left(x+h\right)^{\beta}\\
=2\alpha\beta\left(\alpha-\beta\right)h^{2}x^{\alpha+\beta-2}+\frac{\alpha\beta}{3}h^{3}x^{\alpha+\beta-3}\left(2\alpha^{2}-2\beta^{2}\right)+O\left(h^{4}\right)
\end{multline*}
to obtain 
\begin{align*}
\frac{\beta\left(x^{\alpha}-y^{\alpha}\right)x^{\beta}-\alpha\left(x^{\beta}-y^{\beta}\right)x^{\alpha}}{\left(\alpha-\beta\right)\,\,x\,\,\left(x^{\beta}-y^{\beta}\right)^{2}} & =\frac{\alpha\left(x^{\alpha+\beta-2}+h\frac{1}{3}x^{\alpha+\beta-3}\left(\alpha+\beta\right)+O\left(h^{2}\right)\right)}{2\beta\left(x^{2\beta-1}+hx^{2\beta-2}+O\left(h^{2}\right)\right)}
\end{align*}
and 
\[
\frac{\left(x^{\alpha}-y^{\alpha}\right)^{\frac{1}{\alpha-\beta}-1}}{\left(x^{\beta}-y^{\beta}\right)^{\frac{1}{\alpha-\beta}-1}}\approx\left(\frac{\alpha}{\beta}\right)^{\frac{1}{\alpha-\beta}-1}\left(\frac{x^{\alpha-1}\left(1+O\left(h^{2}\right)\right)}{x^{\beta-1}\left(1+O\left(h^{2}\right)\right)}\right)^{\frac{1}{\alpha-\beta}-1}\,.
\]
Together with 
\begin{align*}
\frac{a+bh}{c+dh} & =\frac{a}{c}+\frac{bc-ad}{c^{2}}h+O\left(h^{2}\right)\\
\left(\frac{1+ah^{2}}{1+bh^{2}}\right)^{c} & =1+ch^{2}(a-b)+O\left(h^{4}\right)
\end{align*}
we find 
\begin{align*}
\partial_{x}S_{\alpha,\beta}\left(x+h,x-h\right) & =\left(\frac{\left(1+O\left(h^{2}\right)\right)}{\left(1+O\left(h^{2}\right)\right)}\right)^{\frac{1}{\alpha-\beta}-1}\left(\frac{\left(1+h\frac{1}{3}x^{-1}\left(\alpha+\beta\right)+O\left(h^{2}\right)\right)}{2\left(1+hx^{-1}+O\left(h^{2}\right)\right)}\right)\\
 & =\left(\frac{1}{2}+\frac{\frac{2}{3}\left(\alpha+\beta\right)-2}{4\,x}\,h\right)+O\left(h^{2}\right)
\end{align*}
and hence \eqref{eq:Stolarsky-second-deriv}.
\end{proof}

\textcolor{red}{}%

\subsection{Approximation of potentials to get the SQRA mean\label{subsec:Approximation-of-potential}}

The aim of this section is to provide a class of potentials which
are easy to handle and which generate the SQRA-mean $S_{-1,1}(\pi_{0},\pi_{h})$
by $\pi_{\mathrm{mean}}=\left(\frac{1}{h}\int_{0}^{h}\pi^{-1}\right)^{-1}$.
Clearly, choosing the constant potential $V(x):=V_{c}:=-\log S_{-1,1}(\pi_{0},\pi_{h})$
we obtain right mean. Although this works for any means, this has
two drawbacks
\begin{enumerate}
\item The potential jumps and hence the gradient is somewhere infinite,
which means that at these points the force on the particles is infinitely
high which is not physical.
\item Approximating a general function by piecewise constants, on each interval
the accuracy is only of order $h$. However, approximating a function
by affine interpolation the accuracy is of order $h^{2}$ on each
interval (see below for the calculation).
\end{enumerate}
So we want to get a potential which may be used as a good approximation
(i.e. approximating of order $h^{2}$), is physical (i.e. continuous)
and generates the SQRA-mean. Note, that most considerations below
also work for other Stolarsky means. For simplicity we focus on the
SQRA mean $S_{-1,1}$.

\subsubsection{Approximation order for linear approximation}

Let us first realize that a linear interpolation provides an approximation
of order $h^{2}$. Let $V:[0,h]\rightarrow\R$ be a general $C^{2}$-potential.
We define with $V(0)=V_{0}$ and $V(h)=V_{h}$
\[
\tilde{V}(x)=V_{0}+\frac{V_{h}-V_{0}}{h}x.
\]
 Then one easily checks that 
\[
V(x)=V_{0}+\partial_{x}V(0)x+\frac{1}{2}\partial_{x}^{2}V(0)x^{2}+O(h^{3})
\]
and hence, 
\[
V(x)-\tilde{V}(x)=\left(\partial_{x}V(0)-\frac{V_{h}-V_{0}}{h}\right)x+\frac{1}{2}\partial_{x}^{2}V(0)x^{2}+O(h^{3}).
\]
 Clearly, we also have
\[
V_{h}=V_{0}+\partial_{x}V(0)h+\frac{1}{2}\partial_{x}^{2}V(0)h^{2}+O(h^{3})
\]
 which yields
\[
V(x)-\tilde{V}(x)=-\frac{1}{2}\partial_{x}^{2}V(0)hx+\frac{1}{2}\partial_{x}^{2}V(0)x^{2}+O(h^{3})=\frac{1}{2}\partial_{x}^{2}V(0)(x-h)x+O(h^{3})=O(h^{2}).
\]

\subsubsection{Definition of potentials $\hat{V}$ which generate the SQRA mean}

We consider a piecewise linear potential of the form
\[
\hat{V}(x)=\begin{cases}
\frac{V_{c}-V_{0}}{x_{1}}x+V_{0} & ,x\in[0,x_{1}]\\
V_{c} & ,x\in[x_{1},x_{2}]\\
\frac{V_{h}-V_{c}}{h-x_{2}}(x-x_{2})+V_{c} & ,x\in[x_{2},h]
\end{cases}.
\]
where $x_{1},x_{2}\in[0,h]$ are firstly arbitrary and $V_{c}=-\log S_{-1,1}(\pi_{0},\pi_{h})=\tfrac{1}{2}(V_{h}+V_{0})$
. The potential is clearly continuous. Then 
\[
\frac{1}{h}\int_{0}^{h}\e^{\hat{V(}x)}\d x=\frac{x_{1}}{h}\frac{\e^{V_{c}}-\e^{V_{0}}}{V_{c}-V_{0}}+\frac{x_{2}-x_{1}}{h}\e^{V_{c}}+\frac{h-x_{2}}{h}\frac{\e^{V_{h}}-\e^{V_{c}}}{V_{h}-V_{c}}.
\]
Introducing the ratios $\alpha=\frac{x_{1}}{h}$ and $\beta=\frac{h-x_{2}}{h}$
(which are in $[0,1/2]$) , we want to solve $\frac{1}{h}\int_{0}^{h}\e^{\hat{V}(x)}\d x=\e^{\tfrac{1}{2}(V_{h}+V_{0})}$.
Indeed, introducing the difference of the potentials
$\bar{V}=V_{h}-V_{0}$, we obtain
\[
\lambda=\frac{\alpha}{\beta}=\frac{\e^{\bar{V}/2}-\bar{V}/2-1}{\e^{-\bar{V}/2}+\bar{V}/2-1}\approx1+\frac{1}{3}\bar{V}+\frac{1}{18}\bar{V}^{2}.
\]
Hence, any value $\alpha,\beta$ satisfying this ratio generates a
potential with the SQRA-mean.

\subsubsection{Proof that the potential approximates an arbitrary potential of order
$h^{2}$}

Since the linear potentials approximates a general potential of order
$h^{2}$ it suffices to approximate the linear potential $\tilde{V}$
by $\hat{V}$. We show that there are $\alpha,\beta$ satisfying $\frac{\alpha}{\beta}=\lambda$,
such that $\|\hat{V}-\tilde{V}\|_{C([x_{i},x_{i+1}])}=O(h^{2})$.
The difference of $\hat{V}$ and $\tilde{V}$ is the largest at $x=x_{1}$
or $x=x_{2}$. We estimate both differences. We have 
\[
\tilde{V}(x_{1})=V_{0}+\frac{V_{h}-V_{0}}{h}x_{1}=V_{0}+\alpha\bar{V},\ \ \tilde{V}(x_{2})=V_{0}+\frac{V_{h}-V_{0}}{h}x_{2}=V_{0}+(1-\beta)\bar{V}.
\]
Hence we have to estimate 
\[
\Delta_{1}:=|V_{0}-V_{c}+\alpha\bar{V}|,\ \ \Delta_{2}:=|V_{0}-V_{c}+(1-\beta)\bar{V}|.
\]
In the case of SQRA, one possible choice for $\alpha,\beta$ is given by $\alpha+\beta=1$.
Then $\Delta_{1}=\Delta_{2}=|V_{0}-V_{c}+\alpha\bar{V}|=|V_{0}-V_{c}+\frac{\lambda}{1+\lambda}\bar{V}|=\frac{1}{1+\lambda}|(1+\lambda)(V_{0}-V_{c})+\lambda\bar{V}|$.
We have $V_{0}-V_{c}=-\bar{V}/2$, and hence 
\[
\Delta_{1}=\Delta_{2}=\frac{1}{1+\lambda}\frac{\bar{V}}{2}|\lambda-1|.
\]
One can check that $\lambda\approx1+\bar{V}/3$ and hence, $\Delta_{1}+\Delta_{2}\approx\frac{V^{2}}{6}\approx O(h^{2})$.

\newcommand{\etalchar}[1]{$^{#1}$}

\end{document}